\documentclass[11pt]{amsart}
\usepackage[T1]{fontenc}
\usepackage[utf8]{inputenc}
\usepackage{amsmath,amssymb,amsthm,mathtools,mathrsfs}
\usepackage{geometry}
\usepackage{microtype}
\usepackage{hyperref}
\usepackage{enumitem}
\numberwithin{equation}{section}
\hypersetup{colorlinks=true,linkcolor=blue,citecolor=blue,urlcolor=blue}

\newtheorem{theorem}{Theorem}[section]
\newtheorem{proposition}[theorem]{Proposition}
\newtheorem{lemma}[theorem]{Lemma}
\newtheorem{corollary}[theorem]{Corollary}
\newtheorem{definition}[theorem]{Definition}
\newtheorem{remark}{Remark}
\newtheorem{example}{Example}

\newcommand{\CC}{\mathbb C}
\newcommand{\HH}{\mathbb H}
\newcommand{\QQ}{\mathbb Q}
\newcommand{\ZZ}{\mathbb Z}

\newcommand{\OO}{\mathcal O}

\newcommand{\RRm}{\mathcal R}

\newcommand{\FF}{\mathbb F}
\newcommand{\qbinom}[2]{\genfrac{[}{]}{0pt}{}{#1}{#2}_{q}}

\title{Beyond Mock Modularity: Elliptic Corrections for Higher Dyson Ranks}
\author{Claudia Alfes, Ken Ono, and Ashvin Swaminathan}

\thanks{The research of the first author is funded by the Deutsche Forschungsgemeinschaft (DFG, German Research Foundation) -- SFB-TRR 358/1 2023 -- 491392403.}

\address{Faculty of Mathematics, Bielefeld University,
Postfach 100131,
33501 Bielefeld,
Germany}
\email{alfes@math.uni-bielefeld.de}

\address{Axiom Math, 124 University Avenue, Palo Alto, CA 94301, USA}
\email{ken@axiommath.ai}
\email{ashvin@axiommath.ai}

\date{}
\subjclass[2020]{Primary 11P82; Secondary 11F50, 11F27, 33D15}
\keywords{Dyson ranks, integer partitions, mock modular forms, Jacobi forms, Appell--Lerch sums, theta decompositions}

\begin{document}
\begin{abstract}
When $m = 1$, the Dyson rank generating function is a classical bridge between
partition theory, Ramanujan's mock theta functions, and the theory of harmonic
Maass forms and nonholomorphic Jacobi forms. The rank is a statistic on
partitions, and the higher Dyson systems, for $m \geq 2$, are a natural multivariable refinement of it, combining $m$ graded
rank contributions. Unlike the classical case, these higher systems are not
expected to fit the mock-modular framework, which raises the question of what
analytic structure governs them. We show that their root-of-unity
specializations carry a hidden elliptic structure. A finite $q$-difference
recurrence produces an explicit polynomial obstruction to the expected index
$m$ elliptic transformation law, and because the obstruction is finite, its
partial fractions canonically determine finitely many Appell--Lerch correction
terms that remove it. The corrected functions satisfy a twisted index $m$
elliptic law; a natural translation removes the twist, and their holomorphic
finite parts admit finite theta decompositions. Thus, the natural analogue of Dyson's
mock-modular phenomenon at higher $m$ is not mock modularity but a finite theta
decomposition governed by an index $m$ elliptic transformation law. These
results grew out of a human--AI collaboration, and the key new formulas were
formalized and machine-verified in Lean/Mathlib by AxiomProver.
\end{abstract}

\maketitle

\section{Introduction}
One of Ramanujan's most influential discoveries is his list of mock theta functions, described in his final letter to Hardy. Among them, the third order mock theta function
\[
f(q):=\sum_{n\geq 0}\frac{q^{n^2}}{(1+q)^2(1+q^2)^2\cdots (1+q^n)^2}
\]
is a central example. Ramanujan observed that such functions exhibit asymptotic behavior near roots of unity reminiscent of modular forms, even though they do not themselves satisfy the classical modular transformation laws. Zwegers' work \cite{ZwegersThesis} later explained this phenomenon by showing that mock theta functions admit nonholomorphic completions which transform as harmonic Maass forms. This discovery placed Ramanujan's examples in a broad analytic framework connecting $q$-series, modular forms, harmonic Maass forms, and Jacobi-type objects (see \cite{BFOR,BruinierFunke,OnoVisions,ZagierBourbaki} for detailed accounts).

The same function $f(q)$ also appears in a second story of Ramanujan, namely the theory of integer partitions. Let $p(n)$ denote the number of partitions of $n$. Ramanujan discovered the congruences
\[
p(5n+4)\equiv 0\pmod 5,\qquad
p(7n+5)\equiv 0\pmod 7,\qquad
p(11n+6)\equiv 0\pmod {11}.
\]
Dyson introduced the rank of a partition \cite{Dyson}, defined as the largest part minus the number of parts, as a statistic intended to explain the congruences modulo $5$ and $7$; his conjectures were subsequently established by Atkin and Swinnerton-Dyer \cite{AtkinSwinnertonDyer}. If $N(r,n)$ denotes the number of partitions of $n$ with rank $r$, then Dyson's rank generating function is
\[
\mathcal{R}_1(z;q)=\sum_{\lambda}z^{\operatorname{rank}(\lambda)}q^{|\lambda|}
       =\sum_{n\geq 0}\frac{q^{n^2}}{(zq,z^{-1}q;q)_n},
\]
where
\begin{equation*}
(a;q)_n:=(1-a)(1-aq)\cdots (1-aq^{n-1}) \qquad {\text {\rm and}} \qquad
(a,b;q)_n=(a;q)_n (b;q)_n.
\end{equation*}
We also use the standard infinite product notation
\[
(a;q)_\infty := \prod_{n\geq 0}(1-aq^n),
\qquad
(a_1,\ldots,a_r;q)_\infty := \prod_{j=1}^r (a_j;q)_\infty,
\]
whenever the products converge.

The specialization $\mathcal{R}_1(-1;q)$ is Ramanujan's function $f(q)$. More generally, the nontrivial root-of-unity specializations of $\mathcal{R}_1(z;q)$ are mock modular forms after completion \cite{BringmannOnoInvent,BringmannOno}, and their behavior at roots of unity is further governed by quantum modular forms \cite{FolsomOnoRhoades}. Thus Dyson's rank is a basic bridge between the combinatorics of partitions and the analytic theory of mock modular and harmonic Maass forms.

This paper studies the corresponding analytic problem for the higher Dyson series
\begin{equation}\label{eq:intro-Rm}
\mathcal{R}_m(z;q):=\sum_{n\geq 0}
\frac{q^{n^2}}{\prod_{j=1}^m(z^jq,z^{-j}q;q)_n}.
\end{equation}
The case $m=1$ is Dyson's rank generating function. For $m\geq 2$, the
functions $\mathcal{R}_m(z;q)$ arise from a natural combinatorial refinement
of Dyson's rank. Just as the $m=1$ series records a single rank statistic on
partitions, $\mathcal{R}_m(z;q)$ is the generating function for
\emph{uncoupled $m$-Dyson symbols} $(n;\alpha_1,\beta_1,\dots,\alpha_m,\beta_m)$,
where $n$ is a nonnegative integer and $\alpha_1,\beta_1,\dots,\alpha_m,\beta_m$
are partitions, each of largest part at most $n$. Such a symbol is weighted by
its size and by a \emph{full rank}
\[
\sum_{j=1}^m j\bigl(\ell(\alpha_j)-\ell(\beta_j)\bigr)
\]
that combines the $m$ individual ranks with weights $1,2,\dots,m$. This is the
statistic obtained when the single pair of partitions underlying the symbol
form of Dyson's rank is replaced by $m$ independent pairs graded by $j$, so
that at $m=1$ one recovers Dyson's rank generating function and the
$z\mapsto z^{-1}$ symmetry of $\mathcal{R}_m$ reflects the involution
exchanging the two partitions in each pair. In this sense the higher Dyson
series are the most direct multivariable generalization of the rank, and we
defer their precise definition and the resulting root-of-unity collapse to
Appendix~\ref{app:combinatorics}.

For $m\geq 2$, we
show that the higher Dyson system produces a finite and explicit
elliptic obstruction. This obstruction is governed by a polynomial
\(R_{d,x}(u;q)\) of degree at most \(d-2\), whose partial fractions determine
canonical Appell--Lerch correction terms. After these corrections are made,
the resulting meromorphic functions satisfy a twisted index \(m\) elliptic
transformation law, and their remaining local polar parts can be removed to
give a finite theta decomposition.
Thus, the main contribution is the explicit passage from the higher
Dyson recurrence to a canonically corrected meromorphic function satisfying a
twisted index \(m\) elliptic transformation law, with completely finite
correction data determined by the recurrence itself.

To make this precise, we begin by fixing important notation.
In analytic statements, we write
\[
\mathbb H := \{\tau\in\mathbb C : \operatorname{Im}(\tau)>0\},
\qquad q:=e^{2\pi i\tau}.
\]
Throughout, we assume that $m\geq 2$ and we set
\[
d:=2m+1.
\]
If \(\zeta_d\) is a primitive \(d\)-th root of unity, then the product in
\eqref{eq:intro-Rm} collapses to
$$
\RRm_m(\zeta_d;q)=
\sum_{n\geq0}q^{n^2}\frac{(q;q)_n}{(q^d;q^d)_n}.
$$
The normalized specialization
\begin{equation}\label{eq:intro-normalized-root}
\frac{(q^d;q^d)_\infty}{(q;q)_\infty}\RRm_m(\zeta_d;q)
\end{equation}
is the \(x=1\), \(j=0\) member of the one-parameter deformation
\begin{equation}\label{eq:intro-Xj}
X_j^{(m)}(x;q):=
\sum_{n\geq0}q^{n^2+jn}
\frac{(x^dq^{d(n+1)};q^d)_\infty}{(xq^{n+1};q)_\infty}
\qquad (j\in\ZZ).
\end{equation}
Equivalently, if we define
\[
G_d(y;q):=\frac{(y^dq^d;q^d)_\infty}{(yq;q)_\infty},
\]
then we have
 \[ X_j^{(m)}(x;q)=\sum_{n\geq0}q^{n^2+jn}G_d(xq^n;q).
 \]
We assemble these functions together with the
generating function
\begin{equation}\label{eq:intro-F}
\mathcal F_{d,x}(t;q):=\sum_{j\geq0}X_j^{(m)}(x;q)t^j,
\end{equation}
which we then rescale in the elliptic variable, with $u=e^{2\pi i w}$, as
\begin{equation}\label{eq:intro-G}
\mathcal G_{d,x}(u;q):=x^{-2}\mathcal F_{d,x}(xu;q).
\end{equation}
We normalize by the product
\begin{equation}\label{eq:intro-P}
P_d(u;q):=
 u^{-m-1}J(-u;q)^{d+1}\frac{(q/u;q)_\infty}{(q^d/u^d;q^d)_\infty},
 \end{equation}
 where we have the Jacobi triple product identity
\[
J(u;q):=(u;q)_\infty(q/u;q)_\infty(q;q)_\infty
=
\sum_{n\in\ZZ}(-1)^n u^n q^{n(n-1)/2}.
\]
 Namely, we let
\begin{equation}\label{eq:intro-Phi}
\Phi_{d,x}(\tau,w):=P_d(u;q)\mathcal G_{d,x}(u;q),
\end{equation}
which is nearly the function we seek. However, like mock modular forms and Dyson's classical $\RRm_1(\zeta_d;q)$, a correction is required. This is directly analogous to the canonical decomposition of a meromorphic Jacobi form into a finite part and Appell--Lerch corrections \cite{DabholkarMurthyZagier}.
In other words, \(\Phi_{d,x}\) is not yet the final object, but it is the main starting point for the Jacobi-theoretic correction.

The next few definitions precisely describe how \(\Phi_{d,x}\) fails to satisfy a twisted index \(m\) elliptic transformation law. Specifically, we need a few more steps before we can introduce the explicit correction terms that remove this failure.
We define
\[
C_x:=1+x+\cdots+x^{d-1},
\]
and we let \(R_{d,x}(u;q)\in q^{-1}\CC[[q]][u]\) be the unique polynomial
of degree \(\leq d-2\) for which
\begin{equation}\label{eq:intro-R-defining-equation}
\mathcal G_{d,x}(qu;q)
=
x^2q\frac{u^d-1}{u^{d-3}(u-1)}\mathcal G_{d,x}(u;q)
+
\frac{q}{u^{d-3}}R_{d,x}(u;q)
-
C_xq\frac{u^2}{1-xu}G_d(x;q).
\end{equation}
This existence and uniqueness are proved in Section~\ref{sec:corrected-lift}.
We then define
\[
\varepsilon_x:=
\begin{cases}
1,&x^d\neq1,\\
0,&x^d=1,
\end{cases}
\]
and the polynomial
\[
Q_{d,x}(u;q):=
\begin{cases}
(1-xu)(1-u)R_{d,x}(u;q)-C_xu^{d-1}(1-u)G_d(x;q),&x^d\neq1,\\[3pt]
(1-u)R_{d,1}(u;q)-du^{d-1}G_d(1;q),&x=1,\\[3pt]
(1-u)R_{d,x}(u;q),&x\neq1,\ x^d=1.
\end{cases}
\]
The coefficients \(D_{r,x}(q)\) \((0\leq r\leq d-1)\) are characterized as follows. When
\(x^d\neq1\), the term \(E_x(q)\) is characterized by
\begin{equation}\label{eq:intro-partial-fraction}
\frac{Q_{d,x}(u;q)}{u(1-u^d)(1-xu)^{\varepsilon_x}}
=
-\frac{X_0^{(m)}(x;q)}{u}
+
\sum_{r=0}^{d-1}\frac{D_{r,x}(q)}{1-\omega^ru}
+
\varepsilon_x\frac{E_x(q)}{1-xu},
\qquad
\omega:=e^{2\pi i/d}.
\end{equation}
If \(x^d=1\), then we let \(E_x(q):=0\).

The partial fraction decomposition (\ref{eq:intro-partial-fraction}) separates the defect into a constant
term, \(d\) torsion-pole terms, and, when \(x^d\neq1\), one additional pole
term at \(u=x^{-1}\). We cancel these pieces by introducing
correction kernels. The kernel \(\mathcal B_d^{[x]}\) cancels the constant
part of the defect, while \(\mathcal A_{d,r}^{[x]}\) and
\(\mathcal A_{d,\mathrm{ext}}^{[x]}\) are Appell--Lerch-type kernels whose
linear denominators record the corresponding pole orbits. Their shift
identities under \(w\mapsto w+\tau\) are designed to reproduce exactly the
rational terms appearing in the elliptic transformation defect of
\(\Phi_{d,x}\).

We now make this precise.
For \(u=e^{2\pi i w}\), define
\begin{equation}\label{eq:intro-B-kernel}
\mathcal B_d^{[x]}(\tau,w)
:=
\sum_{n\geq0}x^{-2n-2}q^{mn^2}u^{2mn}P_d(q^nu;q),
\end{equation}
\begin{equation}\label{eq:intro-Ar-kernel}
\mathcal A_{d,r}^{[x]}(\tau,w)
:=
\sum_{n\geq0}x^{-2n-2}q^{mn^2+n}u^{2mn+1}
\frac{P_d(q^nu;q)}{1-\omega^rq^nu}
\qquad(0\leq r\leq d-1),
\end{equation}
and
\begin{equation}\label{eq:intro-Aext-kernel}
\mathcal A_{d,\mathrm{ext}}^{[x]}(\tau,w)
:=
\sum_{n\geq0}x^{-2n-2}q^{mn^2+n}u^{2mn+1}
\frac{P_d(q^nu;q)}{1-xq^nu}.
\end{equation}
The {\it elliptically corrected Dyson function} we obtain is
\begin{equation}\label{eq:intro-H-definition}
\mathcal H_{d,x}(\tau,w)
:=
\Phi_{d,x}(\tau,w)
-
X_0^{(m)}(x;q)\mathcal B_d^{[x]}(\tau,w)
+
\sum_{r=0}^{d-1}D_{r,x}(q)\mathcal A_{d,r}^{[x]}(\tau,w)
+
\varepsilon_xE_x(q)\mathcal A_{d,\mathrm{ext}}^{[x]}(\tau,w).
\end{equation}
When \(x^d=1\), the last term is omitted because \(\varepsilon_x=0\).

\begin{remark}
As we shall see in Section~\ref{sec:3}, the higher Dyson \(q\)-difference equation naturally leads to the elliptic side
of Jacobi theory.
After multiplication by the elementary product normalization \(P_d\), the homogeneous part of the recurrence acquires the standard index \(m\) elliptic multiplier
 \(q^{-m}u^{-2m}\), while the same
normalization introduces the denominator \(1-u^d\). Thus, the higher
Dyson system naturally produces a \(d\)-torsion polar divisor. The theta
quotient
\[
\eta(\tau)^{2-4m}
\frac{\vartheta_1(\tau,z)^{4m+1}}{\vartheta_1(d\tau,dz)}
\]
is the corresponding meromorphic Jacobi form carrier (see Proposition~\ref{prop:jacobi-carrier}),
where \(\eta\) is Dedekind's eta-function and \(\vartheta_1\) is a Jacobi theta function.
It has the same
index \(m\) elliptic law and the same \(d\)-torsion pole geometry. This shows that the higher Dyson recurrence itself generates the
Jacobi geometry underlying the correction theory.
\end{remark}

\medskip
\begin{remark}
At the point \(x=1\), the term \(X_0^{(m)}(1;q)\) appearing in
\eqref{eq:intro-H-definition} is the normalized primitive
\(d\)-th-root specialization in \eqref{eq:intro-normalized-root}:
\[
X_0^{(m)}(1;q)=
\frac{(q^d;q^d)_\infty}{(q;q)_\infty}\RRm_m(\zeta_d;q).
\]
The construction has the following flow:
\[
\mathcal F_{d,x}
\longrightarrow
\mathcal G_{d,x}
\longrightarrow
\Phi_{d,x}:=P_d\mathcal G_{d,x}
\longrightarrow
\mathcal H_{d,x}
\longrightarrow
\widetilde{\mathcal H}_{d,x}
\longrightarrow
\widetilde{\mathcal H}^{F,\mathrm{loc}}_{d,x},
\]
where the last two functions are defined below.
The first two arrows package the higher Dyson values into an elliptic
variable. The product \(P_d\) exposes a finite defect, the Appell--Lerch
kernels cancel that defect, and the final two arrows remove the constant twist
and the remaining local principal parts.

We recall the notation:
\begin{center}
\renewcommand{\arraystretch}{1.18}
\begin{tabular}{@{}ll@{}}
\(\mathcal F_{d,x}\) & generating series for the deformed higher Dyson sequence, \eqref{eq:intro-F},\\
\(\mathcal G_{d,x}\) & rescaled generating series in the elliptic variable, \eqref{eq:intro-G},\\
\(P_d\) & product normalizing the elliptic multiplier, \eqref{eq:intro-P},\\
\(\Phi_{d,x}\) & normalized object before correction, \eqref{eq:intro-Phi},\\
\(\mathcal H_{d,x}\) & corrected meromorphic function, \eqref{eq:intro-H-definition},\\
\(\widetilde{\mathcal H}_{d,x}\) & translate satisfying the standard index \(m\) law, \eqref{eq:intro-translation-new},\\
\(\widetilde{\mathcal H}^{F,\mathrm{loc}}_{d,x}\) & holomorphic finite part after local polar subtraction, \eqref{eq:intro-polar-subtraction-new}.
\end{tabular}
\end{center}
\end{remark}

The first theorem isolates the finite algebraic obstruction. This is the
point at which the higher Dyson deformation departs from the classical rank
series: the obstruction is not infinite or mysterious, but a completely
explicit rational expression supported on torsion data.

\begin{theorem}%[Finite defect and correction data]
\label{thm:intro-defect}
Fix \(m\geq2\), put \(d=2m+1\), and let \(x=e^{2\pi i\alpha}\) be a root of
unity with \(0\leq\alpha<1\). Then the following statements hold.
\begin{enumerate}[label=\textnormal{(\roman*)},leftmargin=*]
\item The polynomial \(R_{d,x}\in q^{-1}\CC[[q]][u]\) in
\eqref{eq:intro-R-defining-equation} exists and has degree at most \(d-2\),
with
\[
R_{d,x}(0;q)=-X_0^{(m)}(x;q).
\]

\item The partial fraction coefficients \(D_{r,x}(q)\) and \(E_x(q)\) in
\eqref{eq:intro-partial-fraction} exist and are unique.
%Here \(E_x(q)=0\) when \(x^d=1\).

\item The series in \eqref{eq:intro-B-kernel} and
\eqref{eq:intro-Ar-kernel} converge normally on compact subsets of
\(\HH\times\CC\) disjoint from the orbits
\[
w\equiv -\frac rd \pmod{\ZZ\tau+\ZZ}
\qquad(0\leq r\leq d-1).
\]
When \(x^d\neq1\), the series in \eqref{eq:intro-Aext-kernel} converges
normally on compact subsets disjoint from those orbits and from
\[
w\equiv -\alpha \pmod{\ZZ\tau+\ZZ}.
\]
\end{enumerate}
\end{theorem}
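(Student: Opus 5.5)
The approach is to derive a finite linear $q$-difference recurrence for the sequence $X_j^{(m)}(x;q)$ and package it into the generating function; this gives (i). Part (ii) is then a partial-fraction computation over the Laurent series field, and (iii) is an estimate using the elliptic multiplier of $P_d$.

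\textbf{The recurrence.} The basic identity is
\[
G_d(y;q)=\frac{1-y^dq^d}{1-yq}\,G_d(yq;q)=\sum_{k=0}^{d-1}y^kq^k\,G_d(yq;q).
\]
Substitute $y=xq^n$ into $X_j^{(m)}(x;q)=\sum_{n\ge0}q^{n^2+jn}G_d(xq^n;q)$ and shift $n+1\mapsto n$. The exponent becomes $(n-1)^2+j(n-1)+kn=n^2+(j+k-2)n+1-j$. Each inner sum is then $X^{(m)}_{j+k-2}$ minus its $n=0$ term $G_d(x;q)$. This gives
\[
q^{j-1}X_j^{(m)}(x;q)=\sum_{k=0}^{d-1}x^kX^{(m)}_{j+k-2}(x;q)-C_x\,G_d(x;q)\qquad(j\in\ZZ).
\]
Multiply by $t^j$ and sum over $j\ge0$. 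The left side produces $q^{-1}\mathcal F_{d,x}(qt;q)$. On the right, each term $x^k\sum_j X_{j+k-2}t^j$ equals $x^kt^{2-k}\mathcal F_{d,x}(t;q)$ corrected by finitely many boundary terms. These involve $X_{-2},X_{-1}$ (added) and $X_0,\dots,X_{k-3}$ (removed), and they are polynomials in $t^{\pm1}$. The constant term contributes $-C_xG_d(x;q)/(1-t)$.

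Next set $t=xu$, multiply by $x^{-2}$, and multiply through by $u^{d-3}$. The homogeneous factor $\sum_k x^kt^{2-k}$ becomes $x^2u^{3-d}\frac{u^d-1}{u-1}$ up to the normalization in \eqref{eq:intro-R-defining-equation}. The inhomogeneous term becomes the displayed $C_x$-term. All remaining boundary contributions collect into $qu^{-(d-3)}R_{d,x}(u;q)$ with $R_{d,x}$ a polynomial. Its degree is at most $d-2$ because the boundary sums have $t$-degrees between $3-d$ and $1$. The coefficients lie in $q^{-1}\CC[[q]]$, since the $X_j$ lie in $\CC[[q]]$ and the prefactor contributes $q^{-1}$. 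Uniqueness is immediate, since $R_{d,x}$ is determined by solving \eqref{eq:intro-R-defining-equation} for it. For the value at $0$, I would track the lowest-degree boundary term. After the shift, the only contribution to $u^0$ comes from removing $X_0$ (from $k=d-1$, the extreme index $j+d-3$), which should give $R_{d,x}(0;q)=-X_0^{(m)}(x;q)$. This bookkeeping of boundary terms is the one place where sign and index errors are likely. I would verify it by comparing the coefficients of $u^0$ on both sides of \eqref{eq:intro-R-defining-equation} directly.

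\textbf{Part (ii).} Work over the field $\CC((q))$. In the case $x^d\neq1$, the roots of $u(1-u^d)(1-xu)$ are pairwise distinct: $0$, the $\omega^{-r}$, and $x^{-1}$. Moreover $\deg Q_{d,x}\le d+1<d+2$, so the rational function is proper, and simple-pole partial fractions exist and are unique. The coefficient of $1/u$ is $Q_{d,x}(0;q)=R_{d,x}(0;q)=-X_0^{(m)}(x;q)$ by (i). The remaining coefficients are the residues $Q_{d,x}(\rho)/\bigl(\text{derivative of the denominator at }\rho\bigr)$, so they lie in $q^{-1}\CC[[q]]$.

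In the case $x^d=1$, we have $\varepsilon_x=0$ and the denominator $u(1-u^d)$ has $d+1$ simple roots. For both $x=1$ and $x\ne1$ the degree of $Q_{d,x}$ is at most $d-1$, so the rational function is again proper. The same residue argument applies, with $E_x:=0$.

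\textbf{Part (iii).} I would compute the multiplier of $P_d$ under $u\mapsto q^nu$ explicitly from the triple product. The factor $J(-u;q)^{d+1}$ has index $(d+1)/2=m+1$. The quotient $(q/u;q)_\infty/(q^d/u^d;q^d)_\infty$ behaves like $J(u;q)/J(u^d;q^d)$ up to harmless $(q;q)$-factors, so it has index $\tfrac12-\tfrac d2=-m$. The expected outcome is that
\[
q^{mn^2}u^{2mn}P_d(q^nu;q)=P_d(u;q)\,\Pi_n(u;q)\,q^{cn^2+O(n)},\qquad c>0,
\]
where $\Pi_n$ is a finite product of factors $(1-q^ku)^{\pm1}$ and $(1-q^{dk}u^d)^{-1}$. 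On a compact set avoiding the torsion orbits these factors are uniformly bounded with at most geometric growth in $n$.

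Since $|x|=1$, the Weierstrass M-test then gives normal convergence. The extra denominators $1-\omega^rq^nu$ and $1-xq^nu$ vanish only on $w\equiv-r/d$ and $w\equiv-\alpha$ modulo the lattice, so they are bounded away from zero on compact sets disjoint from those orbits. The main obstacle is pinning down the precise Gaussian exponent $c$ and confirming it is positive for $m\ge2$. If the naive index count is off, I would instead bound $|P_d(q^nu)|$ directly via the product expansion of $J$.
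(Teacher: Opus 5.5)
Your proposal is correct and follows the paper's proof essentially step for step. For (i) you use the $(2m+1)$-term recurrence from $G_d(y;q)/G_d(yq;q)=\sum_{k=0}^{d-1}(yq)^k$, summed against $t^j$ with $t=xu$. For (ii) you use partial fractions over pairwise distinct linear factors, reading off the $1/u$ coefficient from $R_{d,x}(0;q)=-X_0^{(m)}(x;q)$. For (iii) you use an M-test, and the paper's iterate $P_d(q^nu;q)=q^{-n(m+1)-n(n-1)}u^{-2n}\frac{(u;q)_n}{(u^d;q^d)_n}P_d(u;q)$ confirms your index count with $c=m-1>0$.

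Two bookkeeping slips need fixing, though neither is a gap.
\begin{enumerate}
\item The term $-x^{-2}C_xG_d(x;q)/(1-xu)$ is not yet the displayed $-C_xu^2G_d(x;q)/(1-xu)$. You must first absorb the polynomial $-x^{-2}C_xG_d(x;q)(1+xu)$ into $u^{3-d}R_{d,x}$; this leaves the degree bound and the constant term unchanged.
\item The $q^{-1}$ in the coefficients does not come from a prefactor. It comes from $X_{-2}^{(m)}$, whose $n=1$ term is $q^{-1}G_d(xq;q)$, so $X_{-2}^{(m)}\notin\CC[[q]]$.
\end{enumerate}
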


The partial fractions that arise from Theorem~\ref{thm:intro-defect} determine the
Appell--Lerch corrections in \eqref{eq:intro-H-definition}. After these
corrections are inserted, we obtain the elliptic transformation laws for $\mathcal{H}_{d,x}(\tau,w)$ and $\widetilde{\mathcal{H}}_{d,x}(\tau,w)$, where
\begin{equation}\label{eq:intro-translation-new}
\lambda_x:=\frac{\alpha}{m},
\qquad
\widetilde{\mathcal H}_{d,x}(\tau,w):=
\mathcal H_{d,x}(\tau,w+\lambda_x).
\end{equation}

\begin{theorem}%[Elliptic correction]
\label{thm:intro-corrected-lift}
Assume the notation and hypotheses of Theorem~\ref{thm:intro-defect}, and let
\(\mathcal H_{d,x}\) be the function in \eqref{eq:intro-H-definition}. Then
\(\mathcal H_{d,x}(\tau,w)\) is jointly meromorphic on $\mathbb{H} \times \mathbb{C}$, and its possible poles in $w$ are
at most simple. These poles are contained in the union of lattice orbits
\begin{equation}\label{eq:intro-Sigma-new}
\bigcup_{\beta\in\Sigma_{d,x}}
\bigl(\beta+\ZZ\tau+\ZZ\bigr),
\qquad
\Sigma_{d,x}:=
\left\{-\frac rd:0\leq r\leq d-1\right\}
\cup
\begin{cases}
\{-\alpha\},&x^d\neq1,\\
\varnothing,&x^d=1,
\end{cases}
\subset\QQ/\ZZ.
\end{equation}
Moreover, we have
\begin{equation}\label{eq:intro-H-twisted-new}
\mathcal H_{d,x}(\tau,w+1)=\mathcal H_{d,x}(\tau,w),
\qquad
\mathcal H_{d,x}(\tau,w+\tau)=
x^2q^{-m}u^{-2m}\mathcal H_{d,x}(\tau,w).
\end{equation}
For $\widetilde{\mathcal H}_{d,x}(\tau,w)$ the constant twist disappears, i.e.
\begin{equation}\label{eq:intro-H-untwisted-new}
\widetilde{\mathcal H}_{d,x}(\tau,w+1)=
\widetilde{\mathcal H}_{d,x}(\tau,w)
\qquad \text{and} \qquad
\widetilde{\mathcal H}_{d,x}(\tau,w+\tau)=
q^{-m}u^{-2m}\widetilde{\mathcal H}_{d,x}(\tau,w).
\end{equation}
The possible pole set of \(\widetilde{\mathcal H}_{d,x}\) is contained in the
union of orbits represented by
\begin{equation}\label{eq:intro-Sigma-tilde-new}
\widetilde\Sigma_{d,x}:=\Sigma_{d,x}-\lambda_x\subset\QQ/\ZZ.
\end{equation}
\end{theorem}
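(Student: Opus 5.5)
The plan is to prove everything from one computation: how each building block of \eqref{eq:intro-H-definition} transforms under \(w\mapsto w+\tau\), i.e.\ \(u\mapsto qu\). Invariance under \(w\mapsto w+1\) is immediate, since every term depends on \(w\) only through \(u=e^{2\pi i w}\); \(\mathcal G_{d,x}\) is a series in \(u\), and \(P_d\) and the kernels are built from products and sums in \(u\).

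\emph{Step 1: the multiplier of \(P_d\).} From the triple product, \(J(qz)=-z^{-1}J(z)\), so \(J(-qu)=u^{-1}J(-u)\). We also have \((q/(qu);q)_\infty=(1-u^{-1})(q/u;q)_\infty\) and \((q^d/(qu)^d;q^d)_\infty=(1-u^{-d})(q^d/u^d;q^d)_\infty\). Together these give
\[
P_d(qu;q)=q^{-m-1}u^{-2}\,\frac{u-1}{u^d-1}\,P_d(u;q).
\]
Multiplying \eqref{eq:intro-R-defining-equation} by \(P_d(qu;q)\), the homogeneous term acquires exactly the factor \(x^2q^{-m}u^{-2m}\), using \(d-1=2m\). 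Hence
\[
\Phi_{d,x}(\tau,w+\tau)=x^2q^{-m}u^{-2m}\Phi_{d,x}(\tau,w)+q^{-m}u^{-2m}P_d(u;q)\,\mathcal D_x(u),
\]
where the defect \(\mathcal D_x\) is an explicit rational function of \(u\). After clearing denominators, \(\mathcal D_x\) equals \(u^2\) times the left side of \eqref{eq:intro-partial-fraction}. The case split in the definition of \(Q_{d,x}\) is what makes this hold in all three regimes. When \(x=1\), the factor \(1/(1-u)\) coming from the last term of \eqref{eq:intro-R-defining-equation} merges with \(1-u^d\); this is why \(C_1=d\) appears. When \(x\neq1\) but \(x^d=1\), the pole at \(x^{-1}\) is already a \(d\)-torsion pole, and in that case we must check that the \(G_d(x;q)\) term vanishes or is absorbed. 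Substituting \eqref{eq:intro-partial-fraction} then writes the defect as the sum of \(-X_0^{(m)}u\), the terms \(D_{r,x}u^2/(1-\omega^ru)\), and \(\varepsilon_xE_xu^2/(1-xu)\), all multiplied by \(P_d(u)\). I will verify the exact powers of \(u\) carefully against the kernels in \eqref{eq:intro-B-kernel}--\eqref{eq:intro-Aext-kernel}.

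\emph{Step 2: telescoping of the kernels.} Each kernel has the form \(K(w)=\sum_{n\ge0}x^{-2n-2}q^{mn^2}u^{2mn}f(q^nu)\), with \(f=P_d\cdot g\) and \(g\in\{1,\ u/(1-\omega^ru),\ u/(1-xu)\}\). Replacing \(u\) by \(qu\) and reindexing \(n\mapsto n-1\) gives
\[
K(w+\tau)=x^2q^{-m}u^{-2m}\bigl(K(w)-x^{-2}f(u)\bigr),
\]
because \(m(n-1)^2+2m(n-1)=mn^2-m\). Each kernel therefore has the same multiplier as \(\Phi_{d,x}\), plus an inhomogeneous term proportional to \(P_d(u)g(u)\). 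With the coefficients \(-X_0^{(m)}\), \(D_{r,x}\) and \(\varepsilon_xE_x\) from \eqref{eq:intro-H-definition}, these inhomogeneous terms cancel the defect from Step 1 term by term. This yields \eqref{eq:intro-H-twisted-new}.

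\emph{Step 3: meromorphy and poles.} The kernels are holomorphic off the stated orbits by Theorem~\ref{thm:intro-defect}(iii). For \(\Phi_{d,x}\), the factor \(\mathcal G_{d,x}\) converges as a power series in \(u\) for \(|u|\) small. The functional equation \eqref{eq:intro-R-defining-equation}, read as a recursion from \(u\) to \(qu\), continues \(\mathcal G_{d,x}\) meromorphically in \(u\) with poles only where \(u^d\) or \(xu\) meets \(q^{-\mathbb N}\)-multiples of \(1\). The zeros of \(J(-u)^{d+1}\) in \(P_d\) cancel the spurious factors, leaving only the \(1/(q^d/u^d;q^d)_\infty\) poles, which are simple and lie in the orbits \(\Sigma_{d,x}\). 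I expect this bookkeeping to be the main obstacle: showing that every pole is at most simple and lies only in \(\Sigma_{d,x}+\ZZ\tau+\ZZ\). The elliptic law reduces it to a fundamental annulus \(|q|<|u|\le1\), and there the only poles come from \(1-u^d\) and \(1-xu\), each to first order.

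\emph{Step 4: removing the twist.} Put \(w'=w+\lambda_x\). Then \(e^{2\pi i w'}=ue^{2\pi i\alpha/m}\), so \((e^{2\pi i w'})^{-2m}=u^{-2m}x^{-2}\). Substituting into \eqref{eq:intro-H-twisted-new} cancels the factor \(x^2\) and gives \eqref{eq:intro-H-untwisted-new}. The pole set is translated by \(-\lambda_x\), which is \eqref{eq:intro-Sigma-tilde-new}.
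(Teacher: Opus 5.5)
Your Steps 1, 2 and 4 follow the paper's route: the $P_d$-shift, the defect of $\Phi_{d,x}$ expanded by partial fractions, the reindexed kernel shift laws, and the translation by $\lambda_x$. However, Step 1 contains a slip that, as written, breaks the cancellation in Step 2. Multiply \eqref{eq:intro-R-defining-equation} by $P_d(qu;q)=q^{-m-1}u^{-2}\frac{1-u}{1-u^d}P_d(u;q)$ and use $d-3=2m-2$. This gives $\mathcal D_x=\frac{1-u}{1-u^d}\bigl(R_{d,x}-C_xu^{d-1}G_d(x;q)/(1-xu)\bigr)$, which is $u$ times, not $u^2$ times, the left side of \eqref{eq:intro-partial-fraction}. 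Hence $\mathcal D_x=-X_0^{(m)}+\sum_r D_{r,x}\frac{u}{1-\omega^ru}+\varepsilon_xE_x\frac{u}{1-xu}$, and only this version is cancelled by the inhomogeneous terms $-q^{-m}u^{-2m}P_d(u)g(u)$ that you correctly derive in Step 2. With your $-X_0^{(m)}u$, the $\mathcal B$-contribution leaves $q^{-m}u^{-2m}X_0^{(m)}(1-u)P_d(u)$ uncancelled. (For $x\neq1$, $x^d=1$, the $G_d$-term simply vanishes because $C_x=(1-x^d)/(1-x)=0$.)

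The genuine gap is Step 3. First, the bookkeeping for $\Phi_{d,x}$ is wrong.
\begin{enumerate}[label=\textnormal{(\alph*)},leftmargin=*]
\item The zeros of $J(-u)^{d+1}$ lie on $-q^{\ZZ}$. Since $d$ is odd, this set is disjoint from the $d$-torsion orbits, so these zeros cancel none of the torsion-orbit candidates.
\item The zero-orbit poles of $1/(q^d/u^d;q^d)_\infty$ are removed by $(q/u;q)_\infty$, not by $J(-u)^{d+1}$.
\item Your phrase ``leaving only the $1/(q^d/u^d;q^d)_\infty$ poles'' drops the poles of $\mathcal G_{d,x}$ at $u=x^{-1}q^{-n}$. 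These in general survive, and they are the reason $-\alpha\in\Sigma_{d,x}$.
\end{enumerate}
The correct reason $\Phi_{d,x}$ has only simple poles is separation. $P_d$ has simple poles only at $u=\zeta q^k$ with $k\ge1$ and $\zeta^d=1\neq\zeta$, all in $|u|<1$. Lemma~\ref{lem:F-meromorphic} gives $\mathcal G_{d,x}$ simple poles only at $x^{-1}q^{-n}$ with $n\ge0$, all in $|u|\ge1$.

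Second, and more seriously, Theorem~\ref{thm:intro-defect}(iii) only gives holomorphy of the kernels off a discrete set. Your reduction to a fundamental annulus does not determine the nature of the singularities on that set. By $P_d(q^nu;q)=q^{-n(m+1)-n(n-1)}u^{-2n}\frac{(u;q)_n}{(u^d;q^d)_n}P_d(u;q)$, every summand with $n\ge1$ of $\mathcal B_d^{[x]}$ and of $\mathcal A_{d,r}^{[x]}$ has a simple pole at each $u=\zeta$ with $\zeta^d=1\ne\zeta$. So infinitely many poles accumulate at the same points of every fundamental annulus.

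Your claim ``each to first order'' therefore needs the argument of Lemma~\ref{lem:normal-simple-poles}. Multiply by $w-w_0$ (equivalently, pull out the single factor $1/(1-u^d)$ that each summand carries). Then show normal convergence on a disc about $w_0$, using the bounds $O(|q|^{(m-1)n^2-mn})$ and $O(|q|^{(m-1)n^2-(m-1)n})$, which are summable precisely because $m\ge2$. This is also what gives meromorphy of $\mathcal H_{d,x}$, jointly in $(\tau,w)$ by local uniformity. Your plan does not otherwise establish meromorphy.
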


The last step is local. The correction is already elliptic, but it may
still have simple poles on finitely many torsion orbits. Subtracting the
matching local Appell--Lerch principal parts leaves a holomorphic section of
an index \(m\) theta bundle, and therefore a finite theta expansion. We define
\[
A_{m,\beta}(\tau,w):=
-\sum_{n\in\ZZ}
\frac{q^{mn^2}u^{2mn}}{1-e^{-2\pi i\beta}q^n u}
\]
and
\begin{equation}\label{eq:intro-polar-subtraction-new}
\widetilde{\mathcal H}^{F,\mathrm{loc}}_{d,x}(\tau,w)
:=
\widetilde{\mathcal H}_{d,x}(\tau,w)
-
\sum_{\beta\in\widetilde\Sigma_{d,x}}
 c_{\beta,x}(\tau)A_{m,\beta}(\tau,w).
\end{equation}

\begin{theorem}%[Theta decomposition]
\label{thm:intro-finite-part}
Assume the notation and hypotheses of Theorem~\ref{thm:intro-corrected-lift}.
For each \(\beta\in\widetilde\Sigma_{d,x}\), define \(c_{\beta,x}(\tau)\) by
\[
\widetilde{\mathcal H}_{d,x}(\tau,\beta+\varepsilon)
=
\frac{c_{\beta,x}(\tau)}{2\pi i\,\varepsilon}+O(1)
\qquad(\varepsilon\to0).
\]
Then the functions \(c_{\beta,x}(\tau)\) are holomorphic in \(\tau\). If
\(\widetilde{\mathcal H}_{d,x}\) is holomorphic at the orbit represented by
\(\beta\), then \(c_{\beta,x}(\tau)=0\).

Moreover, \(\widetilde{\mathcal H}^{F,\mathrm{loc}}_{d,x}\) is holomorphic in
\(w\) and satisfies the untwisted elliptic law \eqref{eq:intro-H-untwisted-new}.
Consequently there are unique holomorphic functions \(h_{\ell,x}(\tau)\),
indexed by \(\ell\pmod{2m}\), such that
\begin{equation}\label{eq:intro-theta-decomp-new}
\widetilde{\mathcal H}^{F,\mathrm{loc}}_{d,x}(\tau,w)
=
\sum_{\ell\,(\mathrm{mod}\,2m)}
h_{\ell,x}(\tau)\,\vartheta_{m,\ell}(\tau,w),
\qquad
\end{equation}
where
$$
\vartheta_{m,\ell}(\tau,w):=
\sum_{\substack{r\in\ZZ\\ r\equiv\ell\,(\mathrm{mod}\,2m)}}
q^{r^2/4m}u^r.
$$
\end{theorem}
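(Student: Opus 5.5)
We take Theorem~\ref{thm:intro-corrected-lift} as given: $\widetilde{\mathcal H}_{d,x}$ is jointly meromorphic, its poles in $w$ are at most simple and lie on the finitely many orbits $\beta+\ZZ\tau+\ZZ$ with $\beta\in\widetilde\Sigma_{d,x}$, and it satisfies the untwisted law \eqref{eq:intro-H-untwisted-new}. The plan is the standard Dabholkar--Murthy--Zagier argument in three steps.

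\textbf{Step 1 (the residues).} Since poles are at most simple, $c_{\beta,x}(\tau)$ is $2\pi i\varepsilon\,\widetilde{\mathcal H}_{d,x}(\tau,\beta+\varepsilon)$ evaluated at $\varepsilon=0$. Equivalently, it is a contour integral
\[
c_{\beta,x}(\tau)=\oint_{|\varepsilon|=\delta}\widetilde{\mathcal H}_{d,x}(\tau,\beta+\varepsilon)\,d\varepsilon
\]
over a small circle. Here $\delta$ can be chosen locally uniformly in $\tau$, because the pole orbits move continuously with $\tau$ and only the orbit point $\beta$ lies near $\beta$. Joint meromorphy makes the integrand holomorphic in $\tau$ on the circle, so $c_{\beta,x}$ is holomorphic. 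If there is no pole at $\beta$, the integrand is holomorphic inside the circle and $c_{\beta,x}=0$.

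\textbf{Step 2 (the model kernels).} We check the properties of $A_{m,\beta}$.
\begin{itemize}
\item \emph{Convergence.} The series converges normally off its pole set because $q^{mn^2}$ dominates, so $A_{m,\beta}$ is meromorphic.
\item \emph{Elliptic law.} Periodicity in $w\mapsto w+1$ is clear. Shifting $w\mapsto w+\tau$ and reindexing $n\mapsto n-1$ gives
\[
A_{m,\beta}(\tau,w+\tau)=q^{-m}u^{-2m}A_{m,\beta}(\tau,w)+(\text{a finite Laurent polynomial in }u).
\]
This is the usual Appell--Lerch defect.
\item \emph{Poles.} The only poles are simple, at $w\equiv\beta$ modulo the lattice. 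The term $n=0$ has residue in $w$ at $w=\beta$ equal to $-(1/(2\pi i))\cdot(-1)$ up to sign. With the normalization built into $-\sum$, the local behaviour is $A_{m,\beta}(\tau,\beta+\varepsilon)=1/(2\pi i\varepsilon)+O(1)$. At the translated points $\beta+k\tau$, the principal parts scale by exactly the index-$m$ multiplier. I will carefully verify this sign and normalization.
\end{itemize}

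\textbf{Step 3 (holomorphy and theta expansion).} Subtracting $\sum_\beta c_{\beta,x}A_{m,\beta}$ kills the principal part at each $\beta\in\widetilde\Sigma_{d,x}$. Both $\widetilde{\mathcal H}_{d,x}$ and the correction obey the same multiplier up to the defect, so the principal parts at $\beta+k\tau+l$ are also killed. Hence $\widetilde{\mathcal H}^{F,\mathrm{loc}}_{d,x}$ is entire in $w$.

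The main obstacle is the elliptic law. The subtracted kernels are only mock: they are not exactly elliptic, because of the finite Laurent-polynomial defect. I would handle this as follows.
\begin{itemize}
\item Either show that the defect vanishes against the structure of $\widetilde{\mathcal H}_{d,x}$.
\item Or, if the paper's convention intends it, note that the DMZ statement asserts only that $\widetilde{\mathcal H}^{F,\mathrm{loc}}$ satisfies the law up to this defect. In that case one must verify that $\sum_\beta c_{\beta,x}$ times the defect polynomials cancels.
\end{itemize}
The first thing I would try is a residue-theorem argument. Integrate $\widetilde{\mathcal H}_{d,x}\,u^{-k}$ around a fundamental parallelogram. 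By the elliptic law, this gives linear relations $\sum_\beta c_{\beta,x}e^{-2\pi i k\beta}q^{\ast}=0$ for each $k$ in the range that appears in the defect. Those relations are exactly what is needed for the defect of $\sum_\beta c_{\beta,x}A_{m,\beta}$ to vanish.

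Once $\widetilde{\mathcal H}^{F,\mathrm{loc}}_{d,x}$ is entire with the untwisted law, expand it in a Fourier series $\sum_r a_r(\tau)u^r$ using periodicity. The $\tau$-shift forces
\[
a_{r+2m}(\tau)=q^{(r+m)}a_r(\tau)\quad\text{(suitably normalized)},
\]
which says that $a_r q^{-r^2/4m}$ depends only on $r\bmod 2m$. Setting $h_{\ell,x}=a_\ell q^{-\ell^2/4m}$ gives \eqref{eq:intro-theta-decomp-new}. Uniqueness and holomorphy of $h_{\ell,x}$ follow from uniqueness and holomorphy of Fourier coefficients, obtained as integrals over $w\in[0,1]$.
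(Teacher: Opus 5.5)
\textbf{There is a genuine gap in Step 2.} Your overall architecture matches the paper's route through Proposition~\ref{prop:local-polar-subtraction} and Lemma~\ref{lem:theta-decomposition}: holomorphy of $c_{\beta,x}$ from the simple-pole structure, subtraction of $c_{\beta,x}A_{m,\beta}$ using the normalization $A_{m,\beta}(\tau,\beta+\varepsilon)=1/(2\pi i\varepsilon)+O(1)$, and then the Fourier recursion $a_{r+2m}=q^{r+m}a_r$. But Step 2 contains a false claim that leaves the central assertion of the theorem unproved.

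The kernel $A_{m,\beta}$ has no Laurent-polynomial defect. The sum is bilateral, so after $u\mapsto qu$ and $r=n+1$ one gets
\[
A_{m,\beta}(\tau,w+\tau)=-\sum_{n\in\ZZ}\frac{q^{mn^2+2mn}u^{2mn}}{1-e^{-2\pi i\beta}q^{n+1}u}
=q^{-m}u^{-2m}\Bigl(-\sum_{r\in\ZZ}\frac{q^{mr^2}u^{2mr}}{1-e^{-2\pi i\beta}q^{r}u}\Bigr)=q^{-m}u^{-2m}A_{m,\beta}(\tau,w).
\]
This uses $mn^2+2mn=mr^2-m$, and the reindexing is a bijection of $\ZZ$, so no boundary terms appear. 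This is Proposition~\ref{prop:appell-constant}(2). Finite defects arise only for one-sided sums such as $\mathcal B_d^{[x]}$ and $\mathcal A_{d,r}^{[x]}$, and those were already handled in Theorem~\ref{thm:intro-corrected-lift}. The non-modularity of Appell--Lerch sums in the Dabholkar--Murthy--Zagier theory lives in $\tau$, not in the elliptic variable.

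Because you assumed a defect, your proposal never establishes that $\widetilde{\mathcal H}^{F,\mathrm{loc}}_{d,x}$ satisfies \eqref{eq:intro-H-untwisted-new} exactly. You even allow that it might hold only up to a defect, which would contradict the statement. Without the exact law, the Fourier recursion in Step 3 cannot be run.

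Your proposed repair would not work either. For index $m\geq1$, the top and bottom edges of the period parallelogram do not cancel. Integrating $\widetilde{\mathcal H}_{d,x}u^{-k}$ counterclockwise gives
\[
2\pi i\sum\operatorname{Res}\bigl(\widetilde{\mathcal H}_{d,x}u^{-k}\bigr)=I_k-q^{-m-k}I_{k+2m},
\]
where $I_k$ is the $k$-th Fourier coefficient along the bottom edge. This ties residues to jumps in Fourier coefficients. It does not produce vanishing relations of the form $\sum_\beta c_{\beta,x}e^{-2\pi ik\beta}q^{\ast}=0$.

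The correct fix is just the exact computation above. Both $\widetilde{\mathcal H}_{d,x}$ and $\sum_\beta c_{\beta,x}A_{m,\beta}$ satisfy the same index-$m$ law exactly, so their difference does too. That difference is entire in $w$ once the principal parts at $\beta$, and hence along each whole orbit, are matched. Lemma~\ref{lem:theta-decomposition} then gives the theta decomposition and the uniqueness of the $h_{\ell,x}$.
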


Theorems~\ref{thm:intro-defect}--\ref{thm:intro-finite-part} are elliptic structure theorems. They identify the finite
obstruction to the natural index \(m\) elliptic law and show that this
obstruction is removed by canonical Appell--Lerch corrections. The resulting
theta decompositions are therefore consequences of the corrected elliptic
geometry. Questions concerning modular, mock modular, or harmonic Maass
properties of the theta coefficients are left for future work.

The results of this paper emerged from a human–AI collaboration. The higher Dyson systems studied here lie beyond the classical case $m=1$, where Dyson’s rank generating function leads to Ramanujan’s mock theta functions and the modern theory of harmonic Maass forms. For $m>1$, these series do not appear to be mock modular forms, and their analytic structure has remained difficult to identify. In the exploratory stage of this project, AxiomProver and the authors tested many possible structures before the elliptic correction mechanism described here became clear. AxiomProver also played a crucial technical role in verifying the many delicate formulas involving finite recurrences, root-of-unity phases, product normalizations, correction kernels, and elliptic transformation factors. After several failed formalizations, involving dropped signs and other minor algebraic mistakes, the key formulas were correctly stated, formalized, and checked in Lean. Thus, AI contributed both to the discovery process and to the verification of the algebraic backbone of the theory. The broader analytic and conceptual framework is the work of the authors.

The paper is organized as follows. Section~\ref{sec:recurrence} proves the recurrence for the
deformed family. Section~\ref{sec:3} records the elliptic and Appell--Lerch tools.
Section~\ref{sec:corrected-lift} constructs the elliptically corrected function and proves
Theorems~\ref{thm:intro-defect}--\ref{thm:intro-finite-part}.
Section~\ref{sec:level5} treats the level \(5\) case.
For \(m=2\), \(d=5\), Theorems~\ref{thm:intro-defect}--\ref{thm:intro-finite-part}
give a corrected index \(2\) elliptic object attached to
\[
\RRm_2(\zeta_5;q)=
\sum_{n\geq0}q^{n^2}\frac{(q;q)_n}{(q^5;q^5)_n}.
\]
We compute this first new case explicitly, including
the normalizing factor, the correction kernels, and the Jacobi carrier.
For completeness, in Appendix~\ref{app:combinatorics}, we
give the combinatorial origin of the higher Dyson series.
The combinatorial interpretation of \(\RRm_m(z;q)\) is given in terms of uncoupled
higher Dyson symbols. The main
body of the paper uses only the series \eqref{eq:intro-Xj}, its finite
\(q\)-difference equation, and the elliptic correction mechanism.
Finally, in Appendix~\ref{app:axiomprover} we describe how AxiomProver, an
AI system for mathematical research under development, formalized and proved
the key new formulas of this paper in Lean/Mathlib. Although the correction
mechanism is conceptually transparent, the identities that carry it are
technical: the defect polynomial, its partial-fraction residues, the kernel
shift laws, and the elliptic transformation multipliers all involve delicate
bookkeeping of exponents, signs, and root-of-unity phases, where a single
dropped sign or mistaken algebraic manipulation could go unnoticed. Isolating
these identities and verifying them by machine therefore provides an
independent guarantee that the algebraic core of the theory is correct.

\section*{Acknowledgements} \noindent
The authors thank George Andrews for discussions related to the content of this paper, and they thank Kenny Lau for assembling the GitHub repository that contains the AxiomProver input and output files.

\section{The higher Dyson deformation and its recurrence}\label{sec:recurrence}

Throughout this paper, we write
\[
q=e^{2\pi i\tau},\qquad \tau\in\HH,
\]
and therefore $|q|<1$.
The main analytic argument uses the deformed sequence \(X_j^{(m)}(x;q)\)
rather than the full combinatorial model, where the specialization \(x=1\),
\(j=0\), recovers the normalized primitive \(d\)-th-root specialization of
\(\RRm_m\) by \eqref{eq:intro-normalized-root}. We defer discussion of the partition-theoretic
interpretation to Appendix~\ref{app:combinatorics}.

Let $m\geq 2$ and set $d:=2m+1$. From the introduction, we recall that
\[
G_d(x;q):=\frac{(x^dq^d;q^d)_\infty}{(xq;q)_\infty}.
\]
For every integer $j$, we let
\[
X_j^{(m)}(x;q):=\sum_{n\geq 0} q^{n^2+jn}G_d(xq^n;q).
\]
The corresponding generating series in $t$-aspect is
\[
\mathcal F_{d,x}(t;q):=\sum_{j\geq 0} X_j^{(m)}(x;q)t^j.
\]
The next lemma records the basic meromorphic structure of $\mathcal F_{d,x}(t;q)$ in the variable $t$.

\begin{lemma}\label{lem:F-meromorphic}
Let $x\in\CC^\times$ and $|q|<1$. Assume that
$x\notin q^{-\ZZ_{\geq 1}}$, so that the factors in
$G_d(xq^n;q)$ which occur below are finite. This condition holds, in
particular, for every root of unity $x$. Then one has
\[
\mathcal F_{d,x}(t;q)
=
\sum_{n\geq 0}\frac{q^{n^2}G_d(xq^n;q)}{1-tq^n}
\qquad (|t|<1).
\]
Consequently $\mathcal F_{d,x}(t;q)$ extends meromorphically to the $t$-plane, with at most simple poles at the points $t=q^{-n}$ $(n\geq 0)$.
\end{lemma}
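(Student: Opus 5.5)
The plan is to expand $\mathcal F_{d,x}(t;q)$ as a double series, interchange the two summations, and sum the inner geometric series in $j$.

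\emph{Step 1: a uniform bound.} Since $x\notin q^{-\ZZ_{\geq1}}$, no factor $1-xq^{n+k}$ with $k\geq1$ vanishes. Moreover $|xq^n|\to0$, so for $n\geq n_0$ we have $|xq^{n+k}|\leq 1/2$ for all $k\geq1$. This gives a bound $|G_d(xq^n;q)|\leq C$ uniformly in $n$. To see it, note that the numerator $(x^dq^{d(n+1)};q^d)_\infty$ is bounded by $\prod_k(1+|x|^d|q|^{dk})$. The denominator $(xq^{n+1};q)_\infty$ is bounded away from $0$ for $n\geq n_0$, using $|\log(1-y)|\leq 2|y|$ for $|y|\leq1/2$. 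The finitely many $n<n_0$ give finite nonzero values by hypothesis. For a root of unity $x$ we have $|x|=1$ and $|q|<1$, so $xq^{k}$ with $k\geq1$ never equals $1$, and the hypothesis holds.

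\emph{Step 2: interchange.} For $|t|<1$ and $j\geq0$,
\[
\sum_{j\geq0}\sum_{n\geq0}\bigl|q^{n^2+jn}G_d(xq^n;q)t^j\bigr|\leq C\sum_{n\geq0}|q|^{n^2}\sum_{j\geq0}|t|^j<\infty,
\]
using $|q^{jn}|\leq1$. By Fubini the double series may be rearranged. The inner sum is geometric:
\[
\sum_{j\geq0}(tq^n)^j=\frac{1}{1-tq^n}.
\]
This yields the stated formula.

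\emph{Step 3: meromorphic continuation.} Fix $R>1$ and consider the disc $|t|\leq R$. Only finitely many $n$, namely those with $|q|^{-n}\leq 2R$, have a pole $t=q^{-n}$ near this disc. For the remaining terms, $|tq^n|\leq1/2$, so $|1-tq^n|^{-1}\leq2$. The tail is therefore dominated by $2C\sum_n|q|^{n^2}$ and converges uniformly to a holomorphic function. The finitely many exceptional terms are rational in $t$ with at most a simple pole at $t=q^{-n}$. Hence the sum is meromorphic on every disc, and so on $\CC$, with at most simple poles at $t=q^{-n}$. These points are distinct for distinct $n$ because $|q|<1$, so no two terms combine to produce a higher-order pole.

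The only step requiring care is the uniform bound on $G_d(xq^n;q)$ in Step 1, which justifies both the Fubini interchange and the uniform convergence in Step 3. It is elementary, and I do not expect a genuine obstacle.
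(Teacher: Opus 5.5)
Your proof is correct and follows the paper's route: absolute convergence for $|t|<1$ justifies interchanging the $j$- and $n$-sums, the inner geometric series gives $1/(1-tq^n)$, and the resulting series provides the meromorphic continuation. You also supply the uniform bound on $G_d(xq^n;q)$ and the locally uniform convergence on discs $|t|\leq R$, which the paper's proof leaves implicit.
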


\begin{proof}
For $|t|<1$ the defining series converges absolutely, so one may interchange the $j$-sum and the $n$-sum:
\[
\mathcal F_{d,x}(t;q)
=
\sum_{j\geq 0}\sum_{n\geq 0} q^{n^2+jn}G_d(xq^n;q)t^j
=
\sum_{n\geq 0} q^{n^2}G_d(xq^n;q)\sum_{j\geq 0}(tq^n)^j.
\]
Summing the geometric series gives the stated formula. The right-hand side is meromorphic in $t$ with only the indicated simple poles.
\end{proof}

The next step is to extract a finite \(q\)-difference relation from the higher
Dyson system. This recurrence is the algebraic engine of the paper: at nontrivial
\(d\)-th roots of unity it collapses to a homogeneous relation, while away from
those roots it produces the explicit defect that will later be removed by the
Appell--Lerch corrections.
The crucial recurrence is obtained from the elementary quotient
\[
\frac{G_d(x;q)}{G_d(xq;q)}=\frac{1-x^dq^d}{1-xq}=1+xq+x^2q^2+\cdots+x^{d-1}q^{d-1}.
\]

\begin{proposition}%[The fundamental $(2m+1)$-term recurrence]
\label{prop:recurrence}
Let $m\geq 2$, set $d:=2m+1$, and let $x\in\CC^\times$. Then for every integer $j$ one has
\begin{equation}\label{eq:fundamental-recurrence}
\sum_{r=0}^{d-1} x^r X_{j+r-2}^{(m)}(x;q)-q^{j-1}X_j^{(m)}(x;q)
=
\Bigl(\sum_{r=0}^{d-1}x^r\Bigr)G_d(x;q).
\end{equation}
In particular, if $x$ is a nontrivial $d$-th root of unity, then the right-hand side vanishes.
\end{proposition}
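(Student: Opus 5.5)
The plan is to expand each $X_{j+r-2}^{(m)}$ as its defining series, swap the finite $r$-sum inside the $n$-sum, and use the elementary quotient $G_d(y;q)/G_d(yq;q)=(1-y^dq^d)/(1-yq)=\sum_{r=0}^{d-1}y^rq^r$, recorded just before the statement, to telescope the weights.

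\emph{Assumptions and interchange.} Throughout I assume, as in Lemma~\ref{lem:F-meromorphic}, that $x\notin q^{-\ZZ_{\geq1}}$, so every $G_d(xq^n;q)$ with $n\ge0$ is finite. This covers roots of unity; otherwise the identity can be read as one of meromorphic functions of $x$. Since $|q|<1$, each series $\sum_{n\ge0}q^{n^2+(j+r-2)n}G_d(xq^n;q)$ converges absolutely for every integer $j$. Indeed, $G_d(xq^n;q)\to1$ as $n\to\infty$, and $q^{n^2}$ dominates any linear exponent. Hence the finite sum over $r$ may be moved inside:
\[
\sum_{r=0}^{d-1}x^rX_{j+r-2}^{(m)}(x;q)=\sum_{n\ge0}q^{n^2+(j-2)n}G_d(xq^n;q)\sum_{r=0}^{d-1}(xq^n)^r .
\]

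\emph{Splitting off $n=0$.} For $n=0$ the inner sum is $C_x$, so this term contributes $C_xG_d(x;q)$. That is exactly the right-hand side of \eqref{eq:fundamental-recurrence}.

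\emph{Telescoping the terms with $n\ge1$.} For $n\ge1$, take $y=xq^{n-1}$ in the quotient identity. This gives $\sum_r(xq^n)^r=(1-x^dq^{dn})/(1-xq^n)$, and therefore
\[
G_d(xq^n;q)\sum_{r=0}^{d-1}(xq^n)^r=G_d(xq^{n-1};q).
\]
The $n\ge1$ part thus becomes $\sum_{n\ge1}q^{n^2+(j-2)n}G_d(xq^{n-1};q)$. Reindex by $n\mapsto n+1$. The exponent becomes
\[
(n+1)^2+(j-2)(n+1)=n^2+jn+j-1,
\]
so this part equals $q^{j-1}X_j^{(m)}(x;q)$. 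Moving it to the left-hand side gives \eqref{eq:fundamental-recurrence}.

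\emph{Roots of unity.} If $x$ is a $d$-th root of unity with $x\neq1$, then $C_x=(1-x^d)/(1-x)=0$, so the right-hand side vanishes.

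\emph{Main obstacle.} The only delicate point is bookkeeping rather than analysis. One must isolate the boundary term $n=0$, which is where the inhomogeneity comes from, instead of applying the quotient identity there; that would involve $G_d(xq^{-1};q)$. One must also check the exponent shift carefully, since it is the only place an off-by-one in $j-1$ versus $j-2$ can creep in.
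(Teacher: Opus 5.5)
Your proof is correct and is essentially the paper's argument read in the opposite direction. The paper expands $X_j^{(m)}$ using $G_d(xq^n;q)=\bigl(\sum_{r}x^rq^{r(n+1)}\bigr)G_d(xq^{n+1};q)$ and reindexes with $N=n+1$ to recognize each $X_{j+r-2}^{(m)}$ minus its $N=0$ term $G_d(x;q)$; you instead start from $\sum_r x^rX_{j+r-2}^{(m)}$, peel off $n=0$, and collapse the rest with the same quotient identity and the same shift. (Your restriction $x\notin q^{-\ZZ_{\geq1}}$ is harmless but not needed, since $G_d(y;q)=\prod_{k\geq1}\sum_{r=0}^{d-1}(yq^k)^r$ is entire in $y$.)
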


\begin{proof}
From the quotient identity above we obtain
\[
G_d(xq^n;q)=\Bigl(\sum_{r=0}^{d-1}x^rq^{r(n+1)}\Bigr)G_d(xq^{n+1};q).
\]
We multiply by $q^{n^2+jn}$ and sum over $n\geq 0$, which gives
\[
X_j^{(m)}(x;q)
=
\sum_{r=0}^{d-1}x^r\sum_{n\geq 0}q^{n^2+(j+r)n+r}G_d(xq^{n+1};q).
\]
Setting $N:=n+1$ yields
\[
n^2+(j+r)n+r=N^2+(j+r-2)N+(1-j).
\]
Therefore, we have
\[
X_j^{(m)}(x;q)
=
q^{1-j}\sum_{r=0}^{d-1}x^r\sum_{N\geq 1} q^{N^2+(j+r-2)N}G_d(xq^N;q).
\]
The $N=0$ term of $X_{j+r-2}^{(m)}(x;q)$ is precisely $G_d(x;q)$. Hence
\[
X_j^{(m)}(x;q)
=
q^{1-j}\sum_{r=0}^{d-1}x^r\bigl(X_{j+r-2}^{(m)}(x;q)-G_d(x;q)\bigr),
\]
which is equivalent to \eqref{eq:fundamental-recurrence}.
\end{proof}

\section{Elliptic preliminaries}\label{sec:3}

In this section we collect the analytic facts about elliptic functions and Jacobi forms that are needed later; for the general theory we refer to \cite{EichlerZagier,BFOR}.

\subsection{Theta decomposition for holomorphic functions with elliptic law}

For $m\geq 1$ and $\ell\in\ZZ,$ we write
\[
\vartheta_{m,\ell}(\tau,z)
:=
\sum_{\substack{r\in\ZZ\\ r\equiv \ell\, (\mathrm{mod}\,2m)}} q^{r^2/4m}e^{2\pi i r z}.
\]
The following standard lemma records the elliptic half of the Jacobi-form
structure: a holomorphic function satisfying the index \(m\) elliptic
transformation law has a finite expansion in the theta functions
\(\vartheta_{m,\ell}\), with one coefficient for each class
\(\ell \pmod{2m}\) (see \cite[Chapter~5]{EichlerZagier}).

\begin{lemma}[Classical theta decomposition]\label{lem:theta-decomposition}
Let \(m\geq 1\). Suppose that \(F(\tau,z)\) is holomorphic on
\(\HH\times\CC\) and satisfies
\begin{equation}\label{eq:index-m-elliptic}
F(\tau,z+1)=F(\tau,z),
\qquad
F(\tau,z+\tau)=q^{-m}e^{-4\pi i m z}F(\tau,z).
\end{equation}
Then there exist unique holomorphic coefficient functions \(h_\ell(\tau)\), indexed by \(\ell\pmod{2m}\), such that
\begin{equation}\label{eq:theta-decomp-formula}
F(\tau,z)=\sum_{\ell\, (\mathrm{mod}\,2m)} h_\ell(\tau)\,\vartheta_{m,\ell}(\tau,z).
\end{equation}
\end{lemma}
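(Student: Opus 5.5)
We expand $F$ in a Fourier series in $z$ and let the second elliptic law force a recursion on the Fourier coefficients. Since $F(\tau,\cdot)$ is entire and $1$-periodic, for each fixed $\tau$ we can write
\[
F(\tau,z)=\sum_{r\in\ZZ} c_r(\tau)\,e^{2\pi i r z},
\qquad
c_r(\tau)=\int_0^1 F(\tau,z)e^{-2\pi i r z}\,dz .
\]
The series converges absolutely and locally uniformly in $z$, because $e^{2\pi i z}\mapsto F$ is holomorphic on $\CC^\times$, so this is just a Laurent expansion. Holomorphy of $F$ in $\tau$ and differentiation under the integral show that each $c_r(\tau)$ is holomorphic on $\HH$.

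Next we insert the second transformation law. The left side is
\[
F(\tau,z+\tau)=\sum_r c_r q^r e^{2\pi i r z},
\]
and the right side is
\[
q^{-m}e^{-4\pi i m z}F=\sum_r c_{r+2m}\,q^{-m}e^{2\pi i r z}.
\]
Comparing coefficients, which is legitimate by uniqueness of Laurent coefficients, gives $c_{r+2m}=q^{r+m}c_r$. Put $a_r:=c_r q^{-r^2/4m}$, using a fixed branch $q^{s}=e^{2\pi i s\tau}$. Since
\[
\frac{(r+2m)^2-r^2}{4m}=r+m,
\]
we get $a_{r+2m}=a_r$. So $a_r$ depends only on $r\bmod 2m$, and we define $h_\ell(\tau):=a_\ell(\tau)$, which is holomorphic.

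Regrouping the absolutely convergent series by residue class mod $2m$ then gives
\[
F=\sum_{\ell\,(\mathrm{mod}\,2m)} h_\ell\sum_{r\equiv\ell} q^{r^2/4m}e^{2\pi i r z}=\sum_{\ell} h_\ell\,\vartheta_{m,\ell},
\]
which is \eqref{eq:theta-decomp-formula}. Each $\vartheta_{m,\ell}$ converges normally because $|q|<1$ gives Gaussian decay, so no separate convergence issue arises.

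For uniqueness, the functions $\vartheta_{m,\ell}$ for distinct $\ell\bmod 2m$ have disjoint Fourier supports in $z$. Hence if $\sum_\ell h_\ell\vartheta_{m,\ell}=0$, then reading off the coefficient of $e^{2\pi i\ell z}$ for a representative $\ell$ gives $h_\ell q^{\ell^2/4m}=0$, so $h_\ell=0$.

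The only point needing care is justifying the termwise comparison of coefficients after the shift $z\mapsto z+\tau$. This follows because both sides are holomorphic and $1$-periodic in $z$, so the uniqueness of Fourier coefficients applies directly.
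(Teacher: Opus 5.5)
Your proof is correct and follows the paper's argument essentially step for step: a Fourier expansion in $z$ with coefficients $c_r(\tau)$ that are holomorphic in $\tau$, the recursion $c_{r+2m}=q^{r+m}c_r$ obtained from the $\tau$-shift, the $2m$-periodicity of $q^{-r^2/4m}c_r$, and regrouping by residue class. Your uniqueness argument via the disjoint Fourier supports of the $\vartheta_{m,\ell}$ simply spells out what the paper calls uniqueness of the Fourier expansion.
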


\begin{proof}
Since \(F(\tau,z+1)=F(\tau,z)\), the function \(z\mapsto F(\tau,z)\) has a
Fourier expansion
\[
F(\tau,z)=\sum_{r\in\ZZ} c_r(\tau)e^{2\pi i r z},
\]
which converges locally uniformly in \((\tau,z)\). The coefficients may be
written as
\[
c_r(\tau)=\int_0^1 F(\tau,t)e^{-2\pi i r t}\,dt.
\]
Because \(F\) is holomorphic on \(\HH\times\CC\), this integral formula shows
that every \(c_r(\tau)\) is holomorphic in \(\tau\). The local uniform
convergence also justifies the coefficient comparisons below.

We apply the second identity in \eqref{eq:index-m-elliptic}. On the one hand,
\[
F(\tau,z+\tau)=\sum_{r\in\ZZ} c_r(\tau)q^r e^{2\pi i r z}.
\]
On the other hand, we have
\[
q^{-m}e^{-4\pi i m z}F(\tau,z)
=
q^{-m}\sum_{r\in\ZZ} c_r(\tau)e^{2\pi i (r-2m)z}
=
\sum_{r\in\ZZ} q^{-m}c_{r+2m}(\tau)e^{2\pi i r z}.
\]
Comparing coefficients of \(e^{2\pi i r z}\) gives
\begin{equation}\label{eq:coeff-recurrence}
c_{r+2m}(\tau)=q^{r+m}c_r(\tau)
\qquad (r\in\ZZ).
\end{equation}
Hence the quantity \(q^{-r^2/4m}c_r(\tau)\) depends only on the residue class
of \(r\) modulo \(2m\), because
\[
q^{-(r+2m)^2/4m}c_{r+2m}(\tau)
=
q^{-r^2/4m-r-m}q^{r+m}c_r(\tau)
=
q^{-r^2/4m}c_r(\tau).
\]
For each residue class \(\ell\pmod{2m}\) define
\[
h_\ell(\tau):=q^{-\ell^2/4m}c_\ell(\tau).
\]
These functions are holomorphic in \(\tau\). Iterating \eqref{eq:coeff-recurrence} shows that for every integer \(n\),
\[
c_{\ell+2mn}(\tau)=h_\ell(\tau)q^{(\ell+2mn)^2/4m}.
\]
Substituting this into the Fourier expansion of \(F\) yields exactly \eqref{eq:theta-decomp-formula}. Uniqueness is immediate from the uniqueness of the Fourier expansion.
\end{proof}

\subsection{Appell--Lerch kernels}

We define the Appell--Lerch kernels needed in this work; such sums and their elliptic and modular properties are studied systematically in \cite{DabholkarMurthyZagier,BFOR}.
Let $m\geq 1$ and let $\beta\in\QQ$. Set $u:=e^{2\pi i z}$. Define
\begin{equation}\label{eq:appell-constant}
A_{m,\beta}(\tau,z)
:=
-\sum_{n\in\ZZ}
\frac{q^{mn^2}u^{2mn}}{1-e^{-2\pi i\beta}q^n u}.
\end{equation}
The next proposition records the basic analytic properties of these kernels.
They are meromorphic functions with precisely controlled simple poles, and their
elliptic transformation law is exactly the one needed later to cancel the polar
terms of the elliptically corrected Dyson functions.

\begin{proposition}\label{prop:appell-constant}
Let $m\geq 1$ and $\beta\in\QQ$.

\begin{enumerate}[label=\textnormal{(\arabic*)}]
\item The series \eqref{eq:appell-constant} converges locally uniformly on compact subsets of $\HH\times\CC$ away from its poles and therefore defines a meromorphic function.

\item One has the elliptic transformation law
\begin{equation}\label{eq:appell-elliptic}
A_{m,\beta}(\tau,z+1)=A_{m,\beta}(\tau,z),
\qquad
A_{m,\beta}(\tau,z+\tau)=q^{-m}u^{-2m}A_{m,\beta}(\tau,z).
\end{equation}

\item The poles are simple and lie exactly on the lattice orbit
\[
z\equiv \beta \pmod{\ZZ\tau+\ZZ}.
\]
More precisely,
\begin{equation}\label{eq:appell-local}
A_{m,\beta}(\tau,\beta+\varepsilon)
=
\frac1{2\pi i\,\varepsilon}+\OO(1)
\qquad (\varepsilon\to 0).
\end{equation}

\end{enumerate}
\end{proposition}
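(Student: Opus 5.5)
We prove the three parts of the proposition in order, working directly with the series \eqref{eq:appell-constant}. For (1), fix a compact set $K\subset\HH\times\CC$. On $K$ we have $\operatorname{Im}\tau\geq c>0$ and $|\operatorname{Im} z|\leq M$, so $|q|\leq e^{-2\pi c}$ and $|u|^{\pm1}\leq e^{2\pi M}$. For $|n|$ large, the denominator $1-e^{-2\pi i\beta}q^nu$ is bounded away from $0$. Indeed, for $n\to+\infty$ it tends to $1$. For $n\to-\infty$ we have $|q^nu|\to\infty$, and then
\[
\frac{1}{|1-e^{-2\pi i\beta}q^nu|}\leq \frac{2}{|q|^{n}|u|}.
\]
In either case the $n$-th term is therefore bounded by $C\,|q|^{mn^2-|n|}e^{4\pi m M|n|}$, and this is summable because of the Gaussian factor $|q|^{mn^2}$. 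The finitely many remaining terms are meromorphic, with poles only where $q^nu=e^{2\pi i\beta}$. The Weierstrass M-test then gives locally uniform convergence away from these points, and hence a meromorphic function.

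For (2), invariance under $z\mapsto z+1$ is immediate, since every term depends on $z$ only through $u$. For $z\mapsto z+\tau$ we replace $u$ by $qu$ and reindex with $n\mapsto n-1$. The denominator becomes $1-e^{-2\pi i\beta}q^{n}u$ again. In the numerator,
\[
q^{m(n-1)^2}(qu)^{2m(n-1)}=q^{mn^2-m}u^{2mn-2m}.
\]
Since the series is bilateral and absolutely convergent, the shifted sum is exactly $q^{-m}u^{-2m}A_{m,\beta}(\tau,z)$. This is the reason the sum runs over all of $\ZZ$, unlike the one-sided kernels of the introduction.

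For (3), a pole can occur only where $q^nu=e^{2\pi i\beta}$, that is, at $z=\beta-n\tau+k$ with $k\in\ZZ$. This is the orbit $\beta+\ZZ\tau+\ZZ$. At each such point exactly one term is singular, because distinct $n$ give distinct points in the $\tau$-direction, so the pole is at most simple. By (2), the polar behaviour along the orbit is determined by that at $z=\beta$, up to the nonvanishing multiplier. At $z=\beta+\varepsilon$ only the $n=0$ term is singular, and it equals
\[
-\frac{1}{1-e^{2\pi i\varepsilon}}=\frac{1}{2\pi i\varepsilon}+\OO(1).
\]
The remaining terms are holomorphic near $\varepsilon=0$ by (1). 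This gives \eqref{eq:appell-local}, and in particular the residue is nonzero, so the poles are exactly on the orbit.

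The only step needing care is the uniform bound in (1) for negative $n$. There one must check that the growth $|u|^{2m|n|}$ and the denominator behaviour are dominated by $|q|^{mn^2}$, and the estimate above settles this. Everything else is bookkeeping.
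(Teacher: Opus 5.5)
Your proof is correct and follows essentially the same route as the paper. Both arguments use a Gaussian bound on the terms, with the $n<0$ terms controlled by the large denominator, then the reindexing $n\mapsto n-1$ for the $\tau$-shift, and finally isolate the single singular $n=0$ term near $z=\beta$; your splitting into finitely many terms plus a uniformly convergent holomorphic tail gives meromorphy a little more cleanly than the paper's restriction to compacta avoiding the poles, but the substance is the same.
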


\begin{proof}
Let $K\subset \HH\times\CC$ be a compact set avoiding the poles. Then there is a constant $M>0$ such that $|\Im(z)|\leq M$ on $K$.

For $n\geq 0$, the denominator in \eqref{eq:appell-constant} is bounded away from $0$ uniformly on $K$, and the numerator satisfies
\[
\bigl|q^{mn^2}u^{2mn}\bigr|
=
\exp\bigl(-2\pi m n^2\Im(\tau)-4\pi mn\Im(z)\bigr)
\leq
\exp\bigl(-2\pi m n^2\Im(\tau)+4\pi mnM\bigr),
\]
which decays like $\exp(-c n^2)$ for some $c>0$.

For $n<0$ write $n=-r$ with $r\geq 1$. Since the compact set avoids the poles, the denominator is again bounded away from $0$ after dividing numerator and denominator by $q^{-r}u$:
\[
\frac{q^{mr^2}u^{-2mr}}{1-e^{-2\pi i\beta}q^{-r}u}
=
-\,e^{2\pi i\beta}q^{mr^2+r}u^{-2mr-1}\Bigl(1+\OO(|q|^r)\Bigr).
\]
The right-hand side also decays like $\exp(-c r^2)$. Thus \eqref{eq:appell-constant} converges normally on compacta away from its poles.

The identity $A_{m,\beta}(\tau,z+1)=A_{m,\beta}(\tau,z)$ is immediate because $u$ is unchanged by $z\mapsto z+1$. For the second transformation law,
\[
A_{m,\beta}(\tau,z+\tau)
=
-\sum_{n\in\ZZ}\frac{q^{mn^2}(qu)^{2mn}}{1-e^{-2\pi i\beta}q^{n+1}u}.
\]
Set $r:=n+1$. Since
\[
mn^2+2mn=m(r-1)^2+2m(r-1)=mr^2-m,
\]
and $u^{2m(r-1)}=u^{2mr-2m}$, we obtain
\[
A_{m,\beta}(\tau,z+\tau)
=
q^{-m}u^{-2m}
\left(-\sum_{r\in\ZZ}\frac{q^{mr^2}u^{2mr}}{1-e^{-2\pi i\beta}q^ru}\right)
=
q^{-m}u^{-2m}A_{m,\beta}(\tau,z).
\]
This proves \eqref{eq:appell-elliptic}.

The denominator in the $n$-th summand vanishes exactly when
$e^{-2\pi i\beta}q^n u=1$,
that is,
\[
z\equiv \beta-n\tau \pmod{\ZZ}.
\]
As $n$ runs through $\ZZ$, these are exactly the points on the orbit $\beta+\ZZ\tau+\ZZ$. Near $z=\beta$, only the term $n=0$ is singular, and it contributes
\[
-\frac{1}{1-e^{2\pi i(z-\beta)}}
=
\frac{1}{2\pi i(z-\beta)}+\OO(1),
\]
which proves \eqref{eq:appell-local}.
\end{proof}

We now record how to subtract the polar part to obtain a holomorphic object.

\begin{proposition}%[Local polar subtraction for simple poles]
\label{prop:local-polar-subtraction}
Let \(m\geq 1\), and let \(H(\tau,z)\) be jointly meromorphic on
\(\HH\times\CC\). Suppose that \(H\) satisfies the elliptic law
\eqref{eq:index-m-elliptic}, and that all possible poles of \(H(\tau,\cdot)\)
are at most simple and lie on finitely many constant torsion orbits represented
by rational numbers \(\beta\in S\subset\QQ/\ZZ\).

For each \(\beta\in S\), write the Laurent expansion in the form
\[
H(\tau,\beta+\varepsilon)
=
\frac{c_{\beta}(\tau)}{2\pi i\,\varepsilon}+\OO(1)
\qquad (\varepsilon\to 0).
\]
Then each residue coefficient \(c_\beta(\tau)\) is holomorphic in \(\tau\).
Define
\[
H^{P,\mathrm{loc}}(\tau,z)
:=
\sum_{\beta\in S}c_{\beta}(\tau)A_{m,\beta}(\tau,z).
\]
Then \(H-H^{P,\mathrm{loc}}\) is holomorphic on \(\HH\times\CC\) and satisfies
the same elliptic law \eqref{eq:index-m-elliptic}. Consequently there are
unique holomorphic coefficient functions \(h_\ell(\tau)\) \((\ell\pmod{2m})\)
such that
\begin{equation}\label{eq:local-finite-theta}
H(\tau,z)-H^{P,\mathrm{loc}}(\tau,z)
=
\sum_{\ell\, (\mathrm{mod}\,2m)} h_\ell(\tau)\vartheta_{m,\ell}(\tau,z).
\end{equation}
\end{proposition}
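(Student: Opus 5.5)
The plan is to prove the three assertions in the order stated: holomorphy of the residue coefficients, holomorphy and ellipticity of the difference, and then the theta decomposition via Lemma~\ref{lem:theta-decomposition}.

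\textbf{Holomorphy of $c_\beta$.} Fix \(\beta\in S\). Since the possible poles of \(H(\tau,\cdot)\) lie on the finitely many constant orbits \(S+\ZZ\tau+\ZZ\), the following holds for \(\tau\) in a small disc \(U\): there is a radius \(\rho>0\), uniform in \(\tau\in U\), such that the punctured disc \(0<|\varepsilon|\le\rho\) around \(z=\beta\) meets no other point of the pole set. Indeed, the orbit points depend continuously on \(\tau\), and the only one equal to \(\beta\) is \(\beta\) itself; here I use that a point \(\beta'+a\tau+b\) with \(a\neq0\) stays away from \(\beta\) because \(\Im\tau\) is bounded below on \(U\). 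Then
\[
c_\beta(\tau)=\oint_{|\varepsilon|=\rho}H(\tau,\beta+\varepsilon)\,d\varepsilon .
\]
The integrand is jointly continuous and holomorphic in \(\tau\) on the contour, so differentiating under the integral sign (or applying Morera) shows \(c_\beta\) is holomorphic on \(U\), hence on \(\HH\).

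\textbf{The difference is holomorphic and elliptic.} By Proposition~\ref{prop:appell-constant}(2), each \(A_{m,\beta}\) satisfies \eqref{eq:index-m-elliptic}. The coefficients \(c_\beta(\tau)\) do not depend on \(z\), so \(H^{P,\mathrm{loc}}\), and hence \(H-H^{P,\mathrm{loc}}\), satisfy the same law. For holomorphy, consider first a neighbourhood of \(z=\beta\). There the function \(A_{\beta'}\) with \(\beta'\neq\beta\) in \(\QQ/\ZZ\) is holomorphic, since its poles lie only on \(\beta'+\ZZ\tau+\ZZ\), which is disjoint from \(\beta+\ZZ\tau+\ZZ\) because \(\beta,\beta'\) are real and distinct mod \(1\). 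By \eqref{eq:appell-local}, the term \(c_\beta A_{m,\beta}\) has exactly the principal part \(c_\beta/(2\pi i\varepsilon)\). So the simple pole cancels, and the singularity of \(H-H^{P,\mathrm{loc}}\) at \(\beta\) is removable. The other points of the orbit \(\beta+k\tau+l\) are handled by ellipticity: both \(H-H^{P,\mathrm{loc}}\) and its translates differ only by the nowhere-vanishing multiplier \(q^{-mk^2}u^{-2mk}\) (iterating \eqref{eq:index-m-elliptic}), so removability propagates.

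To get joint holomorphy on \(\HH\times\CC\), I would use that the difference is jointly meromorphic and holomorphic in \(z\) for each fixed \(\tau\). On a product neighbourhood, its values equal a Cauchy integral over a circle in \(z\) avoiding the orbits, and this integral is jointly holomorphic. This is the step I expect to be the main (though mild) obstacle: making sure that removability in each slice upgrades to joint holomorphy. The Cauchy-integral representation with a uniform contour, chosen as in the first step, settles it.

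\textbf{Theta decomposition.} The function \(H-H^{P,\mathrm{loc}}\) is holomorphic on \(\HH\times\CC\) and satisfies \eqref{eq:index-m-elliptic}, so Lemma~\ref{lem:theta-decomposition} applies directly and yields unique holomorphic \(h_\ell(\tau)\) with \eqref{eq:local-finite-theta}.
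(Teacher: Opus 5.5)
Your proposal is correct and follows the same route as the paper's proof. You show that $c_\beta$ is holomorphic, cancel the principal parts using \eqref{eq:appell-local} together with the disjointness of the other orbits, inherit the elliptic law from the kernels $A_{m,\beta}$, and conclude with Lemma~\ref{lem:theta-decomposition}. Your Cauchy-integral formula for $c_\beta$, the explicit propagation along the orbit by ellipticity, and the upgrade from slice-wise to joint holomorphy make precise what the paper compresses into ``$2\pi i\varepsilon H$ extends holomorphically'' and ``the cancellation is local in $(\tau,z)$.''
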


\begin{proof}
Because the possible pole locations are fixed torsion points and the poles are
at most simple, the function
\[
2\pi i\,\varepsilon\,H(\tau,\beta+\varepsilon)
\]
has a holomorphic extension to \(\varepsilon=0\) in a neighborhood of every
\(\tau\in\HH\). Its value at \(\varepsilon=0\) is \(c_\beta(\tau)\), so
\(c_\beta\) is holomorphic in \(\tau\). If the possible pole cancels, this
value is zero.

Fix \(\beta\in S\). By Proposition~\ref{prop:appell-constant}, the local
expansion of \(c_\beta(\tau)A_{m,\beta}(\tau,\beta+\varepsilon)\) is
\[
\frac{c_\beta(\tau)}{2\pi i\,\varepsilon}+\OO(1).
\]
Terms associated to other orbit representatives are holomorphic at
\(z=\beta\). Thus \(H\) and \(H^{P,\mathrm{loc}}\) have the same principal
part at every possible pole, and \(H-H^{P,\mathrm{loc}}\) is holomorphic after
removing these singularities. Since the cancellation is local in
\((\tau,z)\), the result is holomorphic on \(\HH\times\CC\).

Each kernel \(A_{m,\beta}\) satisfies the elliptic law \eqref{eq:index-m-elliptic}, and hence so does \(H^{P,\mathrm{loc}}\). Therefore \(H-H^{P,\mathrm{loc}}\) also satisfies \eqref{eq:index-m-elliptic}. Lemma~\ref{lem:theta-decomposition} gives \eqref{eq:local-finite-theta} with holomorphic coefficient functions.
\end{proof}

\subsection{A certain theta quotient}

We now record the explicit theta quotient that has the same divisor as our main object of study.
To make this precise, we
let
\[
\eta(\tau):=q^{1/24}(q;q)_\infty
\]
be Dedekind's eta function, and let
\[
\vartheta_1(\tau,z)
:=
iq^{1/8}e^{-\pi i z}
(e^{2\pi i z};q)_\infty
(qe^{-2\pi i z};q)_\infty
(q;q)_\infty .
\]
Furthermore, we let \(\Gamma(N)\) denote the principal congruence subgroup of level \(N\).

\begin{proposition}\label{prop:jacobi-carrier}
Let $m\geq 1$ and set $d:=2m+1$. Define
\[
\Gamma_d^{\sharp}:=\Gamma(24d^2).
\]
Let
\[
\vartheta_1(\tau,z):=i q^{1/8}e^{-\pi i z}(e^{2\pi i z};q)_\infty(qe^{-2\pi i z};q)_\infty(q;q)_\infty,
\]
and write $u:=e^{2\pi i z}$. Set
\begin{equation}\label{eq:jacobi-carrier}
\mathcal J_{m,d}(\tau,z)
:=
\eta(\tau)^{2-4m}\frac{\vartheta_1(\tau,z)^{4m+1}}{\vartheta_1(d\tau,dz)}.
\end{equation}
Then $\mathcal J_{m,d}$ is a meromorphic Jacobi form of weight $1$ and index $m$ on $\Gamma_d^{\sharp}$. Its possible poles lie on the \(d\)-torsion orbits
\[
z\equiv -\frac r d \pmod{\mathbb Z\tau+\mathbb Z}\qquad (0\le r\le d-1).
\]
Its actual poles occur precisely on the nonzero \(d\)-torsion orbits.
On the nonzero \(d\)-torsion orbits, \(\mathcal J_{m,d}\) has simple poles, while at the zero orbit the numerator cancels the denominator pole and \(\mathcal J_{m,d}\) has a zero of order \(4m\). Moreover,
\begin{equation}\label{eq:carrier-covering}
\mathcal J_{m,d}(\tau,z)
=
q^{(m+1)/12}(q;q)_\infty^{2-4m}u^{-m}\frac{J(u;q)^{4m+1}}{J(u^d;q^d)},
\end{equation}
where
\[
J(u;q):=(u;q)_\infty(q/u;q)_\infty(q;q)_\infty.
\]
\end{proposition}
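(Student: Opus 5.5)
The proof will go through the product formula \eqref{eq:carrier-covering} first and derive everything else from it.

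\emph{Step 1: the product formula.} Substitute the definitions. The numerator gives
\[
\vartheta_1(\tau,z)^{4m+1}=i^{4m+1}q^{(4m+1)/8}u^{-(4m+1)/2}J(u;q)^{4m+1}.
\]
The denominator gives
\[
\vartheta_1(d\tau,dz)=i\,q^{d/8}u^{-d/2}J(u^d;q^d).
\]
The eta factor gives \(\eta^{2-4m}=q^{(2-4m)/24}(q;q)_\infty^{2-4m}\). Since \(i^{4m}=1\), the result is \(u^{-(4m+1-d)/2}=u^{-m}\). The total \(q\)-exponent is
\[
\frac{4m+1-(2m+1)}{8}+\frac{2-4m}{24}=\frac{6m+2-4m}{24}=\frac{m+1}{12}.
\]
This proves \eqref{eq:carrier-covering}.

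\emph{Step 2: divisor and elliptic law.} Read the divisor off the product formula. \(J(u;q)\) has simple zeros exactly at \(z\in\ZZ\tau+\ZZ\). \(J(u^d;q^d)\) has simple zeros exactly where \(u^d\in q^{d\ZZ}\), that is, at \(z\in\frac1d(\ZZ\tau+\ZZ)\) (taken modulo \(\ZZ\tau+\ZZ\)). So the candidate poles are the \(d^2\) points of \(d\)-torsion.

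I have to reconcile this with the orbits listed in the statement, \(z\equiv -r/d\). Those are only the \(d\) real torsion points. The points \(z=(a\tau+b)/d\) with \(a\not\equiv0\) are also zeros of \(J(u^d;q^d)\). This is the step I expect to be the main obstacle. I would check carefully whether the statement intends "torsion orbits" to include all of \(\frac1d(\ZZ\tau+\ZZ)\). Failing that, I would check whether the intended denominator is \(J(u^d;q^d)\) restricted to the \(u^d=1\) factor, namely \((u^d;q^d)_\infty\)-type. With the definition as written, poles occur at all nonzero \(d\)-torsion points, and I would state it that way: simple poles at nonzero \(d\)-torsion and cancellation at \(0\). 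At the zero orbit, the numerator vanishes to order \(4m+1\) and the denominator to order \(1\), leaving a zero of order \(4m\).

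For the elliptic law, use \(J(qu;q)=-u^{-1}J(u;q)\) and
\[
J(q^du^d;q^d)=-u^{-d}J(u^d;q^d).
\]
Together with the factor \(u^{-m}\mapsto q^{-m}u^{-m}\), the total multiplier is
\[
q^{-m}(-1)^{4m+1-1}u^{-(4m+1)+d}=q^{-m}u^{-2m}.
\]
Invariance under \(z\mapsto z+1\) is clear from the theta form, since \(\vartheta_1(\tau,z+1)=-\vartheta_1(\tau,z)\) and the signs cancel (the exponent \(4m+1\) is odd and the denominator also flips by \(-1\)).

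\emph{Step 3: modularity.} Use the standard transformation of \(\vartheta_1\) as a Jacobi form of weight \(1/2\) and index \(1/2\) on \(\mathrm{SL}_2(\ZZ)\), with an eta-type multiplier \(v_\eta^3\). Use \(\eta\) of weight \(1/2\) with multiplier \(v_\eta\). For \(\gamma=\begin{pmatrix}a&b\\c&d'\end{pmatrix}\in\Gamma(24d^2)\), the map \((\tau,z)\mapsto(d\tau,dz)\) conjugates \(\gamma\) to \(\begin{pmatrix}a&db\\c/d&d'\end{pmatrix}\in\mathrm{SL}_2(\ZZ)\), and the factor \(dz\) carries index \(d^2/2\). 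So \(\vartheta_1(d\tau,dz)\) has weight \(1/2\) and index \(d^2/2\).

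The weights then total
\[
\tfrac{2-4m}{2}+\tfrac{4m+1}{2}-\tfrac12=1.
\]
The indices total
\[
\tfrac{4m+1}{2}-\tfrac{d^2}{2}\ne m.
\]
So the modular index from the \(dz\) scaling is \((4m+1-d^2)/2\), which conflicts with the elliptic computation in Step 2. I would resolve this by rechecking the elliptic law of \(\vartheta_1(d\tau,dz)\). Under \(z\mapsto z+\tau\) we have \(dz\mapsto dz+d\tau\), which is a single period shift in the lattice of \(d\tau\). That gives index \(1/2\) relative to the elliptic variable paired with \(d\tau\), i.e. \(d/2\) in \(z\). This matches Step 2, giving index \((4m+1-d)/2=m\). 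So the correct reading is that \(\vartheta_1(d\tau,dz)\) is a Jacobi form of index \(d/2\) for the congruence subgroup, via the standard \(\tau\mapsto d\tau\), \(z\mapsto dz\) operator (which multiplies the index by \(d\)).

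The multipliers are all 24th roots of unity given by Dedekind sums. On \(\Gamma(24d^2)\) they become trivial, because the \(\eta\)-multiplier is trivial on \(\Gamma(24)\) and the conjugated matrix still lies in \(\Gamma(24)\). This requires the level \(24d^2\). I would cite the standard eta-multiplier formula rather than recompute it.
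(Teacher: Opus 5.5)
Your Step~1 and your elliptic-law computation are correct and agree with the paper. The genuine gap is in Step~2, where you locate the zeros of the denominator. You correctly reduce to $u^d\in q^{d\ZZ}$, but you then convert this to $z\in\frac1d(\ZZ\tau+\ZZ)$. That set is the zero set of $J(u^d;q)$, not of $J(u^d;q^d)$. In fact $u^d=q^{dn}$ means $dz\in dn\tau+\ZZ$, i.e.\ $z\in n\tau+\frac1d\ZZ$; equivalently, $\vartheta_1(d\tau,dz)=0$ iff $dz\in\ZZ(d\tau)+\ZZ$, which is how the paper phrases it. Modulo $\ZZ\tau+\ZZ$ this gives exactly the $d$ classes $z\equiv -r/d$ listed in the statement. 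The remaining $d^2-d$ torsion points are not zeros of the denominator. For example, at $z=\tau/d$ one has $J(q;q^d)=(q,q^{d-1},q^d;q^d)_\infty\neq0$, and $\mathcal J_{m,d}$ is holomorphic and nonzero there. So the conclusion you propose to adopt, simple poles at all nonzero $d$-torsion points, is false, and there is no need to reinterpret the statement or its denominator. Once the zero set is corrected, your divisor argument completes the proof. $J(u;q)^{4m+1}$ does not vanish off $\ZZ\tau+\ZZ$, and $J(u^d;q^d)$ has a simple zero at each class $-r/d$ because $z\mapsto u^d$ is locally biholomorphic. This gives simple poles for $r\not\equiv0$ and a zero of order $(4m+1)-1=4m$ on the zero orbit.

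In Step~3 your final index is right, but the way you resolve the conflict is not an argument: the elliptic law does not determine the modular factor. Your first count of $d^2/2$ forgot that the conjugated matrix $\gamma^{(d)}$ has lower-left entry $c/d$. The quadratic exponential for $\vartheta_1(d\tau,dz)$ is therefore $e^{\pi i (c/d)(dz)^2/(c\tau+d_0)}=e^{\pi i c d z^2/(c\tau+d_0)}$. This is the paper's computation, and it gives index $\frac{(4m+1)-d}{2}=m$ directly.

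One further caution, which applies equally to the paper's wording: the weight-$\tfrac12$ multiplier of $\eta$ is not literally trivial on $\Gamma(24)$. There it reduces to a Jacobi symbol in $(c,d_0)$; for instance it equals $-1$ at $\left(\begin{smallmatrix}1657&1008\\120&73\end{smallmatrix}\right)\in\Gamma(24)$. The weight-one quotient is nonetheless fine. $\eta^{2-4m}$ contributes an even power of that symbol, while $\vartheta_1(\tau,z)^{4m+1}$ and $\vartheta_1(d\tau,dz)^{-1}$ together contribute $\bigl(\frac{c}{|d_0|}\bigr)\bigl(\frac{c/d}{|d_0|}\bigr)=\bigl(\frac{d}{|d_0|}\bigr)$. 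This equals $1$ for $d_0\equiv1\pmod{24d^2}$ by quadratic reciprocity.
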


\begin{proof}
The Jacobi triple product gives
\[
\vartheta_1(\tau,z)=i q^{1/8}u^{-1/2}J(u;q),
\]
so \eqref{eq:carrier-covering} follows immediately from \eqref{eq:jacobi-carrier}.

The elliptic transformations are
\[
\vartheta_1(\tau,z+1)=-\vartheta_1(\tau,z),
\qquad
\vartheta_1(\tau,z+\tau)=-q^{-1/2}u^{-1}\vartheta_1(\tau,z),
\]
and similarly
\[
\vartheta_1(d\tau,dz+d)=-\vartheta_1(d\tau,dz),
\qquad
\vartheta_1(d\tau,dz+d\tau)=-q^{-d/2}u^{-d}\vartheta_1(d\tau,dz).
\]
One obtains
\[
\mathcal J_{m,d}(\tau,z+1)=\mathcal J_{m,d}(\tau,z),
\qquad
\mathcal J_{m,d}(\tau,z+\tau)=q^{-m}u^{-2m}\mathcal J_{m,d}(\tau,z).
\]
Thus the elliptic law is that of index $m$.

For the modular law, let
\[
\gamma=\begin{pmatrix}a&b\\ c&d_0\end{pmatrix}\in \Gamma_d^{\sharp}.
\]
Since $c$ is divisible by $d$ (recall $d=2m+1$ is the level, distinct from the matrix entry $d_0$), the matrix
\[
\gamma^{(d)}:=\begin{pmatrix}a&bd\\ c/d&d_0\end{pmatrix}
\in SL_2(\ZZ)
\]
is well defined. Both $\gamma$ and $\gamma^{(d)}$ belong to $\Gamma(24)$. This is the reason for choosing the deliberately strong subgroup $\Gamma(24d^2)$: it kills both the eta multiplier and the odd-theta multiplier under the transformations used here; see \cite[Chapter~1]{EichlerZagier}. More explicitly, on this subgroup the standard transformation laws for $\eta$ and $\vartheta_1$ have trivial root-of-unity multipliers, so the only remaining factors are the indicated powers of $c\tau+d_0$ and the usual quadratic exponential. Therefore
\[
\eta(\gamma\tau)=(c\tau+d_0)^{1/2}\eta(\tau),
\]
\[
\vartheta_1\!\left(\gamma\tau,\frac{z}{c\tau+d_0}\right)
=
(c\tau+d_0)^{1/2}e^{\pi i c z^2/(c\tau+d_0)}\vartheta_1(\tau,z),
\]
and, because $d\gamma\tau=\gamma^{(d)}(d\tau)$,
\[
\vartheta_1\!\left(d\gamma\tau,\frac{dz}{c\tau+d_0}\right)
=
(c\tau+d_0)^{1/2}e^{\pi i c d z^2/(c\tau+d_0)}\vartheta_1(d\tau,dz).
\]
Combining these identities yields
\[
\mathcal J_{m,d}\!\left(\gamma\tau,\frac{z}{c\tau+d_0}\right)
=
(c\tau+d_0)e^{\pi i ((4m+1)-d)c z^2/(c\tau+d_0)}\mathcal J_{m,d}(\tau,z).
\]
Since $(4m+1)-d=2m$, this is exactly the modular law of weight $1$ and index $m$.

Finally, the denominator \(\vartheta_1(d\tau,dz)\) vanishes exactly when
\(dz\in \mathbb Z(d\tau)+\mathbb Z\), which is equivalent to
\[
z\equiv -\frac{r}{d}\pmod{\mathbb Z\tau+\mathbb Z}
\qquad (0\le r\le d-1).
\]
The numerator vanishes on the lattice orbit \(z\in\mathbb Z\tau+\mathbb Z\), and hence cancels the denominator zero on the zero orbit. Therefore the actual poles are simple and occur precisely on the nonzero \(d\)-torsion orbits.
\end{proof}

We will also need a small analytic justification for summing meromorphic
kernels term by term. The following elementary lemma ensures that, under normal
convergence away from the poles and normal convergence of the local principal
parts, the resulting sum is again meromorphic and has exactly the expected
principal parts.

\begin{lemma}%[Normal summation of simple principal parts]
\label{lem:normal-simple-poles}
Let \(U\subset\mathbb C\) be open, and let \(P\subset U\) be a discrete set.
Suppose that \(f_n\) is meromorphic on \(U\), that every pole of every \(f_n\) lies in \(P\), and that each such pole is at most simple. Assume that
\(\sum_n f_n\) converges normally on compact subsets of \(U\setminus P\).
Assume moreover that for every \(p\in P\) there is a disk \(D_p\Subset U\), with
\(D_p\cap P=\{p\}\), such that the holomorphic functions
\[
        (z-p)f_n(z)
\]
on \(D_p\setminus\{p\}\) extend holomorphically to \(D_p\), and the extended
series
\[
        \sum_n (z-p)f_n(z)
\]
converges normally on \(D_p\). Then \(\sum_n f_n\) defines a meromorphic
function on \(U\) whose poles are at most simple and contained in \(P\). At
\(p\), its principal part is the normally convergent sum of the principal parts
of the \(f_n\) at \(p\).
\end{lemma}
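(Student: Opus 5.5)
The plan is the standard Weierstrass-type argument: work locally around each point of \(P\) and away from \(P\) separately, then glue.

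\emph{Step 1 (away from \(P\)).} On \(U\setminus P\) every \(f_n\) is holomorphic. Normal convergence on compact subsets gives locally uniform convergence, so by Weierstrass' theorem \(F:=\sum_n f_n\) is holomorphic on \(U\setminus P\). Since \(P\) is discrete, it remains only to understand \(F\) near each \(p\in P\).

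\emph{Step 2 (near a fixed \(p\)).} Fix \(p\in P\) and its disk \(D_p\). Put \(g_n(z):=(z-p)f_n(z)\), extended holomorphically to \(D_p\); this is possible because the pole of \(f_n\) at \(p\) is at most simple. By hypothesis \(\sum_n g_n\) converges normally on \(D_p\), so \(g:=\sum_n g_n\) is holomorphic on \(D_p\), again by Weierstrass. On \(D_p\setminus\{p\}\) we have \(F(z)=g(z)/(z-p)\). Here one uses that \(D_p\cap P=\{p\}\), so \(D_p\setminus\{p\}\subset U\setminus P\) and both series converge there to the same pointwise sum. Hence \(F\) extends meromorphically to \(D_p\) with at most a simple pole at \(p\), and its principal part is \(g(p)/(z-p)\).

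\emph{Step 3 (identifying the principal part).} The principal part of \(f_n\) at \(p\) is \(g_n(p)/(z-p)\). Evaluation at \(p\) is dominated by the sup norm on \(D_p\), so \(\sum_n |g_n(p)|<\infty\). Therefore \(g(p)=\sum_n g_n(p)\) is a normally convergent sum of the residues, which proves the last assertion.

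\emph{Step 4 (gluing).} Combining Steps 1 and 2 over all \(p\in P\) shows that \(F\) is meromorphic on \(U\) with at most simple poles, all contained in \(P\).

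The only delicate point is in Step 2: the two descriptions of \(F\) must agree on the punctured disk, and the normality hypothesis is exactly what ensures this. I do not expect a real obstacle here; the statement is essentially a bookkeeping version of Weierstrass' convergence theorem.
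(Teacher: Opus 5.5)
Your proposal is correct and follows the paper's proof essentially step for step: Weierstrass on \(U\setminus P\), holomorphy of \(g=\sum_n (z-p)f_n\) on \(D_p\) from normal convergence, and reading off the principal part \(g(p)/(z-p)\) by evaluating the normally convergent series at \(z=p\). One small correction to your closing remark: the identity \(F=g/(z-p)\) on \(D_p\setminus\{p\}\) holds simply by pointwise convergence, and normality is what you actually need to get holomorphy of \(g\) at \(p\).
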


\begin{proof}
On \(U\setminus P\) this is just the Weierstrass theorem for normally
convergent series of holomorphic functions. Fix \(p\in P\). By hypothesis,
\[
        (z-p)\sum_n f_n(z)=\sum_n (z-p)f_n(z)
\]
converges normally on \(D_p\) after holomorphic extension of the summands.
Hence \((z-p)\sum_n f_n(z)\) is holomorphic on \(D_p\). Therefore
\(\sum_n f_n\) has at most a simple pole at \(p\), and the principal-part
coefficient is obtained by evaluating the last normally convergent series at
\(z=p\).
\end{proof}

\section{The elliptic lift}\label{sec:corrected-lift}

Throughout this section we fix $m\geq 2$ and write
\[
d:=2m+1
\qquad {\text {\rm and}}\qquad
\omega:=e^{2\pi i/d}.
\]
The results in this section give the explicit construction summarized in
Theorems~\ref{thm:intro-defect}--\ref{thm:intro-finite-part}. The two cases \(x^d\neq1\) and
\(x^d=1\) have slightly different partial-fraction data, so they are treated
separately.

\subsection[The case of a root of unity with x to the d not equal to 1]{The case of a root of unity with \texorpdfstring{$x^d\neq 1$}{x to the d not equal to 1}}\label{sec:rootofunitynot1}

Let $x$ be a root of unity satisfying $x^d\neq 1$. Write
\[
C_x:=1+x+x^2+\cdots+x^{d-1}.
\]
Since $x^d\neq 1$, the geometric-series identity gives
\[
C_x=\frac{1-x^d}{1-x}\neq 0.
\]
We now prove the complete structure theorem in this case.
Throughout, assume that $x$ is a root of unity and that $x^d\neq 1$. Set
\[
\mathcal G_{d,x}(u;q):=x^{-2}\mathcal F_{d,x}(xu;q).
\]
For brevity we write (only in the proofs)
\[
X_j:=X_j^{(m)}(x;q),
\qquad
G:=G_d(x;q),
\qquad
\mathcal F_x(t):=\mathcal F_{d,x}(t;q),
\qquad
\mathcal G(u):=\mathcal G_{d,x}(u;q).
\]

\subsubsection{The $q$-difference equations}
To normalize the $q$-difference equations we introduce the infinite product
\[
M_d(u;q):=\frac{(u^d;q^d)_\infty}{(u;q)_\infty J(u;q)^{d-3}},
\]
and define the normalized generating series
\[
K_{d,x}(u;q):=\frac{M_d(u;q)}{u}\mathcal G_{d,x}(u;q).
\]

\begin{theorem}\label{thm:general-root:iplusii}
There exists a unique polynomial
\[
R_{d,x}(u;q)\in q^{-1}\CC[[q]][u]
\]
of degree at most $d-2$, with
\[
R_{d,x}(0;q)=-X_0^{(m)}(x;q),
\]
such that
\begin{equation}\label{eq:main-functional-general}
\mathcal G_{d,x}(qu;q)
=
x^2 q\frac{u^d-1}{u^{d-3}(u-1)}\mathcal G_{d,x}(u;q)
+
\frac{q}{u^{d-3}}R_{d,x}(u;q)
-
C_x q\frac{u^2}{1-xu}G_d(x;q).
\end{equation}
For
\begin{equation}\label{eq:def-T-general}
T_{d,x}(u;q):=(1-xu)(1-u)R_{d,x}(u;q)-C_xu^{d-1}(1-u)G_d(x;q),
\end{equation}
we have
\begin{equation}\label{eq:defect-general}
K_{d,x}(qu;q)-x^2K_{d,x}(u;q)
=
\frac{M_d(u;q)}{u(1-u^d)(1-xu)}T_{d,x}(u;q).
\end{equation}
Moreover $T_{d,x}(u;q)$ is a polynomial in $u$ of degree at most $d$.
\end{theorem}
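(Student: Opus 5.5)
The plan is to turn the recurrence of Proposition~\ref{prop:recurrence} into a $q$-difference equation for $\mathcal F_x(t)$, substitute $t=xu$ to get \eqref{eq:main-functional-general}, and then multiply by the product $M_d$ to obtain \eqref{eq:defect-general}. Lemma~\ref{lem:F-meromorphic} lets us treat $\mathcal F_x(t)$ as a meromorphic function of $t$. We first work with $|t|<1$ and $|qt|<1$, and then extend by meromorphic continuation.

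\textbf{Step 1: generating-function form of the recurrence.} Restrict \eqref{eq:fundamental-recurrence} to $j\ge 2$, so that every index $j+r-2$ is nonnegative, and multiply by $t^{j}$ before summing over $j\geq 2$.
\begin{itemize}[leftmargin=*]
\item For fixed $r$, the sum $\sum_{j\ge2}X_{j+r-2}t^{j}$ equals $t^{2-r}\bigl(\mathcal F_x(t)-\sum_{k<r}X_kt^k\bigr)$.
\item The term $\sum_{j\ge 2}q^{j-1}X_jt^j$ equals $q^{-1}\bigl(\mathcal F_x(qt)-X_0-X_1qt\bigr)$.
\item The right-hand side gives $C_xG\,t^2/(1-t)$.
\end{itemize}
Solving for $\mathcal F_x(qt)$ gives
\[
\mathcal F_x(qt)=q\sum_{r=0}^{d-1}x^rt^{2-r}\mathcal F_x(t)+(\text{Laurent polynomial in }t)-qC_xG\frac{t^2}{1-t}.
\]
The Laurent polynomial collects three pieces: the truncation terms $-q\sum_r x^r t^{2-r}\sum_{k<r}X_kt^k$, the constant $X_0$, and the term $X_1qt$. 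Its exponents range from $t^{3-d}$ up to $t^{1}$.

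\textbf{Step 2: pass to $\mathcal G$ and read off $R_{d,x}$.} Put $t=xu$ and multiply by $x^{-2}$.
\begin{itemize}[leftmargin=*]
\item The homogeneous coefficient becomes $q\,x^{-2}\sum_r x^r(xu)^{2-r}=x^2q\,u^{2}\sum_{r}u^{-r}=x^2q\frac{u^d-1}{u^{d-3}(u-1)}$.
\item The inhomogeneous term becomes $-C_xq\,u^2G/(1-xu)$.
\item The Laurent polynomial, multiplied by $u^{d-3}/q$, is a polynomial in $u$ of degree $\le d-2$ with coefficients in $q^{-1}\CC[[q]]$. The factor $q^{-1}$ comes only from the $X_0$ and $X_1qt$ terms. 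We define $R_{d,x}$ to be this polynomial.
\end{itemize}
The constant term of $R_{d,x}$ comes only from the pair $r=d-1$, $k=0$. It equals $-x^{-2}x^{d-1}x^{3-d}X_0=-X_0$, as claimed.

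Uniqueness is immediate: \eqref{eq:main-functional-general} determines $\frac{q}{u^{d-3}}R_{d,x}(u;q)$ as an explicit combination of the fixed functions $\mathcal G(qu)$, $\mathcal G(u)$ and $u^2/(1-xu)$.

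\textbf{Step 3: the defect identity.} We first compute $M_d(qu)/M_d(u)$ factor by factor.
\begin{itemize}[leftmargin=*]
\item $(q^du^d;q^d)_\infty/(u^d;q^d)_\infty=(1-u^d)^{-1}$.
\item $(u;q)_\infty/(qu;q)_\infty=1-u$.
\item $J(qu;q)=-u^{-1}J(u;q)$. Since $d-3=2m-2$ is even, $J(qu)^{-(d-3)}=u^{d-3}J(u)^{-(d-3)}$.
\end{itemize}
Hence $M_d(qu)=M_d(u)\,\frac{(1-u)u^{d-3}}{1-u^d}$, and therefore
\[
K(qu)=\frac{M_d(u)(1-u)u^{d-4}}{q(1-u^d)}\,\mathcal G(qu).
\]
Now substitute \eqref{eq:main-functional-general} into this expression.
\begin{itemize}[leftmargin=*]
\item The homogeneous term collapses exactly to $x^2M_d(u)\mathcal G(u)/u=x^2K(u)$, because $(1-u)(u^d-1)/\bigl((u-1)(1-u^d)\bigr)=1$.
\item The remaining terms give $\frac{M_d(u)(1-u)}{u(1-u^d)}\bigl(R-C_xu^{d-1}G/(1-xu)\bigr)$, which is $\frac{M_d(u)}{u(1-u^d)(1-xu)}T_{d,x}(u;q)$ with $T_{d,x}$ as in \eqref{eq:def-T-general}.
\end{itemize}
The degree bound for $T_{d,x}$ is immediate: the first summand has degree at most $(d-2)+2=d$, and the second has degree at most $d$.

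\textbf{Main obstacle.} The delicate step is the index bookkeeping in Step 1. One must track exactly which initial terms $X_0,\dots,X_{d-2}$ are truncated for each $r$, how the boundary terms $X_0$ and $X_1qt$ enter, and which powers of $x$ and $u$ survive after the rescaling $t=xu$. This bookkeeping is what guarantees that the degree is at most $d-2$ and that the constant term is exactly $-X_0$. I would also check, as a consistency test, that the coefficient of $u^{d-2}$ in $R_{d,x}$ is compatible with the $-X_1x$ contribution coming from $\mathcal F_x(qt)$.
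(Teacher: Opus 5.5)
Your proof is correct and follows essentially the paper's argument: form the generating function of the recurrence in $t$, substitute $t=xu$ to read off $R_{d,x}$ and its constant term, then clear denominators using $M_d(qu;q)=\frac{u^{d-3}(1-u)}{1-u^d}M_d(u;q)$. The only difference is that you sum over $j\ge 2$, so no negative-index terms arise, whereas the paper sums over $j\ge 0$ and eliminates $X_{-1},X_{-2}$ using the $j=0,1$ cases of the recurrence. Your polynomial agrees with the paper's explicit formula $q^{-1}x^{-2}X_0u^{d-3}+x^{-1}X_1u^{d-2}-\sum_{k=0}^{d-2}x^kX_k\sum_{a=k}^{d-2}u^a$, as uniqueness requires.
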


\begin{proof}
Uniqueness follows immediately from \eqref{eq:main-functional-general}: if $R$ and $R'$ both satisfy \eqref{eq:main-functional-general}, then $qu^{3-d}(R-R') = 0$, hence $R = R'$. 
    As for existence, Proposition~\ref{prop:recurrence} gives, for every integer $j$,
\begin{equation}\label{eq:recurrence-proof-general}
\sum_{r=0}^{d-1}x^rX_{j+r-2}-q^{j-1}X_j=C_xG.
\end{equation}
We first extract the two boundary values needed when we sum over $j\geq 0$.

For $j=1$ the $X_1$ term appears both in the finite sum and in the subtracted term, so
\[
X_{-1}+xX_0+x^2X_1+\cdots+x^{d-1}X_{d-2}-X_1=C_xG,
\]
and therefore we have
\begin{equation}\label{eq:Xminus1-general}
X_{-1}=C_xG-\sum_{r=1}^{d-1}x^rX_{r-1}+X_1.
\end{equation}
For $j=0$, we have
\[
X_{-2}+xX_{-1}+x^2X_0+x^3X_1+\cdots+x^{d-1}X_{d-3}-q^{-1}X_0=C_xG.
\]
Substituting \eqref{eq:Xminus1-general} into this identity and simplifying gives
\begin{equation}\label{eq:Xminus2-general}
X_{-2}=q^{-1}X_0-xX_1+x^dX_{d-2}+(1-x)C_xG.
\end{equation}

Now multiply the general recurrence in \eqref{eq:recurrence-proof-general} by $t^j$ and sum over $j\geq 0$. Set
\[
A_r(t):=\sum_{j\geq 0} X_{j+r-2}t^j
\qquad (0\leq r\leq d-1).
\]
Then we have
\[
A_0(t)=X_{-2}+tX_{-1}+t^2\mathcal F_x(t),
\qquad
A_1(t)=X_{-1}+t\mathcal F_x(t),
\]
and for $2\leq r\leq d-1$,
\[
A_r(t)=t^{2-r}\left(\mathcal F_x(t)-\sum_{n=0}^{r-3}X_nt^n\right).
\]
Also, we have
\[
\sum_{j\geq 0} q^{j-1}X_jt^j=q^{-1}\mathcal F_x(qt).
\]
Hence, it follows that
\[
q^{-1}\mathcal F_x(qt)
=
\left(\sum_{r=0}^{d-1}x^rt^{2-r}\right)\mathcal F_x(t)
+
B_{d,x}(t;q)
-
\frac{C_xG}{1-t},
\]
where we let
\begin{equation}\label{eq:Bdx-general}
B_{d,x}(t;q)
:=
X_{-2}+(x+t)X_{-1}-\sum_{r=2}^{d-1}x^r\sum_{n=0}^{r-3}X_n t^{n+2-r}.
\end{equation}

Now substitute $t=xu$ and divide by $x^2$. By definition, $x^{-2}\mathcal F_x(xu)=\mathcal G(u)$. Thus
\begin{equation}\label{eq:G-intermediate-general}
q^{-1}\mathcal G(qu)
=
x^2\left(\sum_{r=0}^{d-1}u^{2-r}\right)\mathcal G(u)
+
x^{-2}B_{d,x}(xu;q)-x^{-2}\frac{C_xG}{1-xu}.
\end{equation}
We now define
\begin{align}\label{eq:def-R-general}
R_{d,x}(u;q)
& :=
 u^{d-3}\left(x^{-2}B_{d,x}(xu;q)-x^{-2}C_xG(1+xu)\right) \\
 & \hphantom{:}= q^{-1}x^{-2}X_0u^{d-3} + x^{-1}X_1u^{d-2} - \sum_{k = 0}^{d-2}x^k X_k \sum_{a=k}^{d-2}u^a. \nonumber
\end{align}
Because the exponents occurring in $B_{d,x}(t;q)$ range from $3-d$ to $1$, the quantity $R_{d,x}(u;q)$ is a polynomial in $u$ of degree at most $d-2$ and constant term $-X_0$. The only negative power of $q$ arises from the term $q^{-1}X_0$ in \eqref{eq:Xminus2-general}, so the coefficients indeed lie in $q^{-1}\CC[[q]]$.

To determine the constant term, note that the only contribution to the exponent $u^{3-d}$ in the bracket in \eqref{eq:def-R-general} comes from the summand with $r=d-1$ and $n=0$ in the double sum in \eqref{eq:Bdx-general}. That contribution is
\[
-\,x^{-2}\cdot x^{d-1}X_0\cdot (xu)^{3-d}=-X_0u^{3-d}.
\]
Every term coming from $X_{-2}$, $(x+t)X_{-1}$, or $C_xG(1+xu)$ contributes a strictly larger power of $u$. Therefore
\[
R_{d,x}(0;q)=-X_0=-X_0^{(m)}(x;q).
\]
Finally, we have
\[
\sum_{r=0}^{d-1}u^{2-r}=\frac{u^d-1}{u^{d-3}(u-1)},
\qquad
1+xu-\frac1{1-xu}=-\frac{x^2u^2}{1-xu}.
\]
Substituting these identities into \eqref{eq:G-intermediate-general} gives \eqref{eq:main-functional-general}.

For the second part of the theorem we note that we have
\[
(q^d u^d;q^d)_\infty=\frac{(u^d;q^d)_\infty}{1-u^d},
\qquad
(qu;q)_\infty=\frac{(u;q)_\infty}{1-u},
\qquad
J(qu;q)=-u^{-1}J(u;q),
\]
and the fact that $d-3$ is even, it follows that
\begin{equation}\label{eq:M-shift-general}
M_d(qu;q)=\frac{u^{d-3}(1-u)}{1-u^d}M_d(u;q).
\end{equation}
Multiply \eqref{eq:main-functional-general} by $M_d(qu;q)/(qu)$. The main term becomes
\[
\frac{M_d(qu;q)}{u}\cdot x^2\frac{u^d-1}{u^{d-3}(u-1)}\mathcal G(u)
=
x^2\frac{M_d(u;q)}{u}\mathcal G(u)=x^2K_{d,x}(u;q),
\]
by \eqref{eq:M-shift-general}. The remaining two terms combine to give
\[
K_{d,x}(qu;q)-x^2K_{d,x}(u;q)
=
\frac{M_d(u;q)}{u(1-u^d)}(1-u)R_{d,x}(u;q)
-
C_x\frac{M_d(u;q)u^{d-2}(1-u)}{(1-u^d)(1-xu)}G.
\]
Factoring out $M_d(u;q)/\bigl(u(1-u^d)(1-xu)\bigr)$ yields exactly \eqref{eq:defect-general}. The degree bound for $T_{d,x}(u;q)$ follows immediately from \eqref{eq:def-T-general}, because $R_{d,x}(u;q)$ has degree at most $d-2$.

\end{proof}

\subsubsection{Partial fraction and kernel decomposition}

To solve the equation \eqref{eq:defect-general}, we introduce three families of correction kernels
\begin{equation}\label{eq:correction-kernels-general}
\Psi_d^{[x]}(u;q):=\sum_{n\geq 0}x^{-2n-2}\frac{q^{-n}M_d(q^nu;q)}{u},
\end{equation}
\begin{equation}\label{eq:omega-r-general}
\Omega_{d,r}^{[x]}(u;q):=\sum_{n\geq 0}x^{-2n-2}\frac{M_d(q^nu;q)}{1-\omega^rq^nu}
\qquad (0\leq r\leq d-1),
\end{equation}
\begin{equation}\label{eq:omega-x-general}
\Omega_{d,\mathrm{ext}}^{[x]}(u;q):=\sum_{n\geq 0}x^{-2n-2}\frac{M_d(q^nu;q)}{1-xq^nu}.
\end{equation}

\begin{proposition}
    The series $\Psi_d^{[x]}(u;q),\, \Omega_{d,r}^{[x]}(u;q) $ and $\Omega_{d,\mathrm{ext}}^{[x]}(u;q)$ converge locally uniformly away from their pole sets and satisfy
\begin{equation}\label{eq:qshift-correction-general}
\Psi_d^{[x]}(qu;q)-x^2\Psi_d^{[x]}(u;q)=-\frac{M_d(u;q)}{u},
\end{equation}
\begin{equation}\label{eq:qshift-omega-r}
\Omega_{d,r}^{[x]}(qu;q)-x^2\Omega_{d,r}^{[x]}(u;q)=-\frac{M_d(u;q)}{1-\omega^r u},
\end{equation}
\begin{equation}\label{eq:qshift-omega-x}
\Omega_{d,\mathrm{ext}}^{[x]}(qu;q)-x^2\Omega_{d,\mathrm{ext}}^{[x]}(u;q)=-\frac{M_d(u;q)}{1-xu}.
\end{equation}
\end{proposition}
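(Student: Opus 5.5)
The plan has two parts: first prove the three shift identities by a telescoping argument, then justify convergence with a Gaussian-decay bound on $M_d(q^nu;q)$.

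\emph{Shift identities.} Write each kernel as $\sum_{n\geq0}x^{-2n-2}f(q^nu)$. For $\Omega_{d,r}^{[x]}$ take $f(v)=M_d(v;q)/(1-\omega^r v)$, for $\Omega_{d,\mathrm{ext}}^{[x]}$ take $f(v)=M_d(v;q)/(1-xv)$, and for $\Psi_d^{[x]}$ take $f(v)=M_d(v;q)/v$. For $\Psi_d^{[x]}$ the factor $q^{-n}/u$ is exactly $1/(q^nu)$, so this $f$ is correct. Replacing $u$ by $qu$ shifts the index, giving
\[
F(qu)=\sum_{n\geq1}x^{-2n}f(q^nu).
\]
Also
\[
x^2F(u)=\sum_{n\geq0}x^{-2n}f(q^nu).
\]
Subtracting the two series leaves only the $n=0$ term of the second, so
\[
F(qu)-x^2F(u)=-f(u).
\]
This is exactly \eqref{eq:qshift-correction-general}, \eqref{eq:qshift-omega-r} and \eqref{eq:qshift-omega-x}. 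The rearrangement is legitimate once absolute convergence is established. No use of $|x|=1$ beyond $|x^{-2n-2}|=1$ is needed.

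\emph{Convergence.} I need a bound on $M_d(q^nu;q)$ for $u$ in a compact set $K$ avoiding the poles. Iterating \eqref{eq:M-shift-general} gives
\[
M_d(q^nu;q)=M_d(u;q)\prod_{k=0}^{n-1}\frac{(q^ku)^{d-3}(1-q^ku)}{1-q^{kd}u^d}.
\]
The power factors combine to $u^{n(d-3)}q^{(d-3)n(n-1)/2}$. Since $d-3=2m-2\geq2$, this decays like $|q|^{(m-1)n^2}$ times at most exponential growth $|u|^{n(d-3)}$.

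For the remaining factors, $\prod_{k\ge0}(1-q^ku)$ converges. The products $\prod_{k<n}(1-q^{kd}u^d)^{-1}$ are uniformly bounded for $k\geq k_0(K)$. The finitely many early factors are bounded away from $0$ on $K$, since $K$ avoids the zeros of $1-q^{kd}u^d$, and these zeros are among the excluded points. Hence
\[
|M_d(q^nu;q)|\leq C_K\,C^n|q|^{(m-1)n^2}\qquad\text{on }K.
\]
The extra denominators $1-\omega^rq^nu$, $1-xq^nu$ and the factor $1/(q^nu)$ are handled as follows. The first two tend to $1$ and are bounded away from $0$ on $K$, which avoids their zeros. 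The last contributes only $|q|^{-n}$, which the Gaussian decay absorbs. This gives normal convergence on $K$, hence local uniform convergence and holomorphy away from the pole sets, by Weierstrass.

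\emph{Main obstacle.} The delicate step is identifying the pole sets precisely and choosing $K$ consistently. $M_d(u;q)$ itself has poles where $J(u;q)=0$ or where $(u;q)_\infty=0$, partly cancelled by zeros of $(u^d;q^d)_\infty$. I will simply define the pole set as the union over $n$ of the singularities of the summands, which is discrete in $\CC^\times$ and accumulates only at $0$. Local uniform convergence is then asserted on compacta of $\CC^\times$ minus this set, which is all the statement requires.
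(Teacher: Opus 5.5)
Your proposal is correct and is essentially the paper's proof: the three identities come from the index shift $n\mapsto n+1$, and convergence comes from iterating \eqref{eq:M-shift-general} to obtain the Gaussian factor $q^{(d-3)n(n-1)/2}u^{n(d-3)}$ with $d-3\geq 2$. One small fix concerns the points $u=\omega^jq^{-k}$ with $d\nmid j$: they are zeros of $M_d(u;q)$, not poles of the summands, so rather than excluding them you should bound $M_d(q^nu;q)=q^{(d-3)n(n-1)/2}u^{n(d-3)}(u;q)_n(q^{dn}u^d;q^d)_\infty/\bigl((u;q)_\infty J(u;q)^{d-3}\bigr)$ directly, which only requires $K$ to avoid $q^{\ZZ}$ and the zeros of the linear denominators.
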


\begin{proof}
    To prove convergence of the correction kernels, iterate \eqref{eq:M-shift-general}. For every integer $n\geq 0$,
\begin{equation}\label{eq:M-iterate-general}
M_d(q^nu;q)
=
q^{\frac{(d-3)n(n-1)}2}u^{n(d-3)}\frac{(u;q)_n}{(u^d;q^d)_n}M_d(u;q).
\end{equation}
Since $d\geq 5$, the quadratic factor in the exponent of $q$ is positive. Therefore the summands in \eqref{eq:correction-kernels-general}, \eqref{eq:omega-r-general}, and \eqref{eq:omega-x-general} are $\OO(|q|^{cn^2})$ for some $c>0$ on compact subsets away from the poles, because the extra factors $x^{-2n-2}$ have modulus $1$. This proves local uniform convergence.

The $q$-shift identities are immediate from an index shift. For example,
\[
\Psi_d^{[x]}(qu;q)
=
\sum_{n\geq 0}x^{-2n-2}\frac{q^{-n}M_d(q^{n+1}u;q)}{qu}
=
\sum_{r\geq 1}x^{-2r}\frac{q^{-r}M_d(q^ru;q)}{u}
=
x^2\Psi_d^{[x]}(u;q)-\frac{M_d(u;q)}{u},
\]
which proves \eqref{eq:qshift-correction-general}. The proofs of \eqref{eq:qshift-omega-r} and \eqref{eq:qshift-omega-x} are identical.
\end{proof}

\begin{theorem}\label{thm:general-root:iii}
    There exist unique Laurent series
\[
D_{r,x}(q)\in q^{-1}\CC[[q]]\qquad (0\leq r\leq d-1),
\qquad
E_x(q)\in q^{-1}\CC[[q]],
\]
such that
\begin{equation}\label{eq:partial-fractions-general}
\frac{T_{d,x}(u;q)}{u(1-u^d)(1-xu)}
=
-\frac{X_0^{(m)}(x;q)}{u}
+
\sum_{r=0}^{d-1}\frac{D_{r,x}(q)}{1-\omega^r u}
+
\frac{E_x(q)}{1-xu}.
\end{equation}
The coefficients are given explicitly by
\begin{equation}\label{eq:D-general-explicit}
D_{r,x}(q)=\frac{\omega^r}{d\bigl(1-x\omega^{-r}\bigr)}T_{d,x}(\omega^{-r};q),
\qquad
E_x(q)=\frac{x}{1-x^{-d}}T_{d,x}(x^{-1};q).
\end{equation}
Finally, we have
\begin{equation}\label{eq:Kd-decomposition-general}
K_{d,x}(u;q)
=
X_0^{(m)}(x;q)\Psi_d^{[x]}(u;q)
-
\sum_{r=0}^{d-1}D_{r,x}(q)\Omega_{d,r}^{[x]}(u;q)
-
E_x(q)\Omega_{d,\mathrm{ext}}^{[x]}(u;q).
\end{equation}
\end{theorem}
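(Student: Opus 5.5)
The proof has two parts: the partial fraction decomposition \eqref{eq:partial-fractions-general}, and then the kernel decomposition \eqref{eq:Kd-decomposition-general} obtained by solving the $q$-difference equation \eqref{eq:defect-general}.

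\textbf{Partial fractions.} Since $x^d\neq1$, the points $u=0$, $u=\omega^{-r}$ ($0\le r\le d-1$) and $u=x^{-1}$ are pairwise distinct. Hence $u(1-u^d)(1-xu)$ has $d+2$ distinct simple roots. By Theorem~\ref{thm:general-root:iplusii}, $T_{d,x}$ has degree at most $d$, which is strictly less than $d+2$. So the rational function on the left of \eqref{eq:partial-fractions-general} is proper, vanishes at $u=\infty$, and has only simple poles. Its partial fraction expansion therefore consists only of terms $a/u$, $D_r/(1-\omega^ru)$ and $E/(1-xu)$, with no polynomial part, and uniqueness is the usual uniqueness of partial fractions.

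The coefficients are computed by residues:
\begin{itemize}
\item At $u=0$ the coefficient is $T_{d,x}(0;q)=R_{d,x}(0;q)=-X_0$, using Theorem~\ref{thm:general-root:iplusii}.
\item At $u=\omega^{-r}$, multiply by $1-\omega^ru$ and let $u\to\omega^{-r}$. Since $(1-u^d)/(1-\omega^ru)\to d$, this gives $D_{r,x}=\omega^rT_{d,x}(\omega^{-r})/\bigl(d(1-x\omega^{-r})\bigr)$.
\item At $u=x^{-1}$ the same procedure gives $E_x=xT_{d,x}(x^{-1})/(1-x^{-d})$.
\end{itemize}
These are exactly the formulas in \eqref{eq:D-general-explicit}. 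Evaluating at roots of unity preserves the coefficient ring $q^{-1}\CC[[q]]$, because $T_{d,x}$ has coefficients there.

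\textbf{Kernel decomposition.} Let $\widetilde K$ denote the right side of \eqref{eq:Kd-decomposition-general}. By the shift identities \eqref{eq:qshift-correction-general}--\eqref{eq:qshift-omega-x},
\[
\widetilde K(qu)-x^2\widetilde K(u)=M_d(u;q)\Bigl(-\tfrac{X_0}{u}+\sum_r\tfrac{D_{r,x}}{1-\omega^ru}+\tfrac{E_x}{1-xu}\Bigr).
\]
By \eqref{eq:partial-fractions-general} and \eqref{eq:defect-general}, this equals $K_{d,x}(qu)-x^2K_{d,x}(u)$. Therefore $\Delta:=K_{d,x}-\widetilde K$ satisfies $\Delta(qu)=x^2\Delta(u)$, and the remaining task is to show $\Delta=0$.

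\textbf{Showing $\Delta=0$ (main obstacle).} Iterating gives $\Delta(u)=x^{-2n}\Delta(q^nu)$ for every $n\ge0$. Since $|x|=1$, it suffices to show $\Delta(q^nu)\to0$ as $n\to\infty$. By \eqref{eq:M-iterate-general}, $M_d(q^nu)$ carries the factor $q^{(d-3)n(n-1)/2}u^{n(d-3)}$, which decays superexponentially.
\begin{itemize}
\item For $K_{d,x}(q^nu)=M_d(q^nu)\mathcal G(q^nu)/(q^nu)$: the factor $1/(q^nu)$ grows only like $|q|^{-n}$, and $\mathcal G(q^nu)=x^{-2}\mathcal F_x(xq^nu)$ tends to $x^{-2}\mathcal F_x(0)=x^{-2}X_0$ by Lemma~\ref{lem:F-meromorphic}. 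Hence $K_{d,x}(q^nu)\to0$.
\item For the kernels: each summand of $\Psi_d^{[x]}(q^nu)$, $\Omega_{d,r}^{[x]}(q^nu)$, $\Omega_{d,\mathrm{ext}}^{[x]}(q^nu)$ involves $M_d(q^{n+k}u)$, bounded by $|q|^{c(n+k)^2}$ times bounded factors. The only extra growth is the $q^{-k}$ and $1/u$ in $\Psi$. The denominators $1-\omega^rq^{n+k}u$ tend to $1$, so all three kernels also tend to $0$.
\end{itemize}
Hence $\Delta=0$. The delicate point is uniformity: one needs the bound on $(u;q)_n/(u^d;q^d)_n$ from \eqref{eq:M-iterate-general} on a fixed compact set of $u$ avoiding the poles, and then the identity extends by meromorphic continuation.

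If the direct limit argument proves awkward, I would instead compare Laurent expansions at $u=0$ as formal series in $q$. There, $\Delta(qu)=x^2\Delta(u)$ forces every coefficient of $u^k$ to satisfy $(q^k-x^2)c_k=0$, so $c_k=0$ for all $k$.
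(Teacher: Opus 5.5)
Your main argument is correct and is essentially the paper's proof. The three steps match:
\begin{itemize}
\item unique partial fractions from the $d+2$ distinct linear factors, with the residues obtained by evaluation;
\item the twisted remainder $\Delta(qu)=x^2\Delta(u)$, obtained from the three kernel shift laws;
\item $\Delta=0$, because $\Delta(q^nu)\to0$.
\end{itemize}
The only difference is that the paper uses dominated convergence where you use continuity of $\mathcal F_x$ at $t=0$.

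Drop the fallback, however, because it fails for two reasons. First, $M_d$ has poles of order $d-3$ at every $u=q^k$ with $k\geq1$, coming from $J(u;q)^{d-3}$. These accumulate at $0$, so $\Delta$ has no a priori Laurent expansion at $u=0$. Second, even formally, the $k=0$ relation $(1-x^2)c_0=0$ is vacuous for $x=-1$. That value is allowed here, since $d$ is odd and so $(-1)^d\neq1$.
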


\begin{proof}

The denominator in \eqref{eq:defect-general} factors as
\[
u(1-u^d)(1-xu)=u\prod_{r=0}^{d-1}(1-\omega^ru)(1-xu).
\]
Since $x^d\neq 1$, the $d+2$ linear factors are pairwise distinct. Therefore there is a unique partial-fraction decomposition of the form \eqref{eq:partial-fractions-general}.

The coefficient of $1/u$ is the value at $u=0$ of the numerator, namely $T_{d,x}(0;q)=R_{d,x}(0;q)=-X_0^{(m)}(x;q)$. This proves the first term in \eqref{eq:partial-fractions-general}. The formulas \eqref{eq:D-general-explicit} follow by multiplying \eqref{eq:partial-fractions-general} by $1-\omega^ru$ or $1-xu$ and then setting $u=\omega^{-r}$ or $u=x^{-1}$.

Now define
\[
\widehat K(u;q)
:=
K_{d,x}(u;q)
-
X_0^{(m)}(x;q)\Psi_d^{[x]}(u;q)
+
\sum_{r=0}^{d-1}D_{r,x}(q)\Omega_{d,r}^{[x]}(u;q)
+
E_x(q)\Omega_{d,\mathrm{ext}}^{[x]}(u;q).
\]
Using \eqref{eq:defect-general}, \eqref{eq:partial-fractions-general}, and the three $q$-shift identities just proved, we obtain
\[
\widehat K(qu;q)=x^2\widehat K(u;q).
\]
It remains to show that this twisted $q$-periodic remainder vanishes identically.

By Lemma~\ref{lem:F-meromorphic}, we have
\[
\mathcal G(q^Mu;q)
=
x^{-2}\sum_{n\geq 0}\frac{q^{n^2}G_d(xq^n;q)}{1-xuq^{n+M}}.
\]
The series $\sum_{n\geq 0} q^{n^2}G_d(xq^n;q)$ converges absolutely. For fixed $u$ away from the poles there is $M_0$ such that $|1-xuq^{n+M}|^{-1}\leq 2$ for every $n\geq 0$ and $M\geq M_0$. Dominated convergence therefore yields
\[
\mathcal G(q^Mu;q)\longrightarrow x^{-2}X_0^{(m)}(x;q)
\qquad (M\to\infty).
\]
On the other hand, \eqref{eq:M-iterate-general} implies
\[
\frac{M_d(q^Mu;q)}{q^M u}\longrightarrow 0
\qquad (M\to\infty),
\]
because $d-3\geq 2$. Hence, we have
\[
K_{d,x}(q^Mu;q)=\frac{M_d(q^Mu;q)}{q^Mu}\mathcal G(q^Mu;q)\longrightarrow 0.
\]
The same estimate shows that
\[
\Psi_d^{[x]}(q^Mu;q)
=
x^{2M}\sum_{r\geq M}x^{-2r-2}\frac{q^{-r}M_d(q^ru;q)}{u}
\longrightarrow 0,
\]
and similarly
\[
\Omega_{d,r}^{[x]}(q^Mu;q)\longrightarrow 0,
\qquad
\Omega_{d,\mathrm{ext}}^{[x]}(q^Mu;q)\longrightarrow 0.
\]
Therefore, it follows that
\[
\widehat K(q^Mu;q)\longrightarrow 0.
\]
Since $\widehat K(q^Mu;q)=x^{2M}\widehat K(u;q)$ for every $M$, we conclude that
\[
\widehat K(u;q)=x^{-2M}\widehat K(q^Mu;q)\longrightarrow 0,
\]
and hence $\widehat K(u;q)=0$ identically. This proves \eqref{eq:Kd-decomposition-general}.

\end{proof}

\subsubsection{Elliptic transformation laws and theta decomposition}
Write $u=e^{2\pi i w}$ and define
\[
P_d(u;q):=u^{-m-1}J(-u;q)^{d+1}\frac{(q/u;q)_\infty}{(q^d/u^d;q^d)_\infty}
\]
and the generating series
\[
\Phi_{d,x}(\tau,w):=P_d(u;q)\mathcal G_{d,x}(u;q).
\]
Then $\Phi_{d,x}(\tau,w)$ satisfies the following elliptic transformations.
\begin{proposition}\label{prop:ellipticPhi}
We have
\[
\Phi_{d,x}(\tau,w+1)=\Phi_{d,x}(\tau,w),
\]
and
    \begin{equation}\label{eq:Phi-open-elliptic-general}
\begin{aligned}
\Phi_{d,x}(\tau,w+\tau)
&=
x^2q^{-m}u^{-2m}\Phi_{d,x}(\tau,w)\\
&\quad+
q^{-m}u^{-2m}P_d(u;q)
\Biggl(
- X_0^{(m)}(x;q)
+
\sum_{r=0}^{d-1}D_{r,x}(q)\frac{u}{1-\omega^ru}\\
&\qquad\qquad+
E_x(q)\frac{u}{1-xu}
\Biggr).
\end{aligned}
\end{equation}
\end{proposition}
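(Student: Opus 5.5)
The approach is to reduce everything to the $q$-difference equation \eqref{eq:defect-general} for $K_{d,x}$ by comparing the normalizing products $P_d$ and $M_d$. Periodicity under $w\mapsto w+1$ is immediate. Every factor of $P_d(u;q)$ is a function of $u=e^{2\pi i w}$ with integer exponent $u^{-m-1}$, and $\mathcal G_{d,x}(u;q)$ is likewise a function of $u$. The substance is the $\tau$-shift, and the plan is to express $\Phi_{d,x}$ as $(\text{elementary factor})\cdot K_{d,x}$.

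\textbf{Step 1: compare $P_d$ with $M_d$.} I will compute the ratio
\[
\frac{P_d(u;q)\,u}{M_d(u;q)}.
\]
Both products are assembled from $J(\pm u;q)$, $(u;q)_\infty$, $(q/u;q)_\infty$, $(u^d;q^d)_\infty$ and $(q^d/u^d;q^d)_\infty$. Using $(u^d;q^d)_\infty(q^d/u^d;q^d)_\infty=J(u^d;q^d)/(q^d;q^d)_\infty$, and similarly for $(u;q)_\infty(q/u;q)_\infty$, I expect the ratio to be expressible through theta quotients. Call this ratio $N(u;q)$, so that
\[
\Phi_{d,x}=N(u;q)\,K_{d,x}(u;q).
\]
I will then compute the shift factor $N(qu;q)/N(u;q)$ using $J(qu;q)=-u^{-1}J(u;q)$, $J(-qu;q)=u^{-1}J(-u;q)$, and \eqref{eq:M-shift-general}. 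This is a bookkeeping step: the goal is to check that the factor equals exactly $q^{-m}u^{-2m}$. The powers from $J(-u)^{d+1}$ contribute $u^{-(d+1)}$. The prefactor $u^{-m-1}$ contributes $q^{-m-1}$. The factors $(q/u)_\infty$ and $(q^d/u^d;q^d)_\infty$ contribute $(1-u^d)/(1-u)$-type corrections, and these must cancel against the $M_d$ shift. Alternatively, I can bypass $N$ and verify directly
\[
P_d(qu;q)=q^{-m}u^{-2m}\,\frac{u^{d-3}(u-1)}{u^d-1}\,q^{-1}\,P_d(u;q),
\]
since this is precisely the factor that converts the homogeneous coefficient $x^2q(u^d-1)/(u^{d-3}(u-1))$ in \eqref{eq:main-functional-general} into $x^2q^{-m}u^{-2m}$. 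I will take this direct route.

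\textbf{Step 2: multiply the functional equation.} Multiplying \eqref{eq:main-functional-general} by $P_d(qu;q)$ gives $\Phi_{d,x}(\tau,w+\tau)$ on the left. On the right, the homogeneous term becomes $x^2q^{-m}u^{-2m}\Phi_{d,x}(\tau,w)$. The two inhomogeneous terms become
\[
q^{-m}u^{-2m}P_d(u;q)\,\frac{u^{d-3}(u-1)}{u^d-1}\left(\frac{R_{d,x}(u;q)}{u^{d-3}}-\frac{C_xu^2G}{1-xu}\right).
\]
Simplifying, the bracket times the prefactor equals
\[
\frac{(1-u)\bigl[(1-xu)R_{d,x}-C_xu^{d-1}G\bigr]}{(1-u^d)(1-xu)}=\frac{T_{d,x}(u;q)}{(1-u^d)(1-xu)}.
\]

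\textbf{Step 3: insert the partial fractions.} Writing the last expression as $u$ times the left side of \eqref{eq:partial-fractions-general} from Theorem~\ref{thm:general-root:iii} yields
\[
-X_0^{(m)}(x;q)+\sum_{r=0}^{d-1}D_{r,x}(q)\frac{u}{1-\omega^ru}+E_x(q)\frac{u}{1-xu},
\]
which is exactly the bracket in \eqref{eq:Phi-open-elliptic-general}.

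The main obstacle is the sign and exponent bookkeeping in the shift of $P_d$ in Step 1. The delicate points are the factor $J(-u)$ as opposed to $J(u)$, the evenness of $d+1$, and the shifts of $(q/u;q)_\infty$ and $(q^d/u^d;q^d)_\infty$. For these I will use
\[
(1/u;q)_\infty=(1-1/u)(q/u;q)_\infty
\]
and its $q^d$-analogue, which give the ratio $(1-u^{-d})/(1-u^{-1})$ after converting to $u^{d-1}(u^d-1)/\bigl(u^{d}(u-1)\bigr)$-type form. I will finally confirm the total $u$-exponent $-(d+1)+(d-1)-?=-2m$ and the total $q$-exponent $-m-1+1$ against the targeted multiplier.
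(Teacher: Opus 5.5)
Your proposal is correct and is essentially the paper's proof: the paper establishes $P_d(qu;q)=q^{-m-1}u^{-2}\frac{1-u}{1-u^d}P_d(u;q)$, which is your displayed target identity since $d=2m+1$. It then multiplies \eqref{eq:main-functional-general} by $P_d(qu;q)$, collects the inhomogeneous terms into $q^{-m}u^{-2m}P_d(u;q)\,T_{d,x}(u;q)/\bigl((1-u^d)(1-xu)\bigr)$, and inserts \eqref{eq:partial-fractions-general} multiplied by $u$, exactly as you do. In your final exponent check, the shift of $P_d$ alone contributes $u^{-(d+1)+(d-1)}=u^{-2}$, and the missing term is the $u^{-(d-3)}$ from the homogeneous coefficient, giving $u^{-2-(d-3)}=u^{-2m}$.
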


\begin{proof}
      First note that $w\mapsto w+1$ leaves $u=e^{2\pi i w}$ unchanged.
      Next we compute the $q$-shift of $P_d(u;q)$. Since
\[
J(-qu;q)=u^{-1}J(-u;q)
\]
and
\[
\frac{(q/(qu);q)_\infty}{(q^d/(q^d u^d);q^d)_\infty}
=
\frac{(1/u;q)_\infty}{(1/u^d;q^d)_\infty}
=u^{d-1}\frac{1-u}{1-u^d}\frac{(q/u;q)_\infty}{(q^d/u^d;q^d)_\infty},
\]
we obtain
\begin{equation}\label{eq:P-shift-general}
P_d(qu;q)=q^{-m-1}u^{-2}\frac{1-u}{1-u^d}P_d(u;q).
\end{equation}
Multiply \eqref{eq:main-functional-general} by $P_d(qu;q)$. Using \eqref{eq:P-shift-general} and the identity $d=2m+1$, the coefficient of $\mathcal G(u;q)$ simplifies to
\[
P_d(qu;q)\cdot x^2 q\frac{u^d-1}{u^{d-3}(u-1)}
=
x^2q^{-m}u^{-2m}P_d(u;q).
\]
The remaining terms combine exactly as in the proof of \eqref{eq:defect-general}, giving
\[
\Phi_{d,x}(\tau,w+\tau)
=
x^2q^{-m}u^{-2m}\Phi_{d,x}(\tau,w)
+
q^{-m}u^{-2m}P_d(u;q)\frac{T_{d,x}(u;q)}{(1-u^d)(1-xu)}.
\]
Insert the partial-fraction decomposition \eqref{eq:partial-fractions-general} and multiply by $u$. This yields \eqref{eq:Phi-open-elliptic-general}.
\end{proof}

We introduce the corresponding Appell--Lerch-type sums
\begin{align}\label{eq:allappelltypesums}
\mathcal B_d^{[x]}(\tau,w)
&:=
\sum_{n\geq 0} x^{-2n-2}q^{mn^2}u^{2mn}P_d(q^nu;q),\\
\label{eq:allappelltypesums2} \mathcal A_{d,r}^{[x]}(\tau,w)
&:=
\sum_{n\geq 0}
x^{-2n-2}q^{mn^2+n}u^{2mn+1}
\frac{P_d(q^n u;q)}{1-\omega^r q^n u}
\qquad (0\le r\le d-1),\\ 
\label{eq:allappelltypesums3} \mathcal A_{d,\mathrm{ext}}^{[x]}(\tau,w)
&:=
\sum_{n\geq 0}
x^{-2n-2}q^{mn^2+n}u^{2mn+1}
\frac{P_d(q^n u;q)}{1-xq^n u}.
\end{align}
\begin{proposition}\label{prop:allappelltypesums}
    The series \eqref{eq:allappelltypesums}, \eqref{eq:allappelltypesums2}, and \eqref{eq:allappelltypesums3} converge locally uniformly away from their pole sets, are invariant under $w\mapsto w+1$ and satisfy
\begin{align*}
\mathcal B_d^{[x]}(\tau,w+\tau)
&=
x^2q^{-m}u^{-2m}\mathcal B_d^{[x]}(\tau,w)-q^{-m}u^{-2m}P_d(u;q),\\
\mathcal A_{d,r}^{[x]}(\tau,w+\tau)
&=
x^2q^{-m}u^{-2m}\mathcal A_{d,r}^{[x]}(\tau,w)-q^{-m}u^{-2m}\frac{uP_d(u;q)}{1-\omega^ru},\\
\mathcal A_{d,\mathrm{ext}}^{[x]}(\tau,w+\tau)
&=
x^2q^{-m}u^{-2m}\mathcal A_{d,\mathrm{ext}}^{[x]}(\tau,w)-q^{-m}u^{-2m}\frac{uP_d(u;q)}{1-xu}.
\end{align*}
The possible poles of $\Phi_{d,x}$, $\mathcal B_d^{[x]}$, $\mathcal A_{d,r}^{[x]}$, $\mathcal A_{d,\mathrm{ext}}^{[x]}$, and $\mathcal H_{d,x}$ are at most simple and are contained in the torsion orbits represented by
\[
\Sigma_{d,x}:=\left\{-\frac{r}{d}:0\leq r\leq d-1\right\}\cup\{-\alpha\}
\subset \QQ/\ZZ,
\]
where $x=e^{2\pi i\alpha}$ and $0\leq \alpha<1$.

\end{proposition}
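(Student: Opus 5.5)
The plan is to treat the three claims separately: local uniform convergence, the shift laws under $w\mapsto w+\tau$, and the pole structure.

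\emph{Convergence.} I start from the shift law \eqref{eq:P-shift-general}, $P_d(qu;q)=q^{-m-1}u^{-2}\frac{1-u}{1-u^d}P_d(u;q)$. Iterating it gives
\[
P_d(q^nu;q)=q^{-(m+1)n}q^{-n(n-1)}u^{-2n}\frac{(u;q)_n}{(u^d;q^d)_n}P_d(u;q).
\]
Multiplying by $q^{mn^2}u^{2mn}$, the $n$-th summand of $\mathcal B_d^{[x]}$ is bounded by
\[
|q|^{(m-1)n^2-2mn}\,|u|^{(2m-2)n}
\]
times factors that stay bounded on compacta avoiding the zeros of $(u^d;q^d)_n$. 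Since $m\geq2$, the Gaussian factor $|q|^{(m-1)n^2}$ dominates. The same estimate applies to $\mathcal A_{d,r}^{[x]}$ and $\mathcal A_{d,\mathrm{ext}}^{[x]}$. In those two sums the extra denominator $1-\omega^rq^nu$ (respectively $1-xq^nu$) tends to $1$ and is bounded below for large $n$. Invariance under $w\mapsto w+1$ is immediate because every summand depends on $w$ only through $u$.

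\emph{Shift laws.} These follow by reindexing, exactly as in \eqref{eq:qshift-correction-general}. Replacing $u$ by $qu$ in the $n$-th summand of $\mathcal B_d^{[x]}$ gives
\[
x^{-2n-2}q^{mn^2+2mn}u^{2mn}P_d(q^{n+1}u;q).
\]
Put $N=n+1$ and use $mn^2+2mn=mN^2-m$. The summand becomes $x^2q^{-m}u^{-2m}$ times the $N$-th summand of $\mathcal B_d^{[x]}(\tau,w)$. Summing over $N\geq1$ yields the full series minus its $N=0$ term, and that term is $x^{-2}P_d(u;q)$. So the missing term is $x^2q^{-m}u^{-2m}x^{-2}P_d(u;q)=q^{-m}u^{-2m}P_d(u;q)$, which is the stated correction. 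For the $\mathcal A$-kernels the extra factor $q^nu$ shifts consistently, since $q^{n}(qu)=q^{N}u$, and the $N=0$ term gives $uP_d(u;q)/(1-\omega^ru)$.

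\emph{Poles.} First I locate the zeros of $(q^d/u^d;q^d)_\infty$. They occur at $u^d=q^{dk}$ with $k\geq1$, that is, $w\equiv -r/d+k\tau$. All other factors of $P_d$ are entire in $w$ away from $u=0$, so $P_d$ has poles only on the $d$-torsion orbits. These poles are simple because the roots of $1-q^{dk}u^{-d}$ in $u$ are simple.

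In the kernels, the iterated formula shows that $P_d(q^nu;q)$ gains the factor $1/(u^d;q^d)_n$. This adds poles at $u^d=q^{-dk}$ with $0\leq k<n$. The denominators $1-\omega^rq^nu$ and $1-xq^nu$ add poles on the orbits of $-r/d$ and $-\alpha$. These sets lie in $\Sigma_{d,x}$.

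The main obstacle is showing that \emph{simple} poles survive the summation over $n$. At a single point, several summands may be singular, and some of them carry two coinciding factors: for instance $1/(u^d;q^d)_n$ together with the zero of $(q^d/u^d;q^d)_\infty$, or $1/(1-\omega^rq^nu)$ at a $d$-torsion point.

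To handle this, I would rewrite each summand as a single product. Using $(q/u;q)_\infty(u;q)_n$-type cancellations, I would express $P_d(q^nu;q)$ as
\[
u^{\ast}\,J(-u;q)^{d+1}\,\frac{(q^{1-n}/u;q)_\infty}{(q^{d(1-n)}/u^d;q^d)_\infty}
\]
up to monomials. Its only polar factor is then the $d$-th power denominator, which vanishes simply. Moreover, the factor $(q^{1-n}/u;q)_\infty$ cancels the $r=0$ zeros.

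For the $\mathcal A$-kernels, the extra linear denominator at a $d$-torsion point coincides with a zero of that $d$-th power denominator. Here I expect to need the cancellation against the factor $(q^{1-n}/u;q)_\infty$ (when $r=0$) or against the vanishing of $(1-u)$-type numerators coming from $\frac{1-u}{1-u^d}$. Checking this case by case is the delicate step.

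Once simplicity is established termwise, Lemma~\ref{lem:normal-simple-poles} applies, with the normal convergence of $(w-p)f_n$ coming from the same Gaussian bound. This gives simple poles for each kernel. For $\Phi_{d,x}$, Lemma~\ref{lem:F-meromorphic} gives simple poles of $\mathcal G$ at $xu=q^{-n}$, which lie on the orbit of $-\alpha$. Combining these with the poles of $P_d$ shows that the poles of $\Phi_{d,x}$ are simple, and hence the same holds for the linear combination $\mathcal H_{d,x}$. If a pole of $P_d$ and a pole of $\mathcal G$ coincide, which happens when $x$ is $d$-torsion, I would again check that a zero of $P_d$ cancels it.
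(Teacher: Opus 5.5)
Your route is the paper's. You use the iterated $P_d$-shift to get Gaussian bounds, reindex with $N=n+1$ for the three shift laws, and apply Lemma~\ref{lem:normal-simple-poles} with the same bounds to pass from termwise simple poles to simple poles of the sums. The gap is the step you flag as delicate. There you assert that, within one summand of $\mathcal A_{d,r}^{[x]}$, the linear denominator vanishes at a zero of the $d$-th power denominator. That is false. Moreover, the cancellations you propose would not repair it if it were true. Both $(q^{1-n}/u;q)_\infty$ and the $(1-u)$-type factors $(u;q)_n$ vanish only on the zero orbit, so for $r\neq0$ such a coincidence would give a double pole.

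The correct check is short. With $v=q^nu$, the $n$-th summand of each $\mathcal A$-kernel is a monomial times $P_d(v;q)/(1-\omega^r v)$, respectively $P_d(v;q)/(1-xv)$. By the product formula, $P_d(v;q)$ has only simple poles. They sit at $v=\omega^jq^k$ with $k\geq1$ and $j\not\equiv0\pmod d$, since the $j=0$ points are cancelled by $(q/v;q)_\infty$. All of them therefore lie in $|v|\leq|q|$. The linear factors $1-\omega^rv$ and $1-xv$ vanish only on $|v|=1$. So no summand of any kernel has two singular factors meeting, and no case analysis is needed.

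Your other feared coincidence fails the same way. In your single-product form, the denominator $(q^{d(1-n)}/u^d;q^d)_\infty$ splits into $(q^d/u^d;q^d)_\infty$, whose zeros lie in $|u|<1$, and the factors coming from $(u^d;q^d)_n$, whose zeros lie in $|u|\geq1$. These never meet.

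The same remark settles your last worry about $\Phi_{d,x}$. The poles of $\mathcal G_{d,x}$ are at $u=x^{-1}q^{-n}$ with $n\geq0$, so $|u|\geq1$, while those of $P_d$ have $|u|<1$. They never coincide pointwise, and here $x^d\neq1$, so even the orbits are disjoint.

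Two small points. First, the linear exponent in your bound for $\mathcal B_d^{[x]}$ should be $|q|^{(m-1)n^2-mn}$. Second, on compacta you need $(u^d;q^d)_n$ bounded below uniformly in $n$, so the compacta must avoid the zeros of $(u^d;q^d)_\infty$. Neither affects the argument. With the termwise step fixed as above, your proof is complete and agrees with the paper's, which simply asserts termwise simplicity.
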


\begin{proof}
    First note that $w\mapsto w+1$ leaves $u=e^{2\pi i w}$ unchanged, so every function of $u$ alone is $1$-periodic.

Using the iterate
\[
P_d(q^n u;q)
=
q^{-n(m+1)-n(n-1)}u^{-2n}
\frac{(u;q)_n}{(u^d;q^d)_n}P_d(u;q),
\]
the \(n\)-th summand of \(\mathcal B_d^{[x]}\) is bounded, away from the pole set, by a constant times
\[
|q|^{(m-1)n^2-mn}.
\]
For the \(A\)-kernels, we obtain the bound
\[
|q|^{(m-1)n^2-(m-1)n}.
\]
Since \(m\geq 2\), these are positive quadratic exponents in \(n\), apart from finitely many initial terms. Hence all three series converge locally uniformly away from their pole sets.

The shift identity for \(\mathcal B_d^{[x]}\) is obtained by the usual index shift:
\[
\mathcal B_d^{[x]}(\tau,w+\tau)
=
\sum_{n\geq 0}
x^{-2n-2}q^{mn^2}(qu)^{2mn}P_d(q^{n+1}u;q).
\]
Since \(mn^2+2mn=m(n+1)^2-m\), the substitution \(s=n+1\) gives
\[
\mathcal B_d^{[x]}(\tau,w+\tau)
=
q^{-m}u^{-2m}
\sum_{s\geq 1}
x^{-2s}q^{ms^2}u^{2ms}P_d(q^s u;q).
\]
Comparing this with \(x^2\mathcal B_d^{[x]}(\tau,w)\) gives
\[
\mathcal B_d^{[x]}(\tau,w+\tau)
=
x^2q^{-m}u^{-2m}\mathcal B_d^{[x]}(\tau,w)
-
q^{-m}u^{-2m}P_d(u;q).
\]

For \(\mathcal A_{d,r}^{[x]}\), we have
\[
\mathcal A_{d,r}^{[x]}(\tau,w+\tau)
=
\sum_{n\geq 0}
x^{-2n-2}q^{mn^2+n}(qu)^{2mn+1}
\frac{P_d(q^{n+1}u;q)}{1-\omega^r q^{n+1}u}.
\]
Now, we note that
\[
mn^2+n+2mn+1=m(n+1)^2+(n+1)-m.
\]
Putting \(s=n+1\), we obtain
\[
\mathcal A_{d,r}^{[x]}(\tau,w+\tau)
=
q^{-m}u^{-2m}
\sum_{s\geq 1}
x^{-2s}q^{ms^2+s}u^{2ms+1}
\frac{P_d(q^s u;q)}{1-\omega^r q^s u}.
\]
Comparing this with \(x^2\mathcal A_{d,r}^{[x]}(\tau,w)\) gives
\[
\mathcal A_{d,r}^{[x]}(\tau,w+\tau)
=
x^2q^{-m}u^{-2m}\mathcal A_{d,r}^{[x]}(\tau,w)
-
q^{-m}u^{-2m}
\frac{uP_d(u;q)}{1-\omega^r u}.
\]
The proof for \(\mathcal A_{d,\mathrm{ext}}^{[x]}\) is identical, with \(1-\omega^r q^n u\) replaced by \(1-xq^n u\). Thus
\[
\mathcal A_{d,\mathrm{ext}}^{[x]}(\tau,w+\tau)
=
x^2q^{-m}u^{-2m}\mathcal A_{d,\mathrm{ext}}^{[x]}(\tau,w)
-
q^{-m}u^{-2m}
\frac{uP_d(u;q)}{1-xu}.
\]

Finally, we justify the pole assertion, including the order of the poles. By Lemma~\ref{lem:F-meromorphic}, the factor \(\mathcal G_{d,x}(u;q)\) has at most simple poles at the points \(u=x^{-1}q^{-n}\), hence on the orbit \(w\equiv-\alpha\). The product formula for \(P_d(u;q)\) shows that \(P_d\) has at most simple poles on the nonzero standard \(d\)-torsion orbits; the possible zero-orbit poles are cancelled by the factor \((q/u;q)_\infty\). Since \(x^d\neq 1\), the extra orbit \(w\equiv-\alpha\) is disjoint from the standard \(d\)-torsion orbits, so the possible poles of \(\Phi_{d,x}=P_d\mathcal G_{d,x}\) are at most simple.

It remains to check that the one-sided correction kernels do not create higher-order poles by summing infinitely many shifted summands on the same elliptic orbit. Fix a point \(w_0\) on one of the listed torsion orbits and take a small disk \(D\) in the \(w\)-plane meeting no other listed orbit. Each summand of \(\mathcal B_d^{[x]}\), \(\mathcal A_{d,r}^{[x]}\), and \(\mathcal A_{d,\mathrm{ext}}^{[x]}\) has at most a simple pole in \(D\). After multiplication by \(w-w_0\), the possibly vanishing linear denominator at \(w_0\) is cancelled, while all other denominator factors are bounded away from zero on a slightly smaller disk. The estimates already obtained above,
\[
        O\!\left(|q|^{(m-1)n^2-mn}\right)
        \quad\text{for }\mathcal B_d^{[x]},
        \qquad
        O\!\left(|q|^{(m-1)n^2-(m-1)n}\right)
        \quad\text{for the }\mathcal A\text{-kernels},
\]
therefore also bound the multiplied summands \((w-w_0)\) times each summand. These bounds are summable because \(m\ge2\). Lemma~\ref{lem:normal-simple-poles} implies that the kernels have at most simple poles on the stated orbits and no other poles. Since \(\mathcal H_{d,x}\) is a finite linear combination of these functions and \(\Phi_{d,x}\), it also has at most simple poles on the set \(\Sigma_{d,x}\). This proves the last assertion.
\end{proof}

\begin{corollary}
If we define $\mathcal H_{d,x}(\tau,w)$ as in \eqref{eq:intro-H-definition}, then
\begin{equation}\label{eq:H-elliptic-general}
\mathcal H_{d,x}(\tau,w+1)=\mathcal H_{d,x}(\tau,w),
\qquad
\mathcal H_{d,x}(\tau,w+\tau)=x^2q^{-m}u^{-2m}\mathcal H_{d,x}(\tau,w).
\end{equation}
\end{corollary}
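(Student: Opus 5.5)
The plan is to combine the open elliptic law for $\Phi_{d,x}$ from Proposition~\ref{prop:ellipticPhi} with the shift identities for the three kernels from Proposition~\ref{prop:allappelltypesums}. We then check that the inhomogeneous terms cancel term by term. Periodicity under $w\mapsto w+1$ is immediate: $\Phi_{d,x}$, $\mathcal B_d^{[x]}$, $\mathcal A_{d,r}^{[x]}$ and $\mathcal A_{d,\mathrm{ext}}^{[x]}$ depend on $w$ only through $u=e^{2\pi i w}$. The coefficients $X_0^{(m)}(x;q)$, $D_{r,x}(q)$ and $E_x(q)$ depend only on $\tau$. Hence $\mathcal H_{d,x}(\tau,w+1)=\mathcal H_{d,x}(\tau,w)$.

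For the $\tau$-shift, write $\mathcal H_{d,x}=\Phi_{d,x}-X_0\mathcal B_d^{[x]}+\sum_r D_{r,x}\mathcal A_{d,r}^{[x]}+E_x\mathcal A_{d,\mathrm{ext}}^{[x]}$ and apply $w\mapsto w+\tau$ to each piece. Each piece transforms as $x^2q^{-m}u^{-2m}$ times itself, plus an inhomogeneous term.

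\begin{itemize}
\item From \eqref{eq:Phi-open-elliptic-general}, the defect of $\Phi_{d,x}$ is
\[
q^{-m}u^{-2m}P_d(u;q)\Bigl(-X_0+\sum_r D_{r,x}\tfrac{u}{1-\omega^ru}+E_x\tfrac{u}{1-xu}\Bigr).
\]
\item The term $-X_0\mathcal B_d^{[x]}$ contributes $+X_0\,q^{-m}u^{-2m}P_d(u;q)$, which cancels the constant part.
\item Each term $D_{r,x}\mathcal A_{d,r}^{[x]}$ contributes $-D_{r,x}\,q^{-m}u^{-2m}\,uP_d(u;q)/(1-\omega^ru)$, which cancels the $r$-th torsion term.
\item The term $E_x\mathcal A_{d,\mathrm{ext}}^{[x]}$ contributes $-E_x\,q^{-m}u^{-2m}\,uP_d(u;q)/(1-xu)$, which cancels the extra pole term.
\end{itemize}

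Summing these, all inhomogeneous terms vanish and $\mathcal H_{d,x}(\tau,w+\tau)=x^2q^{-m}u^{-2m}\mathcal H_{d,x}(\tau,w)$.

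The main point to watch is the sign convention. The definition \eqref{eq:intro-H-definition} carries $-X_0\mathcal B$ but $+D\mathcal A$ and $+E\mathcal A_{\mathrm{ext}}$, while the defect in \eqref{eq:Phi-open-elliptic-general} carries $-X_0$ but $+D$ and $+E$. The cancellation works because every kernel shift identity has a minus sign on its inhomogeneous term. I would verify this bookkeeping carefully, including the factor $u$ in the $\mathcal A$-kernel identities, which matches the factor $u$ obtained by multiplying the partial fraction \eqref{eq:partial-fractions-general} by $u$.

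The identity holds on the complement of the pole orbits $\Sigma_{d,x}$, where all the series converge by Proposition~\ref{prop:allappelltypesums}. Since both sides are meromorphic, it then extends as an identity of meromorphic functions.
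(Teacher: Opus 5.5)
Your proposal is correct and follows the paper's proof: with $\varepsilon_x=1$ (since $x^d\neq1$ in this section), you combine the open elliptic law \eqref{eq:Phi-open-elliptic-general} for $\Phi_{d,x}$ with the three kernel shift identities of Proposition~\ref{prop:allappelltypesums} and cancel the inhomogeneous terms term by term, and you get $1$-periodicity because everything depends on $w$ only through $u$. Your sign bookkeeping is right, including the factor $u$ that comes from multiplying the partial fraction \eqref{eq:partial-fractions-general} by $u$.
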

\begin{proof}
    Since we have assumed $x^d \neq 1$, we have $\varepsilon_x = 1$. Thus, \eqref{eq:intro-H-definition} reduces to    \begin{equation}\label{eq:H-general}
\begin{aligned}
\mathcal H_{d,x}(\tau,w)
:={}&
\Phi_{d,x}(\tau,w)
-
X_0^{(m)}(x;q)\mathcal B_d^{[x]}(\tau,w)\\
&+
\sum_{r=0}^{d-1}D_{r,x}(q)\mathcal A_{d,r}^{[x]}(\tau,w)
+
E_x(q)\mathcal A_{d,\mathrm{ext}}^{[x]}(\tau,w).
\end{aligned}
\end{equation}
    Compare the defect in \eqref{eq:Phi-open-elliptic-general} with the three kernel identities. The defects cancel term by term, so the function $\mathcal H_{d,x}$ defined in \eqref{eq:H-general} satisfies \eqref{eq:H-elliptic-general}.
\end{proof}

\begin{theorem}\label{thm:general-root:ivplusv}
 We let
\[
\lambda_x:=\frac{\alpha}{m},
\qquad
\widetilde{\mathcal H}_{d,x}(\tau,w):=\mathcal H_{d,x}(\tau,w+\lambda_x),
\]
so that $\widetilde{\mathcal H}_{d,x}$ satisfies the standard index-$m$ elliptic law
\[
\widetilde{\mathcal H}_{d,x}(\tau,w+1)=\widetilde{\mathcal H}_{d,x}(\tau,w),
\qquad
\widetilde{\mathcal H}_{d,x}(\tau,w+\tau)=q^{-m}u^{-2m}\widetilde{\mathcal H}_{d,x}(\tau,w).
\]
Let $\widetilde\Sigma_{d,x}:=\Sigma_{d,x}-\lambda_x\subset \QQ/\ZZ$. The possible poles of $\widetilde{\mathcal H}_{d,x}$ are at most simple and are contained in the orbits represented by $\widetilde\Sigma_{d,x}$. For each $\beta\in\widetilde\Sigma_{d,x}$, write
\[
\widetilde{\mathcal H}_{d,x}(\tau,\beta+\varepsilon)
=
\frac{c_{\beta,x}(\tau)}{2\pi i\varepsilon}+\OO(1)
\qquad (\varepsilon\to 0).
\]
The functions \(c_{\beta,x}(\tau)\) are holomorphic in \(\tau\).
%The elliptic transformation law propagates these principal parts to every point in the corresponding lattice orbit, so it is enough to subtract one Appell--Lerch kernel for each representative \(\beta\in\widetilde\Sigma_{d,x}\).
Define
\begin{equation}\label{eq:H-polar-general}
\widetilde{\mathcal H}_{d,x}^{P,\mathrm{loc}}(\tau,w)
:=
\sum_{\beta\in\widetilde\Sigma_{d,x}}c_{\beta,x}(\tau)A_{m,\beta}(\tau,w).
\end{equation}
Then the difference
\begin{equation}\label{eq:H-finite-general}
\widetilde{\mathcal H}_{d,x}^{F,\mathrm{loc}}(\tau,w)
:=
\widetilde{\mathcal H}_{d,x}(\tau,w)-\widetilde{\mathcal H}_{d,x}^{P,\mathrm{loc}}(\tau,w)
\end{equation}
is holomorphic in $w$ and satisfies the standard elliptic law above. Hence, there exist unique holomorphic coefficient functions $h_{\ell,x}(\tau)$ such that
\begin{equation}\label{eq:H-theta-general}
\widetilde{\mathcal H}_{d,x}^{F,\mathrm{loc}}(\tau,w)
=
\sum_{\ell\, (\mathrm{mod}\,2m)} h_{\ell,x}(\tau)\,\vartheta_{m,\ell}(\tau,w).
\end{equation}
Equivalently, the unshifted holomorphic part $\mathcal H_{d,x}^{F,\mathrm{loc}}(\tau,w):=\widetilde{\mathcal H}_{d,x}^{F,\mathrm{loc}}(\tau,w-\lambda_x)$ satisfies
\[
\mathcal H_{d,x}^{F,\mathrm{loc}}(\tau,w)
=
\sum_{\ell\, (\mathrm{mod}\,2m)} h_{\ell,x}(\tau)\,\vartheta_{m,\ell}\!\left(\tau,w-\frac{\alpha}{m}\right).
\]
\end{theorem}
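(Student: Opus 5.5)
The argument has three steps. First, remove the twist by the translation; second, check that the pole structure survives the translation; third, apply Proposition~\ref{prop:local-polar-subtraction} to \(\widetilde{\mathcal H}_{d,x}\).

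For the translation, write \(w'=w+\lambda_x\) and \(u'=e^{2\pi i w'}=e^{2\pi i\alpha/m}u\). Periodicity under \(w\mapsto w+1\) is immediate from \eqref{eq:H-elliptic-general}. For the \(\tau\)-shift, \eqref{eq:H-elliptic-general} gives
\[
\widetilde{\mathcal H}_{d,x}(\tau,w+\tau)=x^2q^{-m}(u')^{-2m}\mathcal H_{d,x}(\tau,w').
\]
Since \((u')^{-2m}=e^{-4\pi i\alpha}u^{-2m}=x^{-2}u^{-2m}\), the factor \(x^2\) cancels exactly. What remains is \(q^{-m}u^{-2m}\widetilde{\mathcal H}_{d,x}(\tau,w)\), which is the standard index \(m\) law. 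This is the whole reason for the choice \(\lambda_x=\alpha/m\). Any choice with \(2m\lambda_x\equiv\alpha\pmod 1\) would work equally well.

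For the poles, Proposition~\ref{prop:allappelltypesums} shows that \(\mathcal H_{d,x}\) is jointly meromorphic, with at most simple poles lying on the orbits of \(\Sigma_{d,x}\). A constant real translation carries these orbits to the orbits of \(\widetilde\Sigma_{d,x}=\Sigma_{d,x}-\lambda_x\), and it preserves both pole orders and joint meromorphy. The translated set \(\widetilde\Sigma_{d,x}\) is finite and consists of rational numbers. Together with the first step, this verifies every hypothesis of Proposition~\ref{prop:local-polar-subtraction} for \(H=\widetilde{\mathcal H}_{d,x}\) and \(S=\widetilde\Sigma_{d,x}\).

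Proposition~\ref{prop:local-polar-subtraction} then yields all remaining conclusions directly. The residue coefficients \(c_{\beta,x}(\tau)\) are holomorphic, because \(2\pi i\varepsilon\,\widetilde{\mathcal H}_{d,x}(\tau,\beta+\varepsilon)\) extends holomorphically across \(\varepsilon=0\). The difference \(\widetilde{\mathcal H}^{F,\mathrm{loc}}_{d,x}\) is holomorphic and satisfies the untwisted law, since each \(A_{m,\beta}\) satisfies \eqref{eq:appell-elliptic}. Lemma~\ref{lem:theta-decomposition} then supplies the unique holomorphic \(h_{\ell,x}\). The unshifted formula follows by substituting \(w\mapsto w-\lambda_x\).

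One subtlety must be addressed before applying the proposition. Proposition~\ref{prop:local-polar-subtraction} subtracts principal parts only at the representatives \(\beta\). By the elliptic law, and because the multiplier \(q^{-m}u^{-2m}\) is holomorphic and nonvanishing, a pole at \(\beta\) forces matching principal parts at every point \(\beta+a\tau+b\). The same holds for \(A_{m,\beta}\). Hence cancellation at one representative propagates to the whole orbit.

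The main obstacle is joint meromorphy in \((\tau,w)\), needed so that \(c_{\beta,x}\) varies holomorphically in \(\tau\). Proposition~\ref{prop:allappelltypesums} gives normal convergence of the kernels for fixed \(q\). I would check that the bounds \(|q|^{(m-1)n^2-mn}\) hold uniformly for \(\tau\) in a compact set. I would also check that the coefficients \(X_0^{(m)}(x;q)\), \(D_{r,x}(q)\) and \(E_x(q)\) are holomorphic in \(\tau\). This follows from their explicit formulas \eqref{eq:D-general-explicit} in terms of the \(X_j\) and \(G_d(x;q)\), together with the Laurent series in \eqref{eq:def-R-general}.
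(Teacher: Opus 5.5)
Your proof is correct and follows the paper's argument: translate by $\lambda_x=\alpha/m$ so that $x^2e^{-4\pi i m\lambda_x}=1$, move the pole representatives to $\Sigma_{d,x}-\lambda_x$, and then apply Proposition~\ref{prop:local-polar-subtraction} and Lemma~\ref{lem:theta-decomposition}. Your remarks on propagating principal parts along each orbit and on joint meromorphy fill in details the paper only asserts; one aside is wrong, though: the translations that work are those with $2m\lambda_x\equiv 2\alpha\pmod 1$, not $2m\lambda_x\equiv\alpha\pmod 1$, since the latter leaves a residual multiplier $x$.
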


\begin{proof}
Let $\lambda_x:=\alpha/m$ and define
\[
\widetilde{\mathcal H}_{d,x}(\tau,w):=\mathcal H_{d,x}(\tau,w+\lambda_x).
\]
Because $\mathcal H_{d,x}$ satisfies \eqref{eq:H-elliptic-general} and $x=e^{2\pi i\alpha}$, we obtain
\[
\widetilde{\mathcal H}_{d,x}(\tau,w+\tau)
=
x^2e^{-4\pi i m\lambda_x}q^{-m}u^{-2m}\widetilde{\mathcal H}_{d,x}(\tau,w)
=
q^{-m}u^{-2m}\widetilde{\mathcal H}_{d,x}(\tau,w),
\]
while $\widetilde{\mathcal H}_{d,x}(\tau,w+1)=\widetilde{\mathcal H}_{d,x}(\tau,w)$ remains immediate. Thus $\widetilde{\mathcal H}_{d,x}$ satisfies the standard index-$m$ elliptic law. Its possible poles are contained in the translated constant torsion orbits represented by $\widetilde\Sigma_{d,x}=\Sigma_{d,x}-\lambda_x$.

Proposition~\ref{prop:local-polar-subtraction} therefore applies directly to $\widetilde{\mathcal H}_{d,x}$, giving \eqref{eq:H-polar-general}, \eqref{eq:H-finite-general}, and the theta decomposition \eqref{eq:H-theta-general}. Pulling back by the translation $w\mapsto w-\lambda_x$ yields the equivalent shifted-theta expansion for $\mathcal H_{d,x}^{F,\mathrm{loc}}$.
\end{proof}

\subsection[The d-th torsion slice x to the d equals 1]{\texorpdfstring{The $d$-th torsion slice $x^d=1$}{The d-th torsion slice x to the d equals 1}}

We now turn to the case that is most directly relevant to the higher Dyson specialization. Here the extra factor $1-xu$ cancels from the defect. The specialization $x=1$ gives a torsion-pole theorem, whereas the nontrivial $d$-th roots of unity retain a residual $x^2$-twist in the elliptic law.

\begin{corollary}\label{cor:torsion-slice}
Let $x$ be a $d$-th root of unity.

\begin{enumerate}[label=\textnormal{(\roman*)}]
\item There exists a unique polynomial $R_{d,x}(u;q)\in q^{-1}\CC[[q]][u]$ of degree at most $d-2$ with constant term
\[
R_{d,x}(0;q)=-X_0^{(m)}(x;q),
\]
such that
\[
\mathcal G_{d,x}(qu;q)
=
x^2q\frac{u^d-1}{u^{d-3}(u-1)}\mathcal G_{d,x}(u;q)
+
\frac{q}{u^{d-3}}R_{d,x}(u;q)
-
\delta_{x,1}\,d q\frac{u^2}{1-u}G_d(1;q),
\]
where $\delta_{x,1}$ is $1$ if $x=1$ and $0$ otherwise.

The normalized series
\[
K_{d,x}(u;q):=\frac{M_d(u;q)}{u}\mathcal G_{d,x}(u;q),\qquad M_d(u;q):=\frac{(u^d;q^d)_\infty}{(u;q)_\infty J(u;q)^{d-3}},
\]
satisfies
\begin{equation}\label{eq:defect-torsion-slice}
K_{d,x}(qu;q)-x^2K_{d,x}(u;q)
=
\frac{M_d(u;q)}{u(1-u^d)}S_{d,x}(u;q),
\end{equation}
where
\[
S_{d,x}(u;q):=
\begin{cases}
(1-u)R_{d,1}(u;q)-du^{d-1}G_d(1;q), & x=1,\\[4pt]
(1-u)R_{d,x}(u;q), & x\neq 1,\ x^d=1.
\end{cases}
\]

\item There exist unique Laurent series $D_{r,x}(q)\in q^{-1}\CC[[q]]$ $(0\leq r\leq d-1)$ such that $S_{d,x}$ admits the partial fraction decomposition
\[
\frac{S_{d,x}(u;q)}{u(1-u^d)}
=
-\frac{X_0^{(m)}(x;q)}{u}
+
\sum_{r=0}^{d-1}\frac{D_{r,x}(q)}{1-\omega^r u},
\]
with
\[
D_{r,1}(q)=\frac{\omega^r}{d}S_{d,1}(\omega^{-r};q),\quad \text{and} \quad
D_{r,x}(q)=\frac{\omega^r}{d}(1-\omega^{-r})R_{d,x}(\omega^{-r};q).
\]
The function $K_{d,x}(u;q)$ then decomposes as
\begin{equation}\label{eq:Kd-torsion-slice}
K_{d,x}(u;q)
=
X_0^{(m)}(x;q)\Psi_d^{[x]}(u;q)-\sum_{r=0}^{d-1}D_{r,x}(q)\Omega_{d,r}^{[x]}(u;q),
\end{equation}
where
\[
\Psi_d^{[x]}(u;q):=\sum_{n\geq 0}x^{-2n-2}\frac{q^{-n}M_d(q^nu;q)}{u},
\qquad
\Omega_{d,r}^{[x]}(u;q):=\sum_{n\geq 0}x^{-2n-2}\frac{M_d(q^nu;q)}{1-\omega^rq^nu}.
\]
For $x=1$ these kernels reduce to the untwisted kernels $\Psi_d$ and $\Omega_{d,r}$ from Theorem~\ref{thm:general-root:iii}.

\item Let $\Phi_{d,x}(\tau,w):=P_d(u;q)\mathcal G_{d,x}(u;q)$. All possible poles of
\[
\mathcal H_{d,x}(\tau,w):=\Phi_{d,x}(\tau,w)-X_0^{(m)}(x;q)\mathcal B_d^{[x]}(\tau,w)+\sum_{r=0}^{d-1}D_{r,x}(q)\mathcal A_{d,r}^{[x]}(\tau,w)
\]
are at most simple and are contained in the standard \(d\)-torsion orbits
\[
w\equiv -\frac{r}{d}\pmod{\ZZ\tau+\ZZ}
\qquad (0\leq r\leq d-1).
\]
Moreover, $\mathcal H_{d,x}$ satisfies
\[
\mathcal H_{d,x}(\tau,w+1)=\mathcal H_{d,x}(\tau,w),
\qquad
\mathcal H_{d,x}(\tau,w+\tau)=x^2q^{-m}u^{-2m}\mathcal H_{d,x}(\tau,w).
\]
Here \(\mathcal B_d^{[x]}\) is the kernel from Proposition~\ref{prop:allappelltypesums}, and
\(\mathcal A_{d,r}^{[x]}\) denotes the corrected kernel
\[
\mathcal A_{d,r}^{[x]}(\tau,w)
:=
\sum_{n\geq 0}
x^{-2n-2}q^{mn^2+n}u^{2mn+1}
\frac{P_d(q^n u;q)}{1-\omega^r q^n u}.
\]
The shifted function
%Its shifted holomorphic part
\[
\widetilde{\mathcal H}_{d,x}(\tau,w):=\mathcal H_{d,x}(\tau,w+\lambda_x), \quad x=e^{2\pi i\alpha}, \, \lambda_x:=\alpha/m
\]
satisfies
\[
\widetilde{\mathcal H}_{d,x}(\tau,w+1)=\widetilde{\mathcal H}_{d,x}(\tau,w),
\qquad
\widetilde{\mathcal H}_{d,x}(\tau,w+\tau)=q^{-m}u^{-2m}\widetilde{\mathcal H}_{d,x}(\tau,w).
\]
We let
\[
\widetilde\Sigma_{d,x}:=
\left\{-\frac rd:0\le r\le d-1\right\}-\lambda_x
\subset \QQ/\ZZ.
\]
For each \(\beta\in\widetilde\Sigma_{d,x}\), define \(c_{\beta,x}(\tau)\) by
\[
\widetilde{\mathcal H}_{d,x}(\tau,\beta+\varepsilon)
=
\frac{c_{\beta,x}(\tau)}{2\pi i\,\varepsilon}+\OO(1)
\qquad (\varepsilon\to0).
\]
The functions \(c_{\beta,x}(\tau)\) are holomorphic in \(\tau\).
Set
\[
\widetilde{\mathcal H}_{d,x}^{P,\mathrm{loc}}(\tau,w)
:=
\sum_{\beta\in\widetilde\Sigma_{d,x}}c_{\beta,x}(\tau)A_{m,\beta}(\tau,w),
\qquad
\widetilde{\mathcal H}_{d,x}^{F,\mathrm{loc}}
:=
\widetilde{\mathcal H}_{d,x}
-
\widetilde{\mathcal H}_{d,x}^{P,\mathrm{loc}}.
\]
Then \(\widetilde{\mathcal H}_{d,x}^{F,\mathrm{loc}}\) is holomorphic in \(w\) and has the theta decomposition
\begin{equation}\label{eq:torsion-theta-remainder}
\widetilde{\mathcal H}_{d,x}^{F,\mathrm{loc}}(\tau,w)
=
\sum_{\ell\, (\mathrm{mod}\,2m)} h_{\ell,x}(\tau)\vartheta_{m,\ell}(\tau,w).
\end{equation}
Equivalently, $
\mathcal H_{d,x}^{F,\mathrm{loc}}(\tau,w)
=
\sum_{\ell\, (\mathrm{mod}\,2m)} h_{\ell,x}(\tau)\vartheta_{m,\ell}\!\left(\tau,w-\frac{\alpha}{m}\right).
$

\item The nonzero part of the standard \(d\)-torsion pole set in
part~\textnormal{(iii)} is carried by the explicit Jacobi form
\(\mathcal J_{m,d}(\tau,w)\) from Proposition~\ref{prop:jacobi-carrier}.
The zero orbit, when it occurs, is accounted for by the local
Appell--Lerch polar subtraction.
\end{enumerate}
\end{corollary}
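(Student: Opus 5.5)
The plan is to rerun the argument of Subsection~\ref{sec:rootofunitynot1} with $x^d=1$, tracking where $C_x$ and the factor $1-xu$ change.

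\textbf{Part (i).} Proposition~\ref{prop:recurrence} applies verbatim. The right-hand side is $C_xG_d(x;q)$, and $C_x=0$ for $x\neq1$, $x^d=1$, while $C_1=d$. I will repeat the generating-function computation in the proof of Theorem~\ref{thm:general-root:iplusii}: extract $X_{-1}$ and $X_{-2}$ from the cases $j=1,0$, sum against $t^j$, substitute $t=xu$, and define $R_{d,x}$ by \eqref{eq:def-R-general}. Since that derivation never divides by $1-x^d$, it goes through unchanged. The defect term $-C_xq\,u^2G/(1-xu)$ becomes $-d\,q\,u^2G_d(1;q)/(1-u)$ when $x=1$ and vanishes when $x\neq1$. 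Uniqueness and the constant term $-X_0$ follow exactly as before. Multiplying by $M_d(qu;q)/(qu)$ and using \eqref{eq:M-shift-general} gives \eqref{eq:defect-torsion-slice}:
\begin{itemize}
\item for $x=1$, the factor $(1-u)$ cancels the denominator $1-xu=1-u$, leaving $S_{d,1}$;
\item for $x\neq1$, no extra term occurs at all.
\end{itemize}

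\textbf{Part (ii).} The denominator $u(1-u^d)=u\prod_r(1-\omega^ru)$ has $d+1$ distinct linear factors, and $\deg S_{d,x}\le d-1<d+1$. So the partial fraction decomposition exists and is unique. Its $1/u$ coefficient is $S_{d,x}(0)=R_{d,x}(0)=-X_0$, and the $D_{r,x}$ are residues obtained by evaluating at $u=\omega^{-r}$. The kernel identities \eqref{eq:qshift-correction-general} and \eqref{eq:qshift-omega-r} hold for any $|x|=1$, by the same index shift. The decomposition \eqref{eq:Kd-torsion-slice} then follows from the argument in Theorem~\ref{thm:general-root:iii}. The difference $\widehat K$ satisfies $\widehat K(qu)=x^2\widehat K(u)$, and $\widehat K(q^Mu)\to0$ because $d-3\ge2$ and $\mathcal G(q^Mu)$ stays bounded. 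Since $|x|=1$, this forces $\widehat K\equiv0$.

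\textbf{Part (iii).} Multiplying by $P_d$ and using \eqref{eq:P-shift-general} gives the analogue of Proposition~\ref{prop:ellipticPhi} without the $E_x$ term. The shift laws of Proposition~\ref{prop:allappelltypesums} then give the twisted law, and the translation by $\lambda_x$ removes the twist because $x^2e^{-4\pi i m\lambda_x}=1$.

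The step I expect to need the most care is the pole analysis.
\begin{itemize}
\item For $x=1$, the poles of $\mathcal G_{d,1}$ at $u=q^{-n}$ lie on the zero orbit, which is a $d$-torsion orbit. There $P_d$ has a factor $(q/u;q)_\infty$ that could interact with them.
\item For $x=\omega^k$, the poles of $\mathcal G$ at $u=\omega^{-k}q^{-n}$ fall on an orbit where $P_d$ itself has a pole, so a double pole is a priori possible.
\end{itemize}
My first approach is the product formula: $(q/u;q)_\infty/(q^d/u^d;q^d)_\infty$ has denominator $\prod_{s}(q^{ds}/u^d)$-factors. Writing $1-q^{ds}u^{-d}=\prod_r(1-\omega^rq^su^{-1})$, the factor with $r=0$ cancels against $(q/u;q)_\infty$. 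The remaining simple poles then combine with the simple poles of $\mathcal G$ at $xuq^n=1$.

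If a genuine double pole survives, I would instead argue through $K_{d,x}$. The decomposition \eqref{eq:Kd-torsion-slice} expresses $K$, and hence $\Phi=(P_d u/M_d)K$, as a sum of kernels each having simple poles. This needs checking that $P_du/M_d$ is holomorphic and nonvanishing near those orbits. Lemma~\ref{lem:normal-simple-poles} with the existing estimates handles the kernels.

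Once simplicity is established, Proposition~\ref{prop:local-polar-subtraction} yields the $c_{\beta,x}$ and the theta decomposition \eqref{eq:torsion-theta-remainder}.

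\textbf{Part (iv).} This follows directly from Proposition~\ref{prop:jacobi-carrier}, which has simple poles exactly on the nonzero $d$-torsion orbits. The zero orbit is absorbed into $A_{m,\beta}$ with $\beta=-\lambda_x$.
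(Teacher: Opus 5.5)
Apart from one step, your plan is the paper's proof. You rerun Theorems~\ref{thm:general-root:iplusii}--\ref{thm:general-root:ivplusv} with $C_1=d$, so that $1-u$ cancels and $T_{d,1}=(1-u)S_{d,1}$, and with $C_x=0$ for $x\neq1$. You take partial fractions over the $d+1$ distinct linear factors of $u(1-u^d)$, kill the twisted-periodic remainder $\widehat K$ via $u\mapsto q^Mu$, translate by $\lambda_x$, and finish with Propositions~\ref{prop:local-polar-subtraction} and~\ref{prop:jacobi-carrier}.

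The step you leave open is the claim that $\Phi_{d,x}=P_d\,\mathcal G_{d,x}$ has at most simple poles when $x^d=1$. You are right that this needs an argument. The orbit $w\equiv-\alpha$ carrying the poles of $\mathcal G_{d,x}$ now coincides with a $d$-torsion orbit carrying poles of $P_d$, so the disjoint-orbit argument from the case $x^d\neq1$ no longer applies; the paper simply asserts that the simple-pole argument ``applies verbatim.'' Neither of your arguments fills this in. Your first approach ends with the two pole families ``combining'' without showing that they never land on the same points. Your fallback rests on a false premise, since
\[
\frac{uP_d(u;q)}{M_d(u;q)}=u^{-m}J(-u;q)^{d+1}J(u;q)^{d-2}\,\frac{(q^d;q^d)_\infty}{(q;q)_\infty\,J(u^d;q^d)}
\]
has simple poles on every nonzero $d$-torsion orbit and a zero of order $d-3$ on the zero orbit. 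So it is neither holomorphic nor nonvanishing on the $d$-torsion orbits, and that check fails. Moreover, if $\Phi_{d,x}$ really had a double pole, no rewriting of it could remove one.

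The missing observation is that the two pole families lie on opposite halves of each orbit. Since
\[
\frac{(q/u;q)_\infty}{(q^d/u^d;q^d)_\infty}=\prod_{s\geq1}\prod_{r=1}^{d-1}\bigl(1-\omega^rq^su^{-1}\bigr)^{-1},
\]
$P_d$ is holomorphic on $\CC^\times$ except for simple poles at $u=\omega^rq^s$ with $s\geq1$ and $1\leq r\leq d-1$. All of these satisfy $|u|<1$. By Lemma~\ref{lem:F-meromorphic}, $\mathcal G_{d,x}$ has at most simple poles, located only at $u=x^{-1}q^{-n}$ with $n\geq0$, and all of these satisfy $|u|\geq1$. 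Hence the two pole sets are disjoint even when their orbits coincide, and $\Phi_{d,x}$ has at most simple poles, all on $d$-torsion orbits.

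The same one-sidedness shows that in the $n$-th summand of $\mathcal A_{d,r}^{[x]}$, the denominator pole at $u=\omega^{-r}q^{-n}$ differs from the poles of $P_d(q^nu;q)$. Those lie at $u=\omega^{r'}q^{j}$ with $j\geq1-n$. So every summand of $\mathcal B_d^{[x]}$ and $\mathcal A_{d,r}^{[x]}$ has at most simple poles, and Lemma~\ref{lem:normal-simple-poles} applies to the kernels as you intended. With this, the rest of your part~(iii) goes through.
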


\begin{proof}
If $x=1$, then Theorems~\ref{thm:general-root:iplusii}, \ref{thm:general-root:iii} and \ref{thm:general-root:ivplusv} do not apply because the factor $1-xu$ is identical with $1-u$; nevertheless the proofs of these theorems go through verbatim with $x=1$. Since $x^2=1$ in this case, the only difference is that the bracket in \eqref{eq:def-T-general} factors as
\[
T_{d,1}(u;q)=(1-u)\bigl((1-u)R_{d,1}(u;q)-du^{d-1}G_d(1;q)\bigr)=(1-u)S_{d,1}(u;q),
\]
so one cancels a factor $1-u$ and arrives at \eqref{eq:defect-torsion-slice}. The untwisted kernels in part~(ii) are exactly the specializations $\Psi_d^{[1]}=\Psi_d$ and $\Omega_{d,r}^{[1]}=\Omega_{d,r}$.

If $x\neq 1$ and $x^d=1$, then $C_x=0$, so \eqref{eq:main-functional-general} immediately reduces to
\[
\mathcal G_{d,x}(qu;q)
=
x^2q\frac{u^d-1}{u^{d-3}(u-1)}\mathcal G_{d,x}(u;q)
+
\frac{q}{u^{d-3}}R_{d,x}(u;q).
\]
In this case, we have
\[
T_{d,x}(u;q)=(1-xu)(1-u)R_{d,x}(u;q),
\]
and because $1-xu$ is one of the linear factors of $1-u^d$, the factor $1-xu$ cancels from the defect. This gives \eqref{eq:defect-torsion-slice} with $S_{d,x}(u;q)=(1-u)R_{d,x}(u;q)$. The partial fractions and the decomposition \eqref{eq:Kd-torsion-slice} are then proved exactly as in Theorem~\ref{thm:general-root:iii}, except that the additional kernel $\Omega_{d,\mathrm{ext}}^{[x]}$ is absent.

The same specialization also removes the extra orbit from the pole set, because \(x=e^{2\pi i\alpha}\) with \(\alpha\in \frac1d\ZZ/\ZZ\), so the orbit \(w\equiv -\alpha\) is already one of the standard \(d\)-torsion orbits. The simple-pole argument in the proof of Theorem~\ref{thm:general-root:ivplusv}, based on Lemma~\ref{lem:normal-simple-poles}, applies verbatim after the cancellation of the extra factor \(1-xu\). Hence the meromorphic lift has at most simple poles only on the standard \(d\)-torsion pole set, and it satisfies the twisted law
\[
\mathcal H_{d,x}(\tau,w+\tau)=x^2q^{-m}u^{-2m}\mathcal H_{d,x}(\tau,w).
\]
Translating by \(\lambda_x=\alpha/m\) removes this residual constant twist. Proposition~\ref{prop:local-polar-subtraction} then gives the stated local Appell--Lerch subtraction and the theta decomposition in \eqref{eq:torsion-theta-remainder}.

Finally, Proposition~\ref{prop:jacobi-carrier} shows that the nonzero part of the standard \(d\)-torsion pole set is carried by the explicit Jacobi form \(\mathcal J_{m,d}(\tau,w)\); the zero orbit is accounted for by the local Appell--Lerch subtraction.
\end{proof}

\subsection*{Proofs of Theorems~\ref{thm:intro-defect}--\ref{thm:intro-finite-part}}

\begin{proof}[Proof of Theorem~\ref{thm:intro-defect}]
Assume first that \(x^d\neq1\). The existence of the polynomial
\(R_{d,x}\), the degree bound, and the value \(R_{d,x}(0;q)=-X_0^{(m)}(x;q)\)
are exactly Theorem~\ref{thm:general-root:iplusii}. In this case
\(Q_{d,x}=T_{d,x}\), so the partial fraction identity
\eqref{eq:intro-partial-fraction} is Theorem~\ref{thm:general-root:iii}.
The normal convergence of the kernels in \eqref{eq:intro-B-kernel},
\eqref{eq:intro-Ar-kernel}, and \eqref{eq:intro-Aext-kernel} is proved in
Proposition~\ref{prop:allappelltypesums}.

If \(x^d=1\), the same polynomial and partial-fraction statements follow from
Corollary~\ref{cor:torsion-slice}\textnormal{(i)--(iii)}. For \(x=1\), the
term
\(C_xq u^2(1-xu)^{-1}G_d(x;q)\) in \eqref{eq:intro-R-defining-equation} is
\(dqu^2(1-u)^{-1}G_d(1;q)\). For \(x\neq1\) with \(x^d=1\), one has
\(C_x=0\). Thus \eqref{eq:intro-R-defining-equation} is the same functional
equation as in Corollary~\ref{cor:torsion-slice}\textnormal{(i)}. In the
torsion case \(Q_{d,x}=S_{d,x}\), so \eqref{eq:intro-partial-fraction} is
Corollary~\ref{cor:torsion-slice}\textnormal{(ii)}. The convergence of the
remaining correction kernels follows from
Corollary~\ref{cor:torsion-slice}\textnormal{(iii)}.
\end{proof}

\begin{proof}[Proof of Theorem~\ref{thm:intro-corrected-lift}]
For \(x^d\neq1\), the identities in \eqref{eq:intro-H-twisted-new}, the
meromorphy of \(\mathcal H_{d,x}\), and the possible-pole statement are
Theorem~\ref{thm:general-root:ivplusv}. The standard torsion orbits
are represented by \(-r/d\), \(0\le r\le d-1\), and the additional denominator
\(1-xq^n u\) gives the orbit represented by \(-\alpha\). For \(x^d=1\),
Corollary~\ref{cor:torsion-slice}\textnormal{(iv)} gives the same elliptic
identities and shows that the extra orbit coincides with one of the standard
\(d\)-torsion orbits. This is precisely the set \(\Sigma_{d,x}\) in
\eqref{eq:intro-Sigma-new}. In both cases the possible poles are at most
simple by the normal-convergence argument in
Proposition~\ref{prop:allappelltypesums}, using
Lemma~\ref{lem:normal-simple-poles}.

It remains to remove the constant twist. Let \(\lambda_x=\alpha/m\). Since
\(w\mapsto w+1\) leaves \(u\) unchanged, \eqref{eq:intro-H-twisted-new} gives
\(\widetilde{\mathcal H}_{d,x}(\tau,w+1)=\widetilde{\mathcal H}_{d,x}(\tau,w)\).
For the \(\tau\)-translation, the elliptic variable at \(w+\lambda_x\) is
\(e^{2\pi i\lambda_x}u\). Therefore
\[
\begin{aligned}
\widetilde{\mathcal H}_{d,x}(\tau,w+\tau)
&=\mathcal H_{d,x}(\tau,w+\lambda_x+\tau)\\
&=x^2q^{-m}\bigl(e^{2\pi i\lambda_x}u\bigr)^{-2m}
  \mathcal H_{d,x}(\tau,w+\lambda_x)\\
&=x^2e^{-4\pi i m\lambda_x}q^{-m}u^{-2m}
  \widetilde{\mathcal H}_{d,x}(\tau,w).
\end{aligned}
\]
Because \(x=e^{2\pi i\alpha}\) and \(m\lambda_x=\alpha\), the scalar
\(x^2e^{-4\pi i m\lambda_x}\) is \(1\). This proves
\eqref{eq:intro-H-untwisted-new}. Translating the variable by \(\lambda_x\)
translates the possible orbit representatives by \(-\lambda_x\), giving
\eqref{eq:intro-Sigma-tilde-new}.
\end{proof}

\begin{proof}[Proof of Theorem~\ref{thm:intro-finite-part}]
By Theorem~\ref{thm:intro-corrected-lift}, the function
\(\widetilde{\mathcal H}_{d,x}\) satisfies the standard index-\(m\) elliptic
law and has possible poles at most simple on the finite set of constant
torsion orbits represented by \(\widetilde\Sigma_{d,x}\). The construction of
\(\mathcal H_{d,x}\) from normally convergent products and Appell--Lerch type
series is locally uniform away from those possible poles, so
\(\widetilde{\mathcal H}_{d,x}\) is jointly meromorphic in \((\tau,w)\).
Proposition~\ref{prop:local-polar-subtraction} applies and shows that the
coefficients \(c_{\beta,x}(\tau)\) are holomorphic in \(\tau\), that the local
Appell--Lerch subtraction in \eqref{eq:intro-polar-subtraction-new} removes all
principal parts, and that
\(\widetilde{\mathcal H}^{F,\mathrm{loc}}_{d,x}\) is holomorphic and satisfies
\eqref{eq:intro-H-untwisted-new}.

Applying Lemma~\ref{lem:theta-decomposition} to the holomorphic function
\(\widetilde{\mathcal H}^{F,\mathrm{loc}}_{d,x}\) gives the theta expansion
\eqref{eq:intro-theta-decomp-new} and the uniqueness of the holomorphic
coefficient functions \(h_{\ell,x}(\tau)\).
\end{proof}

\section{The first new case: \texorpdfstring{$m=2$}{m=2} and level \texorpdfstring{$5$}{5}}\label{sec:level5}

We finish by recording the first genuinely higher case. The purpose of this section is illustrative: it unwraps the finite defect and the Appell--Lerch correction in the smallest level where the higher Dyson system is not Dyson's classical rank. Thus we set
$m=2$, $d=5$, and $\omega=e^{2\pi i/5}$. The relevant generating series are
\[
G_5(x;q)=\frac{(x^5q^5;q^5)_\infty}{(xq;q)_\infty},
\qquad
X_j^{(2)}(x;q)=\sum_{n\geq 0}q^{n^2+jn}G_5(xq^n;q).
\]

\subsection{The root-of-unity specialization attached to \texorpdfstring{$\RRm_2(\zeta_5;q)$}{R2(zeta5;q)}}

The specialization attached to \(\RRm_2(\zeta_5;q)\) is \(x=1\), since
\[
X_0^{(2)}(1;q)=\frac{(q^5;q^5)_\infty}{(q;q)_\infty}\RRm_2(\zeta_5;q).
\]
Define
\[
\mathcal G_{5,1}(u;q):=\sum_{j\geq 0}X_j^{(2)}(1;q)u^j
\]
and
\begin{displaymath}
\begin{split}
R_{5,1}(u;q)
:=
\bigl(q^{-1}u^2-&u^3-u^2-u-1\bigr)X_0^{(2)}(1;q) \\
&-\bigl(u^2+u\bigr)X_1^{(2)}(1;q)
-
\bigl(u^3+u^2\bigr)X_2^{(2)}(1;q)
-u^3X_3^{(2)}(1;q).
\end{split}
\end{displaymath}
Set
\[
M_5(u;q):=\frac{(u^5;q^5)_\infty}{(u;q)_\infty J(u;q)^2},
\qquad
K_{5,1}(u;q):=\frac{M_5(u;q)}{u}\mathcal G_{5,1}(u;q),
\]
and
\[
S_{5,1}(u;q):=(1-u)R_{5,1}(u;q)-5u^4\frac{(q^5;q^5)_\infty}{(q;q)_\infty}.
\]
Then Corollary~\ref{cor:torsion-slice} gives
\[
K_{5,1}(qu;q)-K_{5,1}(u;q)
=
\frac{M_5(u;q)}{u(1-u^5)}S_{5,1}(u;q),
\]
and the finite partial-fraction expansion
\[
\frac{S_{5,1}(u;q)}{u(1-u^5)}
=
-\frac{X_0^{(2)}(1;q)}{u}
+
\sum_{r=0}^4\frac{D_{r,1}(q)}{1-\omega^r u},
\qquad
D_{r,1}(q)=\frac{\omega^r}{5}S_{5,1}(\omega^{-r};q).
\]
Consequently, we have
\[
K_{5,1}(u;q)
=
X_0^{(2)}(1;q)\Psi_5(u;q)-\sum_{r=0}^4D_{r,1}(q)\Omega_{5,r}(u;q),
\]
where
\[
\Psi_5(u;q):=\sum_{n\geq0}\frac{q^{-n}M_5(q^nu;q)}{u},
\qquad
\Omega_{5,r}(u;q):=\sum_{n\geq0}\frac{M_5(q^nu;q)}{1-\omega^rq^nu}.
\]
Here the defect polynomial is cubic, and the correction is supported on the five fifth-torsion orbits.

For completeness, we indicate the short derivation of the formula for $R_{5,1}(u;q).$ The recurrence gives
\[
X_{j-2}+X_{j-1}+X_j+X_{j+1}+X_{j+2}-q^{j-1}X_j=5\frac{(q^5;q^5)_\infty}{(q;q)_\infty}.
\]
Taking \(j=1\) and \(j=0\) gives
\[
X_{-1}=5\frac{(q^5;q^5)_\infty}{(q;q)_\infty}-X_0-X_2-X_3,
\qquad
X_{-2}=q^{-1}X_0-X_1+X_3.
\]
We then substitute these two boundary values into the general boundary term expression used in the proof of Theorem~\ref{thm:general-root:iplusii}.
%main theorems of Section~\ref{sec:rootofunitynot1}.

The explicit Jacobi carrier in this level is
\[
\mathcal J_{2,5}(\tau,w)
=
\eta(\tau)^{-6}\frac{\vartheta_1(\tau,w)^9}{\vartheta_1(5\tau,5w)}
=
q^{1/4}(q;q)_\infty^{-6}u^{-2}\frac{J(u;q)^9}{J(u^5;q^5)}.
\]
It is a meromorphic Jacobi form of weight \(1\) and index \(2\) on \(\Gamma(600)\), with simple poles on the nonzero fifth-torsion orbits. After the Appell--Lerch correction above and the local polar subtraction, the holomorphic finite part has the theta decomposition
\[
\sum_{\ell\, (\mathrm{mod}\,4)}h_{\ell,1}(\tau)\vartheta_{2,\ell}(\tau,w).
\]

\subsection{The other fifth roots of unity}

Let \(x\neq1\) be a fifth root of unity. These specializations are homogeneous torsion slices: the inhomogeneous term in the recurrence vanishes, but the residual \(x^2\)-twist remains. In this case
\[
\mathcal G_{5,x}(u;q):=x^{-2}\mathcal F_{5,x}(xu;q),
\]
and the defect polynomial is
\begin{displaymath}
\begin{split}
R_{5,x}(u;q)
=\bigl(q^{-1}&x^{-2}u^2-u^3-u^2-u-1\bigr)X_0^{(2)}(x;q) \\
&+\bigl(x^{-1}u^3-xu^3-xu^2-xu\bigr)X_1^{(2)}(x;q)
-x^2(u^3+u^2)X_2^{(2)}(x;q)-x^3u^3X_3^{(2)}(x;q).
\end{split}
\end{displaymath}
Corollary~\ref{cor:torsion-slice} gives
\[
K_{5,x}(qu;q)-x^2K_{5,x}(u;q)
=
\frac{M_5(u;q)}{u(1-u^5)}(1-u)R_{5,x}(u;q),
\]
where \(K_{5,x}(u;q):=M_5(u;q)\mathcal G_{5,x}(u;q)/u\). Moreover
\[
\frac{(1-u)R_{5,x}(u;q)}{u(1-u^5)}
=
-\frac{X_0^{(2)}(x;q)}{u}
+
\sum_{r=0}^4\frac{D_{r,x}(q)}{1-\omega^ru},
\qquad
D_{r,x}(q)=\frac{\omega^r}{5}(1-\omega^{-r})R_{5,x}(\omega^{-r};q).
\]
Thus, we have
\[
K_{5,x}(u;q)=X_0^{(2)}(x;q)\Psi_5^{[x]}(u;q)-\sum_{r=0}^4D_{r,x}(q)\Omega_{5,r}^{[x]}(u;q),
\]
with
\[
\Psi_5^{[x]}(u;q):=\sum_{n\geq0}x^{-2n-2}\frac{q^{-n}M_5(q^nu;q)}{u},
\qquad
\Omega_{5,r}^{[x]}(u;q):=\sum_{n\geq0}x^{-2n-2}\frac{M_5(q^nu;q)}{1-\omega^rq^nu}.
\]
The same five torsion orbits support the possible poles, and the nonzero fifth-torsion divisor is again carried by \(\mathcal J_{2,5}\). After translating away the constant twist and subtracting local Appell--Lerch principal parts, the finite part again decomposes into the four index--\(2\) theta functions \(\vartheta_{2,\ell}\).

\appendix

\section{Combinatorial interpretation of the higher Dyson series}\label{app:combinatorics}

The main body of the paper uses the analytic family \(X_j^{(m)}(x;q)\).
This appendix records the combinatorial source of the higher Dyson series
and proves the root-of-unity collapse used in the introduction.

We begin by recalling
\[
\mathcal{R}_m(x_1,\dots,x_m;q):=\sum_{n\ge 0}\frac{q^{n^2}}{\prod_{j=1}^m (x_jq,q/x_j;q)_n},
\]
and its one-parameter specialization
\[
\mathcal{R}_m(z;q):=\mathcal{R}_m(z,z^2,\dots,z^m;q)=\sum_{n\ge 0}\frac{q^{n^2}}{\prod_{j=1}^m (z^jq,z^{-j}q;q)_n}.
\]
For $m=1$ this is Dyson's rank generating function.

We now define uncoupled $m$-Dyson symbols, the combinatorial statistic whose generating series is given by $\mathcal{R}_m(z;q)$.

\begin{definition}
An \emph{uncoupled $m$-Dyson symbol} is given by data
\[
\Sigma=(n;\alpha_1,\beta_1,\dots,\alpha_m,\beta_m),
\]
where $n\ge 0$ and, for each $j$, the partitions $\alpha_j$ and $\beta_j$ have largest part at most $n$. We define the \emph{size} and \emph{full rank of $\Sigma$} by
\[
|\Sigma|:=n^2+\sum_{j=1}^m\bigl(|\alpha_j|+|\beta_j|\bigr),
\]
\[
\operatorname{FR}_m(\Sigma):=\sum_{j=1}^m j\bigl(\ell(\alpha_j)-\ell(\beta_j)\bigr).
\]
Here, $\ell(\lambda)$ denotes the length of the partition $\lambda$, i.e., the number of its parts.
\end{definition}

\begin{example}
We construct an \emph{uncoupled $2$-Dyson symbol}.
Choose $n=3$. We choose
\[
\alpha_1 =(3,1),\, \beta_1=(2,2),\, \alpha_2=(1),\,\beta_2=\emptyset.
\]
This yields
\[
|\alpha_1| = 4, \, \ell(\alpha_1) = 2, \quad |\beta_1| = 4, \, \ell(\beta_1) = 2, \quad |\alpha_2| = 1, \, \ell(\alpha_2) = 1, \quad |\beta_2| = 0, \, \ell(\beta_2) = 0.
\]
The uncoupled $2$-Dyson symbol is represented by
\[
\Sigma = \bigl(3; (3,1), (2,2), (1), \emptyset\bigr).
\]
Moreover,
\begin{align*}
|\Sigma| &:= n^2 + \sum_{j=1}^2 \bigl(|\alpha_j| + |\beta_j|\bigr) =18, \quad \operatorname{FR}_2(\Sigma) := \sum_{j=1}^2 j\bigl(\ell(\alpha_j) - \ell(\beta_j)\bigr) =2.
\end{align*}
\end{example}

We now show that $\mathcal{R}_m(z;q)$ is the generating series of uncoupled $m$-Dyson symbols.
\begin{theorem}\label{thm:combinatorialgenseries}
We have
\[
\mathcal{R}_m(z;q)=\sum_{\Sigma} z^{\operatorname{FR}_m(\Sigma)}q^{|\Sigma|},
\]
where the sum runs over all uncoupled $m$-Dyson symbols. Equivalently,
\[
\mathcal{R}_m(z;q)=\sum_{N\ge 0}\sum_{r\in\ZZ} N_m(r,N)z^rq^N,
\]
where $N_m(r,N)$ counts uncoupled $m$-Dyson symbols of size $N$ and full rank $r$. In particular,
\[
N_m(r,N)=N_m(-r,N).
\]
\end{theorem}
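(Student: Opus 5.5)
The plan is to expand each factor of the summand of $\mathcal{R}_m(z;q)$ as a generating function for partitions with bounded largest part, and then read off the statistics. The basic input is the standard identity
\[
\frac{1}{(aq;q)_n}=\prod_{k=1}^{n}\frac{1}{1-aq^k}=\sum_{\lambda:\ \lambda_1\le n} a^{\ell(\lambda)}q^{|\lambda|},
\]
valid as a formal power series in $q$, and absolutely convergent for $|q|<1$ and $|a|$ bounded. It comes from expanding each geometric factor $\sum_{c_k\ge0}a^{c_k}q^{kc_k}$: the multiplicity $c_k$ of the part $k$ contributes $c_k$ to the length and $kc_k$ to the size.

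Fix $n\ge0$. I will apply this identity $2m$ times, with $a=z^j$ and $a=z^{-j}$ for $1\le j\le m$. This writes
\[
\frac{q^{n^2}}{\prod_{j=1}^m (z^jq,z^{-j}q;q)_n}
\]
as a sum over tuples $(\alpha_1,\beta_1,\dots,\alpha_m,\beta_m)$ of partitions with largest parts at most $n$. The monomial attached to a tuple is
\[
q^{n^2+\sum_j(|\alpha_j|+|\beta_j|)}\,z^{\sum_j j(\ell(\alpha_j)-\ell(\beta_j))}=z^{\operatorname{FR}_m(\Sigma)}q^{|\Sigma|},
\qquad \Sigma=(n;\alpha_1,\beta_1,\dots,\alpha_m,\beta_m).
\]
Summing over $n\ge0$ puts the uncoupled $m$-Dyson symbols in bijection with the pairs (value of $n$, tuple of partitions). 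This gives the first identity.

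The only point that needs care is the convergence or formal interchange of summations. I would treat everything as formal power series in $q$ with coefficients in $\ZZ[z,z^{-1}]$. Since $|\Sigma|\ge n^2$, only $n\le\sqrt N$ contributes to the coefficient of $q^N$. For each such $n$, only finitely many partition tuples have total size at most $N$. So each coefficient $N_m(r,N)$ is a finite count, and the formal rearrangement is legitimate. For the analytic statement with $|q|<1$ and $z$ on the unit circle (or in a compact annulus avoiding the poles), absolute convergence follows by the same expansion with $|z|^{\pm j}$ in place of $z^{\pm j}$, at least for $|z|$ close to $1$.

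For the symmetry $N_m(r,N)=N_m(-r,N)$, I would use the involution $\iota$ that swaps $\alpha_j\leftrightarrow\beta_j$ for every $j$. It preserves $n$, preserves the constraint on largest parts, and preserves $|\Sigma|$, and it sends $\operatorname{FR}_m(\Sigma)$ to $-\operatorname{FR}_m(\Sigma)$. So $\iota$ is a size-preserving bijection between symbols of full rank $r$ and symbols of full rank $-r$. Alternatively, the symmetry is immediate from the invariance of the product under $z\mapsto z^{-1}$.

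I do not expect a real obstacle. The main thing to get right is the bookkeeping of the exponent of $z$: the factor with $a=z^{-j}$ must contribute $-j\,\ell(\beta_j)$, so that the weights $1,\dots,m$ match the definition of $\operatorname{FR}_m$.
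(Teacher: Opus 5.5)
Your proposal is correct and follows essentially the same route as the paper's proof. Both expand each factor $1/(z^{\pm j}q;q)_n$ as a generating function over partitions with largest part at most $n$, multiply these expansions, sum over $n\ge 0$, and obtain $N_m(r,N)=N_m(-r,N)$ from the involution swapping $\alpha_j\leftrightarrow\beta_j$. Your justification of the formal rearrangement (only $n\le\sqrt N$ contributes to the coefficient of $q^N$ because $|\Sigma|\ge n^2$) is a point the paper leaves implicit.
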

\begin{proof}
Fix $n\ge 0$. For each $j$,
\[
\frac{1}{(z^jq;q)_n}=\sum_{\alpha_j} z^{j\ell(\alpha_j)}q^{|\alpha_j|},
\qquad
\frac{1}{(z^{-j}q;q)_n}=\sum_{\beta_j} z^{-j\ell(\beta_j)}q^{|\beta_j|},
\]
where $\alpha_j$ and $\beta_j$ run over partitions whose largest parts are at most $n$. Multiplying these identities for $j=1,\dots,m$ and then multiplying by $q^{n^2}$ yields
\[
\frac{q^{n^2}}{\prod_{j=1}^m (z^jq,z^{-j}q;q)_n}
=
\sum_{\alpha_1,\beta_1,\dots,\alpha_m,\beta_m}
 z^{\sum_{j=1}^m j(\ell(\alpha_j)-\ell(\beta_j))}
 q^{n^2+\sum_{j=1}^m(|\alpha_j|+|\beta_j|)}.
\]
Summing over $n\ge 0$ gives the result.

The involution
\[
(n;\alpha_1,\beta_1,\dots,\alpha_m,\beta_m)
\longmapsto
(n;\beta_1,\alpha_1,\dots,\beta_m,\alpha_m)
\]
preserves the size and negates the full rank. This yields for every $m\ge 1$, every $r\in\ZZ$, and every $N\ge 0$,
\[
N_m(r,N)=N_m(-r,N).
\]
\end{proof}

For an indeterminate $x$, we let $\Theta(x;q):=(xq,q/x;q)_\infty$.
For $s\in\ZZ_{\ge 0}$ and $n\ge 0$, we define
\[
A_s(x;n):=(xq^{n+1+s},q^{n+1}/x;q)_\infty.
\]
If $\mathbf s=(s_1,\dots,s_m)\in\ZZ_{\ge 0}^m$, let
$
|\mathbf s|:=s_1+\cdots+s_m,
$
and set
\[
\FF_m^{\mathbf s}(x_1,\dots,x_m;q):=\sum_{n\ge 0} q^{n^2+|\mathbf s|n}\prod_{j=1}^m A_{s_j}(x_j;n).
\]
Furthermore, we let \(e_i\) denote the \(i\)-th standard basis vector of \(\mathbb Z^m\).

\begin{theorem}
  We have
\[
\Bigl(\prod_{j=1}^m \Theta(x_j;q)\Bigr)\mathcal{R}_m(x_1,\dots,x_m;q)=\FF_m^{\mathbf 0}(x_1,\dots,x_m;q),
\]
and for $1\le i\le m$ and every $\mathbf s\in\ZZ_{\ge 0}^m$,
\[
\begin{aligned}
\FF_m^{\mathbf s}(x_1,\dots,qx_i,\dots,x_m;q)&-\FF_m^{\mathbf s}(x_1,\dots,x_i,\dots,x_m;q) \\
&=\bigl(q^{s_i+1}x_i-x_i^{-1}\bigr)\FF_m^{\mathbf s+\mathbf e_i}(x_1,\dots,x_m;q).
\end{aligned}
\]
\end{theorem}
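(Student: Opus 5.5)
Both identities can be checked termwise in $n$, working with the summands of $\mathcal{R}_m$ and of $\FF_m^{\mathbf s}$ separately. The key idea is that each factor $A_s(x;n)$ is a ``tail'' of the theta product $\Theta(x;q)$. Throughout, I treat the $x_j$ as generic nonzero complex numbers with $|q|<1$, avoiding the points $q^{\ZZ}$ where finite products vanish, or else as formal variables. All series then converge absolutely, because the factors $A_s$ are bounded and $q^{n^2}$ decays quadratically, so termwise manipulation is justified.

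For the first identity, I use the elementary splitting $(a;q)_\infty=(a;q)_n\,(aq^n;q)_\infty$ for each $j$, applied with $a=x_jq$ and with $a=q/x_j$. This gives
\[
\frac{\Theta(x_j;q)}{(x_jq,q/x_j;q)_n}=(x_jq^{n+1};q)_\infty\,(q^{n+1}/x_j;q)_\infty=A_0(x_j;n).
\]
I then multiply over $j=1,\dots,m$, multiply by $q^{n^2}$, and sum over $n\ge0$. The result is exactly $\FF_m^{\mathbf 0}$, since $|\mathbf 0|=0$.

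For the shift identity, only the $i$-th factor $A_{s_i}(x_i;n)$ depends on $x_i$, so it suffices to compute the single-factor difference. Write $s=s_i$ and $x=x_i$. Substituting $x\mapsto qx$ gives
\[
A_s(qx;n)=(xq^{n+2+s};q)_\infty\,(q^{n}/x;q)_\infty .
\]
I will factor out the common product $(xq^{n+2+s};q)_\infty(q^{n+1}/x;q)_\infty$, which is precisely $A_{s+1}(x;n)$. What remains is
\[
A_s(qx;n)-A_s(x;n)=A_{s+1}(x;n)\bigl[(1-q^n/x)-(1-xq^{n+1+s})\bigr]=q^n\bigl(q^{s+1}x-x^{-1}\bigr)A_{s+1}(x;n).
\]
The extra factor $q^n$ turns $q^{n^2+|\mathbf s|n}$ into $q^{n^2+|\mathbf s+\mathbf e_i|n}$. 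The factor $(q^{s_i+1}x_i-x_i^{-1})$ is independent of $n$, so it comes out of the sum. The other factors $A_{s_j}(x_j;n)$ with $j\neq i$ are unchanged. Summing over $n$ therefore yields the stated recurrence.

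There is no serious obstacle here. The only points needing care are the exponent bookkeeping and the observation that both $A_s(qx;n)$ and $A_s(x;n)$ contain the same tail $A_{s+1}(x;n)$ after peeling off one factor each. The convergence remarks above justify splitting the sums.
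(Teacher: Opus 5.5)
Your proposal is correct and matches the paper's proof essentially step for step. The first identity comes from $\Theta(x_j;q)/(x_jq,q/x_j;q)_n=A_0(x_j;n)$. The shift identity comes from peeling one factor off each of $A_s(qx;n)$ and $A_s(x;n)$ to expose the common tail $A_{s+1}(x;n)$, then absorbing the resulting $q^n$ into $q^{n^2+|\mathbf s|n}$; your convergence remarks are a harmless extra the paper leaves implicit.
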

\begin{proof}
      For each fixed $j$ and each $n\ge 0$, we have
\[
\frac{\Theta(x_j;q)}{(x_jq,q/x_j;q)_n}
=
\frac{(x_jq,q/x_j;q)_\infty}{(x_jq,q/x_j;q)_n}
=(x_jq^{n+1},q^{n+1}/x_j;q)_\infty
=A_0(x_j;n).
\]
Multiplying over $j=1,\dots,m$ and summing over $n\ge 0$ gives for every $m\ge 1$,
\[
\Bigl(\prod_{j=1}^m \Theta(x_j;q)\Bigr)\mathcal{R}_m(x_1,\dots,x_m;q)=\FF_m^{\mathbf 0}(x_1,\dots,x_m;q).
\]
Moreover, we have
\[
A_s(qx;n)=(1-q^n/x)(xq^{n+2+s},q^{n+1}/x;q)_\infty,
\]
\[
A_s(x;n)=(1-xq^{n+1+s})(xq^{n+2+s},q^{n+1}/x;q)_\infty.
\]
Subtracting gives
\[
A_s(qx;n)-A_s(x;n)=q^n\bigl(q^{s+1}x-x^{-1}\bigr)A_{s+1}(x;n).
\]
We apply this to the $i$th factor in the definition of $\FF_m^{\mathbf s}$ and obtain
\begin{align*}
\FF_m^{\mathbf s}&(x_1,\dots,qx_i,\dots,x_m;q)-\FF_m^{\mathbf s}(x_1,\dots,x_i,\dots,x_m;q)
\\
&=\sum_{n\ge 0} q^{n^2+|\mathbf s|n}
\Bigl(\prod_{j\neq i}A_{s_j}(x_j;n)\Bigr)
q^n\bigl(q^{s_i+1}x_i-x_i^{-1}\bigr)A_{s_i+1}(x_i;n).
\end{align*}
We factor out $q^{s_i+1}x_i-x_i^{-1}$. Since $|\mathbf s+\mathbf e_i|=|\mathbf s|+1$, the remaining sum is precisely
\[
\FF_m^{\mathbf s+\mathbf e_i}(x_1,\dots,x_m;q),
\]
which implies the result.
\end{proof}

Before stating the next theorem, we recall our notation for Gaussian coefficients. For integers
\(A\) and \(B\), define
\[
\begin{bmatrix} A \\ B \end{bmatrix}_q
:=
\begin{cases}
\dfrac{(q;q)_A}{(q;q)_B(q;q)_{A-B}}, & 0 \leq B \leq A,\\[1.2em]
0, & \text{otherwise}.
\end{cases}
\]
Thus the finite \(q\)-binomial theorem gives, for \(n\geq 1\),
\[
\frac{1}{(xq;q)_n}
=
\sum_{a\geq 0}
\begin{bmatrix} n+a-1 \\ a \end{bmatrix}_q
x^a q^a.
\]
This is the convention used in Theorem~\ref{thm:gaussian-coefficients}.

\begin{theorem}\label{thm:gaussian-coefficients}
    For each $n\ge 0$, define coefficients $C_n(k;q)$ by
\[
\frac{1}{(xq,q/x;q)_n}=\sum_{k\in\ZZ} C_n(k;q)x^k.
\]
For $n=0$ one has $C_0(0;q)=1$ and $C_0(k;q)=0$ for $k\neq 0$. For every $n\ge 1$ and every $k\ge 0$, and for $|q|<|x|<|q|^{-1}$,
\[
C_n(k;q)=\sum_{v\ge 0}
\qbinom{n+k+v-1}{k+v}
\qbinom{n+v-1}{v}
q^{k+2v},
\qquad
C_n(-k;q)=C_n(k;q),
\]
and
\[
\mathcal{R}_m(z;q)
=
\sum_{n\ge 0} q^{n^2}
\sum_{\mathbf k\in\ZZ^m}
\Bigl(\prod_{j=1}^m C_n(k_j;q)\Bigr)
 z^{k_1+2k_2+\cdots+mk_m}.
\]
\end{theorem}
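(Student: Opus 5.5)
The statement to prove is Theorem~\ref{thm:gaussian-coefficients}. The plan is to expand each factor of $1/(xq,q/x;q)_n$ by the finite $q$-binomial theorem, multiply the two expansions, and read off the coefficient of $x^k$. The formula for $\mathcal R_m(z;q)$ then follows by substituting $x=z^j$ and multiplying over $j$.

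The case $n=0$ is trivial, since the empty product equals $1$. For $n\geq 1$ and $|q|<|x|<|q|^{-1}$, both $|xq|<1$ and $|q/x|<1$ hold. The quoted identity therefore gives two absolutely convergent expansions:
\[
\frac1{(xq;q)_n}=\sum_{a\ge0}\qbinom{n+a-1}{a}x^aq^a,
\qquad
\frac1{(q/x;q)_n}=\sum_{b\ge0}\qbinom{n+b-1}{b}x^{-b}q^b .
\]
Their product is an absolutely convergent double series, so we may regroup by the exponent $k=a-b$.

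For $k\ge0$, I parametrize by $b=v\ge0$ and $a=k+v$. The $q$-power is then $q^{a+b}=q^{k+2v}$, and collecting terms gives exactly
\[
C_n(k;q)=\sum_{v\ge0}\qbinom{n+k+v-1}{k+v}\qbinom{n+v-1}{v}q^{k+2v}.
\]
For the symmetry $C_n(-k;q)=C_n(k;q)$, note that the product $(xq,q/x;q)_n$ is invariant under $x\mapsto x^{-1}$. The annulus of convergence is also invariant under this map. By uniqueness of Laurent coefficients on the annulus, $C_n(-k)=C_n(k)$. Equivalently, one can set $a=v$, $b=k+v$ and check that the same sum results.

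For the $\mathcal R_m$ formula, I apply the expansion with $x=z^j$ for each $j=1,\dots,m$. This requires $|q|<|z|^j<|q|^{-1}$ for all $j\le m$, which holds when $|q|^{1/m}<|z|<|q|^{-1/m}$. The identity can then be read either as an equality of analytic functions there or as an identity of formal series in $q$ with Laurent polynomial coefficients in $z$. Indeed, each $C_n(k;q)=O(q^{|k|})$, so for fixed $n$ only finitely many $\mathbf k$ contribute to each power of $q$. Multiplying the $m$ expansions gives the sum over $\mathbf k\in\ZZ^m$ weighted by $z^{\sum_j jk_j}$. Multiplying by $q^{n^2}$ and summing over $n$ yields the stated formula.

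The only point needing care is the justification of interchanging sums, both in the product of the two series and in the $m$-fold product. The step I would check most carefully is the convergence region, or equivalently the formal $q$-adic convergence. I would handle it through the bound $C_n(k;q)\in q^{|k|}\ZZ[[q]]$, which makes every rearrangement legitimate coefficientwise in $q$.
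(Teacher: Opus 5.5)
Your proposal is correct and follows the paper's proof essentially step for step. You expand both factors by the finite $q$-binomial theorem, read off the $x^k$ coefficient via $a=k+v$, $b=v$, and get the symmetry by swapping $a$ and $b$ (or from invariance under $x\mapsto x^{-1}$); then you substitute $x=z^j$, multiply over $j$, and sum over $n$. Your added convergence bookkeeping goes beyond what the paper writes: it supplies the annulus $|q|^{1/m}<|z|<|q|^{-1/m}$ for the $\mathcal R_m$ identity and uses the bound $C_n(k;q)\in q^{|k|}\ZZ[[q]]$ to justify the formal $q$-adic rearrangement, both of which the paper leaves implicit.
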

\begin{proof}
    For $n=0$ the definition gives
$
\frac{1}{(xq,q/x;q)_0}=1,
$
so $C_0(0;q)=1$ and $C_0(k;q)=0$ for $k\neq 0$. Now assume $n\ge 1$. By the finite $q$-binomial theorem,
\[
\frac{1}{(xq;q)_n}=\sum_{a\ge 0}\qbinom{n+a-1}{a}x^a q^a,
\qquad
\frac{1}{(q/x;q)_n}=\sum_{b\ge 0}\qbinom{n+b-1}{b}x^{-b} q^b.
\]
Multiplying gives
\[
\frac{1}{(xq,q/x;q)_n}
=
\sum_{a,b\ge 0}
\qbinom{n+a-1}{a}
\qbinom{n+b-1}{b}
 x^{a-b} q^{a+b}.
\]
For $k\ge 0$, the coefficient of $x^k$ comes from $a=b+k$. Writing $v:=b$, we obtain
\[
C_n(k;q)=\sum_{v\ge 0}
\qbinom{n+k+v-1}{k+v}
\qbinom{n+v-1}{v}
q^{k+2v}.
\]
The symmetry $C_n(-k;q)=C_n(k;q)$ is immediate by interchanging $a$ and $b$.
Now, for each fixed $n$ and $j$,
\[
\frac{1}{(z^jq,z^{-j}q;q)_n}=\sum_{k_j\in\ZZ} C_n(k_j;q)z^{jk_j}.
\]
Multiplying over $j=1,\dots,m$, then multiplying by $q^{n^2}$, then summing over $n\ge 0$, gives the desired expansion.
\end{proof}

For $N\ge 0$ and $a\in\ZZ/d\ZZ$, let $N_m(a,d;N)$ denote the number of uncoupled $m$-Dyson symbols of size $N$ whose full rank is congruent to $a\pmod d$.

\begin{theorem}\label{thm:appendixcongruence}
Fix $m\ge 1$.
If $d=2m+1$ is prime, then for every $N\ge 0$ one has
\[
N_m(1,d;N)=N_m(2,d;N)=\cdots=N_m(d-1,d;N).
\]
Equivalently, there exist integers $A_m(N)$ and $B_m(N)$ such that
\[
N_m(0,d;N)=A_m(N),
\qquad
N_m(a,d;N)=B_m(N)\quad (a=1,\dots,d-1),
\]
and hence
\[
[q^N]\mathcal{R}_m(\zeta_d;q)=A_m(N)-B_m(N),
\]
where $[q^N]$ is the operator that extracts the $N$-th coefficient.
\end{theorem}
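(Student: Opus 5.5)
We will use a group action on uncoupled $m$-Dyson symbols that fixes the size and shifts the full rank by a unit modulo $d$. Equivalently, and more simply, we argue at the level of generating functions. Let $\zeta_d$ be a primitive $d$-th root of unity. By Theorem~\ref{thm:combinatorialgenseries}, for each $N$,
\[
[q^N]\mathcal R_m(\zeta_d^k;q)=\sum_{a\in\ZZ/d\ZZ}N_m(a,d;N)\zeta_d^{ka}\qquad(k\in\ZZ).
\]
The plan is to show that this quantity is the same for every $k$ with $d\nmid k$. Once that is established, the finite Fourier inversion on $\ZZ/d\ZZ$ finishes the argument. The vector $(N_m(a,d;N))_a$ has discrete Fourier transform that is constant on the nonzero frequencies. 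Hence the vector itself has the form $A\delta_{a,0}+B$, which is exactly the claimed equality $N_m(1,d;N)=\cdots=N_m(d-1,d;N)$. The formula $[q^N]\mathcal R_m(\zeta_d;q)=A_m(N)+B_m(N)\sum_{a=1}^{d-1}\zeta_d^a=A_m(N)-B_m(N)$ then follows immediately.

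The key step is therefore the Galois-type invariance of $\mathcal R_m(\zeta_d^k;q)$ for $d\nmid k$. We will obtain it from the root-of-unity collapse of the denominator. Since $d$ is prime, $\zeta_d^k$ is again a primitive $d$-th root of unity whenever $d\nmid k$. As $j$ runs over $\pm1,\dots,\pm m$, the exponents $jk$ run over all nonzero residues modulo $d=2m+1$. Thus, writing $\xi=\zeta_d^k$,
\[
\prod_{j=1}^m(\xi^jq,\xi^{-j}q;q)_n=\prod_{s=1}^{d-1}(\zeta_d^{s}q;q)_n=\frac{(q^d;q^d)_n}{(q;q)_n},
\]
using $\prod_{s=0}^{d-1}(1-\zeta_d^s y)=1-y^d$ with $y=q^{i}$. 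Consequently
\[
\mathcal R_m(\zeta_d^k;q)=\sum_{n\ge0}q^{n^2}\frac{(q;q)_n}{(q^d;q^d)_n}
\]
for all such $k$, and this is independent of $k$. This is exactly the collapse displayed in the introduction. The only point needing care is that the set $\{jk \bmod d: 1\le |j|\le m\}$ equals $(\ZZ/d\ZZ)^\times$. Primality of $d$ gives this directly, because multiplication by $k$ permutes the nonzero residues and $\{\pm1,\dots,\pm m\}$ is a complete set of nonzero residues.

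The expected obstacle is merely justifying that the coefficient extraction is legitimate. The counts $N_m(r,N)$ are finite for each fixed $N$: $n\le\sqrt N$, and there are finitely many partitions of bounded size. So $\mathcal R_m(z;q)$ is, coefficientwise in $q$, a Laurent polynomial in $z$, and specializing $z=\zeta_d^k$ commutes with taking $[q^N]$. The Fourier inversion step is the standard identity $\sum_{k=0}^{d-1}\zeta_d^{k(b-a)}=d\,\delta_{a,b}$. It gives
\[
dN_m(b,d;N)=c_0+\sum_{k=1}^{d-1}c\,\zeta_d^{-kb},
\]
where $c_0$ is the coefficient at $k=0$ and $c$ is the common value at nonzero $k$. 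The right-hand side equals $c_0-c$ for every $b\ne0$, which completes the argument.
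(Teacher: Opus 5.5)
Your proof is correct. It uses the same key input as the paper, namely that $\mathcal R_m(\zeta_d^k;q)$ is independent of $k$ for $d\nmid k$, but it finishes differently.

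\textbf{The paper's finish.} The paper sets $F_N(X)=\sum_{a=0}^{d-1}N_m(a,d;N)X^a$. From the invariance it concludes that $F_N(\zeta_d)$ is fixed by $\mathrm{Gal}(\QQ(\zeta_d)/\QQ)$, hence is an integer $\delta_N$. It then uses that $\Phi_d(X)=1+X+\cdots+X^{d-1}$ is the minimal polynomial of $\zeta_d$ (this is where it needs $d$ prime) to get $F_N(X)-\delta_N=c_N\Phi_d(X)$ by a degree count.

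\textbf{Your finish.} You use all $d$ evaluations $F_N(\zeta_d^k)$ together with orthogonality of characters on $\ZZ/d\ZZ$. This is pure linear algebra over $\CC$. It needs neither the integrality of the specialized coefficient nor the irreducibility of $\Phi_d$. Primality enters for you only to make every $\zeta_d^k$ with $d\nmid k$ primitive.

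\textbf{What each route buys.} You prove the invariance itself: multiplication by $k$ permutes $\{\pm1,\dots,\pm m\}$ modulo $d$, and $\prod_{s=0}^{d-1}(1-\zeta_d^s y)=1-y^d$. The paper only asserts this invariance, relying on the collapse displayed in the introduction. You also justify taking $[q^N]$ after specializing, which the paper leaves implicit. Your route extends directly to composite $d$. The same permutation argument shows that $\mathcal R_m(\zeta_d^{k};q)$ depends only on $\gcd(k,d)$, and Fourier inversion then gives that $N_m(a,d;N)$ depends only on $\gcd(a,d)$. The paper's route is shorter once the Galois invariance is granted.

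\textbf{One correction.} Delete your opening sentence about a group action on symbols. You never construct one, and the argument does not use it. Read literally as adding a unit to the rank residue, it would force all $d$ residue classes to have the same size. That already fails at $N=0$, where the only symbol has full rank $0$.
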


\begin{proof}
Fix $N \ge 0$ and define $F_N(X) := \sum_{a=0}^{d-1} N_m(a,d;N) X^a \in \mathbb{Z}[X]$. By definition, $[q^N]\mathcal{R}_m(\zeta_d^r;q) = F_N(\zeta_d^r)$. Since $\mathcal{R}_m(\zeta_d^r;q)$ is invariant for all $r \in (\mathbb{Z}/d\mathbb{Z})^\times$, the value $F_N(\zeta_d)$ is fixed by the Galois group $\text{Gal}(\mathbb{Q}(\zeta_d)/\mathbb{Q})$. Thus, $F_N(\zeta_d) = \delta_N$ for some $\delta_N \in \mathbb{Z}$.

The polynomial $G_N(X) := F_N(X) - \delta_N$ vanishes at all primitive $d$-th roots of unity, and is therefore divisible by the cyclotomic polynomial $\Phi_d(X)$. Since $d$ is prime, $\Phi_d(X) = \sum_{a=0}^{d-1} X^a$. Because $\deg(G_N) \le d-1$ and $\deg(\Phi_d) = d-1$, there must exist a constant $c_N \in \mathbb{Z}$ such that
\[
F_N(X) - \delta_N = c_N \Phi_d(X) = c_N \sum_{a=0}^{d-1} X^a.
\]
Comparing coefficients of $X^a$ yields
\begin{align*}
N_m(0,d;N) - \delta_N &= c_N, \\
N_m(a,d;N) &= c_N \quad (1 \le a \le d-1).
\end{align*}
Setting $B_m(N) := c_N$ and $A_m(N) := \delta_N + c_N$, we obtain $N_m(a,d;N) = B_m(N)$ for $a \ne 0$ and $N_m(0,d;N) = A_m(N)$. Finally, the specialized coefficient is given by
\[
[q^N]\mathcal{R}_m(\zeta_d;q) = \delta_N = A_m(N) - B_m(N),
\]
completing the proof.
\end{proof}

\begin{corollary}\label{cor:recover}
Let
\[
U_m(q):=\RRm_m(1;q)=\sum_{n\ge 0}\frac{q^{n^2}}{(q;q)_n^{2m}}=\sum_{N\ge 0}U_m(N)q^N.
\]
If $d=2m+1$ is prime and
\[
D_m(N):=[q^N]\RRm_m(\zeta_d;q),
\]
then
\[
A_m(N)=\frac{U_m(N)+(d-1)D_m(N)}{d},
\qquad
B_m(N)=\frac{U_m(N)-D_m(N)}{d}.
\]
\end{corollary}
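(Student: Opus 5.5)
The formulas for $A_m(N)$ and $B_m(N)$ follow from two linear equations, read off from Theorem~\ref{thm:appendixcongruence}. I would set up these two relations and solve them.

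The first relation is the total count. Specialize Theorem~\ref{thm:combinatorialgenseries} at $z=1$. Then $\RRm_m(1;q)=\sum_{n\ge0}q^{n^2}/(q;q)_n^{2m}=U_m(q)$, and $U_m(N)$ is the total number of uncoupled $m$-Dyson symbols of size $N$. Partitioning these symbols according to the residue of their full rank modulo $d$ gives
\[
U_m(N)=\sum_{a=0}^{d-1}N_m(a,d;N)=A_m(N)+(d-1)B_m(N),
\]
where the second equality uses Theorem~\ref{thm:appendixcongruence}.

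The second relation is the last assertion of that theorem:
\[
D_m(N)=[q^N]\RRm_m(\zeta_d;q)=A_m(N)-B_m(N).
\]

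Subtracting the second relation from the first gives $U_m(N)-D_m(N)=dB_m(N)$, so
\[
B_m(N)=\frac{U_m(N)-D_m(N)}{d}.
\]
Then
\[
A_m(N)=D_m(N)+B_m(N)=\frac{U_m(N)+(d-1)D_m(N)}{d}.
\]

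The only point to check is that the specialization $z=1$ is legitimate. In each summand the denominators $(q;q)_n^{2m}$ are nonvanishing for $|q|<1$, and the symbol expansion converges as a formal power series in $q$. So the coefficient of $q^N$ in $\RRm_m(1;q)$ is exactly the number of symbols of size $N$.
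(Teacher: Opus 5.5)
Your proposal is correct and matches the paper's proof. The paper also combines the total count $U_m(N)=A_m(N)+(d-1)B_m(N)$ with $D_m(N)=A_m(N)-B_m(N)$ from Theorem~\ref{thm:appendixcongruence} and solves the resulting $2\times 2$ linear system.
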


\begin{proof}
The total number of uncoupled $m$-Dyson symbols of size $N$ is
\[
U_m(N)=A_m(N)+(d-1)B_m(N),
\]
while Theorem~\ref{thm:appendixcongruence} gives
\[
D_m(N)=A_m(N)-B_m(N).
\]
Solving this $2\times 2$ linear system gives the result.
\end{proof}

\section{AxiomProver's Lean formalization}\label{app:axiomprover}

AxiomProver is an AI system for mathematical research via formal proof that is
currently under development.
This appendix describes its role in the discovery and verification of the formulas used in this paper. The higher Dyson systems considered here lie beyond the classical case $m=1$, where Dyson’s rank generating function leads to Ramanujan’s mock theta functions and, through the work of Zwegers and others, to the modern theory of harmonic Maass forms. For $m>1$, the corresponding series are not expected to be mock modular; their analytic structure has been difficult to discern. During the exploratory stage of this project, the authors used AxiomProver as a mathematical aid. Several possible structures were proposed, tested, discarded, and refined before the elliptic correction mechanism developed in this paper emerged. Thus, AI contributed to the process of finding the conceptual framework presented in this paper.

AxiomProver also played a second, more concrete role. The construction in this paper depends on a long chain of delicate identities involving finite recurrences, root-of-unity phases, product normalizations, partial fractions, Appell–Lerch correction kernels, and elliptic transformation laws. In such formulas, a single misplaced sign, exponent, or normalization factor can destroy the entire structure. The authors therefore used AxiomProver to formalize and check the algebraic core of the argument in Lean. This was not immediate. Indeed, it took 26 AxiomProver runs before the key formulas were correctly stated, formalized, and verified. The purpose of this appendix is to explain precisely what was formalized, what was taken as classical analytic input, and how the resulting Lean verification supports the mathematical claims made in the paper.

We isolated exactly those identities, which are new in this work, as a list of Key
Formulas and targeted them for machine verification. We did not attempt to
formalize the paper as a whole. Readers not interested in automated theorem
proving may skip this appendix.

\subsection*{Scope of the formalization}
Each Key Formula is an identity of \emph{formal} objects: an equality in
$\CC(u)$, in a Laurent ring, or, after evaluating the elliptic variable at a
fixed $\tau$, an equality of complex numbers. No analytic limit enters the
statements themselves. The modular and $q$-series inputs that a given identity
requires (the $G_d$ shift, the $M_d$ and $P_d$ multiplicative $q$-shift laws,
the kernel $q$- and $\tau$-shift laws, and the $\vartheta_1$ triple-product
normalization and quasi-periodicities) are introduced as opaque symbols subject
only to the finitely many algebraic relations actually used, so that every Key
Formula becomes a self-contained algebraic equality. The convergence and
summability facts underlying the kernels are the classical analytic half of the
theory and are supplied as hypotheses rather than proof obligations.

The following Key Formulas were formalized and proved, sorry-free, in Lean;
we indicate where each corresponding result appears in the present paper.
\begin{itemize}\setlength{\itemsep}{2pt}
\item Root-of-unity collapse of the inhomogeneity $C_x=(1-x^d)/(1-x)=0$ for nontrivial $d$-th roots of unity (Proposition~\ref{prop:recurrence}).
\item The fundamental $(2m+1)$-term recurrence (Proposition~\ref{prop:recurrence}).
\item The two negative-index boundary closures used to sum the recurrence (Section~\ref{sec:level5}, and the proof of Theorem~\ref{thm:general-root:iplusii}).
\item The master functional equation producing the defect polynomial $R_{d,x}$, with degree bound $\deg_u R_{d,x}\le d-2$ and constant term $R_{d,x}(0;q)=-X_0^{(m)}(x;q)$ (Theorem~\ref{thm:general-root:iplusii}).
\item The $M_d$-clearing to the twisted defect equation for $K_{d,x}$ (Theorem~\ref{thm:general-root:iii}).
\item The explicit partial-fraction residues $D_{r,x}(q)$ and $E_x(q)$, in both the general case $x^d\ne1$ and the torsion case $x^d=1$ (Theorem~\ref{thm:general-root:iii}).
\item The kernel $q$-shift identities that cancel the defect term by term (Theorem~\ref{thm:general-root:iii}).
\item The $P_d$-shift and the collapse to the index-$m$ elliptic multiplier (Theorem~\ref{thm:intro-corrected-lift}).
\item The Appell-kernel $\tau$-shift laws matching the $\Phi$-defect (Proposition~\ref{prop:appell-constant}, Theorem~\ref{thm:intro-corrected-lift}).
\item The ellipticity of the corrected function $\mathcal H_{d,x}$ (Theorem~\ref{thm:intro-corrected-lift}).
\item The translation $\lambda_x=\alpha/m$ that trivializes the constant twist (equation~\eqref{eq:intro-translation-new}, Theorem~\ref{thm:intro-corrected-lift}).
\item The weight 1 Jacobi carrier $\mathcal J_{m,d}$: covering identity, index-$m$ elliptic law, and $d=2m+1$  (Proposition~\ref{prop:jacobi-carrier}).
\item The first new case $m=2$, $d=5$: explicit defect for $K_{5,1}$, residues $D_{r,1}(q)$, and carrier $\mathcal J_{2,5}$ on $\Gamma(600)$ (Section~\ref{sec:level5}).
\item The combinatorial $2\times2$ inversion for the congruence multiplicities (Corollary~\ref{cor:recover}).
\end{itemize}

The verification was carried out in four batches, which compose into a single
conditional chain. Batch~1 establishes the core $q$-series algebra: the
root-of-unity collapse, the fundamental recurrence, the boundary closures, the
master functional equation producing $R_{d,x}$, and the combinatorial
inversion. Batch~2 clears the defect: it derives the twisted defect equation
for $K_{d,x}$, computes the explicit partial-fraction residues in both the
generic and torsion regimes, and verifies that the correction kernels cancel
the defect. Batch~3 assembles the elliptic structure: the $P_d$-normalization
and index-$m$ multiplier, the Appell-kernel matching, the ellipticity of the
corrected function $\mathcal H_{d,x}$, the untwisting translation, the Jacobi
carrier, and the explicit level-$5$ specialization. Batch~4 supplies the three
normalized kernel $\tau$-shift laws, derived from the series definitions of the
correction kernels $\mathcal B_d^{[x]}$, $\mathcal A_{d,r}^{[x]}$, and
$\mathcal A_{d,\mathrm{ext}}^{[x]}$ together with an assumed summability
hypothesis. They are the designated inputs consumed by Batch~3. Together,
Batch~4 proves that the series definitions yield the kernel shift laws, and
Batch~3 proves that those shift laws yield the elliptic correction structure.

\subsection*{Process}
The formal proofs were developed and verified using Lean 4.28.0 with mathlib 4.28.0. Compatibility with
earlier or later versions of the compiler is not guaranteed, owing to the
evolving nature of the Lean~4 compiler and mathlib. The proofs are
fully sorry-free. They add no axioms and leave no stray \texttt{sorry}. The only
non-proved inputs are the designated opaque-symbol shift laws and the classical
summability hypotheses described above. These are hypotheses of conditional
theorems, not axioms.

This paper was written by the human authors for a mathematical audience. A
research paper is a narrative designed to communicate ideas to people, whereas a
Lean file is written to satisfy a proof-checking kernel; at first glance,
therefore, the formal development does not resemble the narrative presented in
the main text.

\subsection*{Artifacts}

All relevant files are hosted in the repository
\begin{center}
  \url{https://github.com/AxiomMath/HigherDyson}
\end{center}
The input given to AxiomProver consisted of a self-contained natural-language
statement of the Key Formulas and their formalization conventions, together with
a task file for each batch instructing the system to formalize and prove the
designated Key Formulas with fully sorry-free proofs.
These files are located at \texttt{Batch1/Input/}, \texttt{Batch2/Input/}, \texttt{Batch3/Input/}, and \texttt{Batch4/Input/}.

From these inputs,
AxiomProver autonomously produced, for each batch, a \texttt{problem.lean}
formalizing the statements and a \texttt{solution.lean} containing the complete
Lean proofs.
These files are located at \texttt{Batch1/Output/}, \texttt{Batch2/Output/}, \texttt{Batch3/Output/}, and \texttt{Batch4/Output/}.

\section*{Declaration of generative AI and AI-assisted technologies in the manuscript preparation process}
As described in Appendix~\ref{app:axiomprover}, AxiomProver (an AI tool under
development) was used to formalize and formally verify, in Lean, the key new
formulas of this paper. The paper was written by the human authors.

\end{document}